\def\r{\mathrm{r}}
\def\qcb{\mathsf{qcb}}
\newcommand{\douwidehat}[2]{%
  \sbox0{$\m@th#1\widehat{\hphantom{#2}}$}%
  \sbox2{$\m@th#1x$}
  \sbox4{$\m@th#1#2$}
  \dimen0=\ht0
  \advance\dimen0 -.8\ht2
  \dimen2=\dp4
  \rlap{%
    \raisebox{\dimexpr\dimen0-\dimen2}{%
      \scalebox{1}[-1]{\box0}%
    }%
  }%
  {#2}%
}
\title{Computable Bases}
\author{Vasco Brattka\lmcsorcid{0000-0003-4664-2183}}
\address{Faculty of Computer Science, Universität der Bundeswehr München, Germany and
Department of Mathematics and Applied Mathematics, University of Cape Town, South Africa}
\email{Vasco.Brattka@cca-net.de}
\author{Emmanuel Rauzy\lmcsorcid{0009-0005-0395-3311}}
\address{Laboratoire d’Algorithmique, Complexité et Logique, Département d’Informatique, Université Paris-Est Créteil, France}
\email{emmanuel.rauzy@u-pec.fr}
\begin{document} 



\def\AA{{\mathcal A}}
\def\BB{{\mathcal B}}
\def\CC{{\mathcal C}}
\def\DD{{\mathcal D}}
\def\EE{{\mathcal E}}
\def\FF{{\mathcal F}}
\def\GG{{\mathcal G}}
\def\HH{{\mathcal H}}
\def\II{{\mathcal I}}
\def\JJ{{\mathcal J}}
\def\KK{{\mathcal K}}
\def\LL{{\mathcal L}}
\def\MM{{\mathcal M}}
\def\NN{{\mathcal N}}
\def\OO{{\mathcal O}}
\def\PP{{\mathcal P}}
\def\QQ{{\mathcal Q}}
\def\RR{{\mathcal R}}
\def\SS{{\mathcal S}}
\def\TT{{\mathcal T}}
\def\UU{{\mathcal U}}
\def\VV{{\mathcal V}}
\def\WW{{\mathcal W}}
\def\XX{{\mathcal X}}
\def\YY{{\mathcal Y}}
\def\ZZ{{\mathcal Z}}


\def\bA{{\mathbf A}}
\def\bB{{\mathbf B}}
\def\bC{{\mathbf C}}
\def\bD{{\mathbf D}}
\def\bE{{\mathbf E}}
\def\bF{{\mathbf F}}
\def\bG{{\mathbf G}}
\def\bH{{\mathbf H}}
\def\bI{{\mathbf I}}
\def\bJ{{\mathbf J}}
\def\bK{{\mathbf K}}
\def\bL{{\mathbf L}}
\def\bM{{\mathbf M}}
\def\bN{{\mathbf N}}
\def\bO{{\mathbf O}}
\def\bP{{\mathbf P}}
\def\bQ{{\mathbf Q}}
\def\bR{{\mathbf R}}
\def\bS{{\mathbf S}}
\def\bT{{\mathbf T}}
\def\bU{{\mathbf U}}
\def\bV{{\mathbf V}}
\def\bW{{\mathbf W}}
\def\bX{{\mathbf X}}
\def\bY{{\mathbf Y}}
\def\bZ{{\mathbf Z}}


\def\IB{{\mathbb{B}}}
\def\IC{{\mathbb{C}}}
\def\IF{{\mathbb{F}}}
\def\IN{{\mathbb{N}}}
\def\IP{{\mathbb{P}}}
\def\IQ{{\mathbb{Q}}}
\def\IR{{\mathbb{R}}}
\def\IS{{\mathbb{S}}}
\def\IT{{\mathbb{T}}}
\def\IZ{{\mathbb{Z}}}

\def\IIB{{\mathbb{\mathbf B}}}
\def\IIC{{\mathbb{\mathbf C}}}
\def\IIN{{\mathbb{\mathbf N}}}
\def\IIQ{{\mathbb{\mathbf Q}}}
\def\IIR{{\mathbb{\mathbf R}}}
\def\IIZ{{\mathbb{\mathbf Z}}}


\def\ELSE{\quad\mbox{else}\quad}
\def\WITH{\quad\mbox{with}\quad}
\def\FOR{\quad\mbox{for}\quad}
\def\AND{\;\mbox{and}\;}
\def\OR{\;\mbox{or}\;}

\def\To{\longrightarrow}
\def\TO{\Longrightarrow}
\def\In{\subseteq}
\def\sm{\setminus}
\def\Inneq{\In_{\!\!\!\!/}}
\def\dmin{\mathop{\dot{-}}}
\def\splus{\oplus}
\def\SEQ{\triangle}
\def\DIV{\uparrow}
\def\INV{\leftrightarrow}
\def\SET{\Diamond}

\def\kto{\equiv\!\equiv\!>}
\def\kin{\subset\!\subset}
\def\pto{\leadsto}
\def\into{\hookrightarrow}
\def\onto{\to\!\!\!\!\!\to}
\def\prefix{\sqsubseteq}
\def\rel{\leftrightarrow}
\def\mto{\rightrightarrows}

\def\B{{\mathsf{{B}}}}
\def\D{{\mathsf{{D}}}}
\def\G{{\mathsf{{G}}}}
\def\E{{\mathsf{{E}}}}
\def\J{{\mathsf{{J}}}}
\def\K{{\mathsf{{K}}}}
\def\L{{\mathsf{{L}}}}
\def\R{{\mathsf{{R}}}}
\def\T{{\mathsf{{T}}}}
\def\U{{\mathsf{{U}}}}
\def\W{{\mathsf{{W}}}}
\def\Z{{\mathsf{{Z}}}}
\def\w{{\mathsf{{w}}}}
\def\HP{{\mathsf{{H}}}}
\def\C{{\mathsf{{C}}}}
\def\Tot{{\mathsf{{Tot}}}}
\def\Fin{{\mathsf{{Fin}}}}
\def\Cof{{\mathsf{{Cof}}}}
\def\Cor{{\mathsf{{Cor}}}}
\def\Equ{{\mathsf{{Equ}}}}
\def\Com{{\mathsf{{Com}}}}
\def\Inf{{\mathsf{{Inf}}}}

\def\Tr{{\mathrm{Tr}}}
\def\Sierp{{\mathrm Sierpi{\'n}ski}}
\def\psisierp{{\psi^{\mbox{\scriptsize\Sierp}}}}
\def\cl{{\mathrm{{cl}}}}
\def\Haus{{\mathrm{{H}}}}
\def\Ls{{\mathrm{{Ls}}}}
\def\Li{{\mathrm{{Li}}}}

\def\CL{\mathsf{CL}}
\def\ACC{\mathsf{ACC}}
\def\DNC{\mathsf{DNC}}
\def\ATR{\mathsf{ATR}}
\def\LPO{\mathsf{LPO}}
\def\LLPO{\mathsf{LLPO}}
\def\WKL{\mathsf{WKL}}
\def\RCA{\mathsf{RCA}}
\def\ACA{\mathsf{ACA}}
\def\SEP{\mathsf{SEP}}
\def\BCT{\mathsf{BCT}}
\def\IVT{\mathsf{IVT}}
\def\IMT{\mathsf{IMT}}
\def\OMT{\mathsf{OMT}}
\def\CGT{\mathsf{CGT}}
\def\UBT{\mathsf{UBT}}
\def\BWT{\mathsf{BWT}}
\def\HBT{\mathsf{HBT}}
\def\BFT{\mathsf{BFT}}
\def\FPT{\mathsf{FPT}}
\def\WAT{\mathsf{WAT}}
\def\LIN{\mathsf{LIN}}
\def\B{\mathsf{B}}
\def\BF{\mathsf{B_\mathsf{F}}}
\def\BI{\mathsf{B_\mathsf{I}}}
\def\C{\mathsf{C}}
\def\CF{\mathsf{C_\mathsf{F}}}
\def\CN{\mathsf{C_{\IN}}}
\def\CI{\mathsf{C_\mathsf{I}}}
\def\CK{\mathsf{C_\mathsf{K}}}
\def\CA{\mathsf{C_\mathsf{A}}}
\def\WPO{\mathsf{WPO}}
\def\WLPO{\mathsf{WLPO}}
\def\MP{\mathsf{MP}}
\def\BD{\mathsf{BD}}
\def\Fix{\mathsf{Fix}}
\def\Mod{\mathsf{Mod}}

\def\cof{{\mathrm{{cof}}}}

\def\s{\mathrm{s}}
\def\r{\mathrm{r}}
\def\w{\mathsf{w}}

\def\leqm{\mathop{\leq_{\mathrm{m}}}}
\def\equivm{\mathop{\equiv_{\mathrm{m}}}}
\def\leqT{\mathop{\leq_{\mathrm{T}}}}
\def\lT{\mathop{<_{\mathrm{T}}}}
\def\nleqT{\mathop{\not\leq_{\mathrm{T}}}}
\def\equivT{\mathop{\equiv_{\mathrm{T}}}}
\def\nequivT{\mathop{\not\equiv_{\mathrm{T}}}}
\def\leqwtt{\mathop{\leq_{\mathrm{wtt}}}}
\def\equiPT{\mathop{\equiv_{\P\mathrm{T}}}}
\def\leqW{\mathop{\leq_{\mathrm{W}}}}
\def\equivW{\mathop{\equiv_{\mathrm{W}}}}
\def\leqtW{\mathop{\leq_{\mathrm{tW}}}}
\def\leqSW{\mathop{\leq_{\mathrm{sW}}}}
\def\equivSW{\mathop{\equiv_{\mathrm{sW}}}}
\def\leqPW{\mathop{\leq_{\widehat{\mathrm{W}}}}}
\def\equivPW{\mathop{\equiv_{\widehat{\mathrm{W}}}}}
\def\leqFPW{\mathop{\leq_{\mathrm{W}^*}}}
\def\equivFPW{\mathop{\equiv_{\mathrm{W}^*}}}
\def\leqWW{\mathop{\leq_{\overline{\mathrm{W}}}}}
\def\nleqW{\mathop{\not\leq_{\mathrm{W}}}}
\def\nleqSW{\mathop{\not\leq_{\mathrm{sW}}}}
\def\lW{\mathop{<_{\mathrm{W}}}}
\def\lSW{\mathop{<_{\mathrm{sW}}}}
\def\nW{\mathop{|_{\mathrm{W}}}}
\def\nSW{\mathop{|_{\mathrm{sW}}}}
\def\leqt{\mathop{\leq_{\mathrm{t}}}}
\def\equivt{\mathop{\equiv_{\mathrm{t}}}}
\def\leqtop{\mathop{\leq_\mathrm{t}}}
\def\equivtop{\mathop{\equiv_\mathrm{t}}}

\def\bigtimes{\mathop{\mathsf{X}}}

\def\leqm{\mathop{\leq_{\mathrm{m}}}}
\def\equivm{\mathop{\equiv_{\mathrm{m}}}}
\def\leqT{\mathop{\leq_{\mathrm{T}}}}
\def\leqM{\mathop{\leq_{\mathrm{M}}}}
\def\equivT{\mathop{\equiv_{\mathrm{T}}}}
\def\equiPT{\mathop{\equiv_{\P\mathrm{T}}}}
\def\leqW{\mathop{\leq_{\mathrm{W}}}}
\def\equivW{\mathop{\equiv_{\mathrm{W}}}}
\def\nequivW{\mathop{\not\equiv_{\mathrm{W}}}}
\def\leqSW{\mathop{\leq_{\mathrm{sW}}}}
\def\equivSW{\mathop{\equiv_{\mathrm{sW}}}}
\def\leqPW{\mathop{\leq_{\widehat{\mathrm{W}}}}}
\def\equivPW{\mathop{\equiv_{\widehat{\mathrm{W}}}}}
\def\nleqW{\mathop{\not\leq_{\mathrm{W}}}}
\def\nleqSW{\mathop{\not\leq_{\mathrm{sW}}}}
\def\lW{\mathop{<_{\mathrm{W}}}}
\def\lSW{\mathop{<_{\mathrm{sW}}}}
\def\nW{\mathop{|_{\mathrm{W}}}}
\def\nSW{\mathop{|_{\mathrm{sW}}}}

\def\botW{\mathbf{0}}
\def\midW{\mathbf{1}}
\def\topW{\mathbf{\infty}}

\def\pol{{\leq_{\mathrm{pol}}}}
\def\rem{{\mathop{\mathrm{rm}}}}

\def\cc{{\mathrm{c}}}
\def\d{{\,\mathrm{d}}}
\def\e{{\mathrm{e}}}
\def\ii{{\mathrm{i}}}

\def\Cf{C\!f}
\def\id{{\mathrm{id}}}
\def\pr{{\mathrm{pr}}}
\def\inj{{\mathrm{inj}}}
\def\cf{{\mathrm{cf}}}
\def\dom{{\mathrm{dom}}}
\def\range{{\mathrm{range}}}
\def\graph{{\mathrm{graph}}}
\def\Graph{{\mathrm{Graph}}}
\def\epi{{\mathrm{epi}}}
\def\hypo{{\mathrm{hypo}}}
\def\Lim{{\mathrm{Lim}}}
\def\diam{{\mathrm{diam}}}
\def\dist{{\mathrm{dist}}}
\def\supp{{\mathrm{supp}}}
\def\union{{\mathrm{union}}}
\def\fiber{{\mathrm{fiber}}}
\def\ev{{\mathrm{ev}}}
\def\mod{{\mathrm{mod}}}
\def\sat{{\mathrm{sat}}}
\def\seq{{\mathrm{seq}}}
\def\lev{{\mathrm{lev}}}
\def\mind{{\mathrm{mind}}}
\def\arccot{{\mathrm{arccot}}}
\def\cl{{\mathrm{cl}}}

\def\Add{{\mathrm{Add}}}
\def\Mul{{\mathrm{Mul}}}
\def\SMul{{\mathrm{SMul}}}
\def\Neg{{\mathrm{Neg}}}
\def\Inv{{\mathrm{Inv}}}
\def\Ord{{\mathrm{Ord}}}
\def\Sqrt{{\mathrm{Sqrt}}}
\def\Re{{\mathrm{Re}}}
\def\Im{{\mathrm{Im}}}
\def\Sup{{\mathrm{Sup}}}

\def\LSC{{\mathcal LSC}}
\def\USC{{\mathcal USC}}

\def\CE{{\mathcal{E}}}
\def\Pref{{\mathrm{Pref}}}

\def\Baire{\IN^\IN}

\def\TRUE{{\mathrm{TRUE}}}
\def\FALSE{{\mathrm{FALSE}}}

\def\co{{\mathrm{co}}}

\def\BBB{{\tt B}}

\newcommand{\SO}[1]{{{\mathbf\Sigma}^0_{#1}}}
\newcommand{\SI}[1]{{{\mathbf\Sigma}^1_{#1}}}
\newcommand{\PO}[1]{{{\mathbf\Pi}^0_{#1}}}
\newcommand{\PI}[1]{{{\mathbf\Pi}^1_{#1}}}
\newcommand{\DO}[1]{{{\mathbf\Delta}^0_{#1}}}
\newcommand{\DI}[1]{{{\mathbf\Delta}^1_{#1}}}
\newcommand{\sO}[1]{{\Sigma^0_{#1}}}
\newcommand{\sI}[1]{{\Sigma^1_{#1}}}
\newcommand{\pO}[1]{{\Pi^0_{#1}}}
\newcommand{\pI}[1]{{\Pi^1_{#1}}}
\newcommand{\dO}[1]{{{\Delta}^0_{#1}}}
\newcommand{\dI}[1]{{{\Delta}^1_{#1}}}
\newcommand{\sP}[1]{{\Sigma^\P_{#1}}}
\newcommand{\pP}[1]{{\Pi^\P_{#1}}}
\newcommand{\dP}[1]{{{\Delta}^\P_{#1}}}
\newcommand{\sE}[1]{{\Sigma^{-1}_{#1}}}
\newcommand{\pE}[1]{{\Pi^{-1}_{#1}}}
\newcommand{\dE}[1]{{\Delta^{-1}_{#1}}}

\newcommand{\dBar}[1]{{\overline{\overline{#1}}}}

\def\QED{$\hspace*{\fill}\Box$}
\def\rand#1{\marginpar{\rule[-#1 mm]{1mm}{#1mm}}}

\def\BL{\BB}


\newcommand{\bra}[1]{\langle#1|}
\newcommand{\ket}[1]{|#1\rangle}
\newcommand{\braket}[2]{\langle#1|#2\rangle}

\newcommand{\ind}[1]{{\em #1}\index{#1}}
\newcommand{\mathbox}[1]{\[\fbox{\rule[-4mm]{0cm}{1cm}$\quad#1$\quad}\]}


\newenvironment{eqcase}{\left\{\begin{array}{lcl}}{\end{array}\right.}

\theoremstyle{definition}
\newtheorem{theorem}{Theorem}
\newtheorem{definition}[theorem]{Definition}
\newtheorem{problem}[theorem]{Problem}
\newtheorem{assumption}[theorem]{Assumption}
\newtheorem{corollary}[theorem]{Corollary}
\newtheorem{proposition}[theorem]{Proposition}
\newtheorem{lemma}[theorem]{Lemma}
\newtheorem{observation}[theorem]{Observation}
\newtheorem{question}[theorem]{Question}
\newtheorem{example}[theorem]{Example}
\newtheorem{convention}[theorem]{Convention}
\newtheorem{conjecture}[theorem]{Conjecture}
\newtheorem{remark}[theorem]{Remark}

\keywords{}
\subjclass{[{\bf Theory of computation}]:  Logic; [{\bf Mathematics of computing}]: Continuous mathematics.}

\begin{abstract}
In computable analysis typically topological spaces with countable bases
are considered. The Theorem of Kreitz-Weihrauch implies that the
subbase representation of a second-countable $\T_0$ space 
is admissible with respect to the topology that the subbase generates.
We consider generalizations of this setting to bases that are
representable, but not necessarily countable. We introduce
the notions of a computable presubbase and a computable prebase.
We prove a generalization of the Theorem of Kreitz-Weihrauch
for the presubbase representation that shows that any such
representation is admissible with respect to the topology
generated by compact intersections of the presubbase elements. 
For computable prebases we obtain representations that are
admissible with respect to the topology
that they generate. These concepts provide a natural way
to investigate many topological spaces that have been studied in computable analysis.
The benefit of this approach is that topologies can be described by
their usual subbases and standard constructions for such subbases can be applied.
Finally we discuss a Galois connection between presubbases and
representations of $\T_0$ spaces that indicates that presubbases
and representations offer particular views on the same mathematical
structure from different perspectives. 
\end{abstract}

\maketitle
\ \\[-0.5cm]\noindent{\footnotesize Version of \today.}\\[-0.5cm]

\section{Introduction}

In the representation based approach to computable analysis,
as initiated by Kreitz and Weihrauch~\cite{KW85,Wei87,Wei00}, 
we work with
{\em representations} $\delta:\In\IN^\IN\to X$, which are surjective partial maps. 
In this situation $(X,\delta)$, or briefly $X$, is called a {\em represented space}.
We say that a function $F:\In\IN^\IN\to\IN^\IN$ {\em realizes} a
multivalued map $f:\In X\mto Y$ on represented spaces $(X,\delta_X)$ and $(Y,\delta_Y)$, in 
symbols, $F\vdash f$, if $\delta_YF(p)\in f\delta_X(p)$ for all $p\in\dom(f\delta_X)$.
Then $f$ is called {\em computable} or {\em continuous}, if it has a realizer of the corresponding type.

On the other hand, every represented space $(X,\delta_X)$ comes naturally equipped with a topology $\OO(X)$,
namely the {\em final topology}
\[\OO(X):=\{U\In X:\delta_X^{-1}(U)\mbox{ open in }\dom(\delta_X)\}\]
induced by the Baire space topology on $\IN^\IN$ via 
the map $\delta_X$ on $X$. 
If not mentioned otherwise, we will assume that $\OO(X)$ is the final topology of the represented space $X$.
The topological spaces whose topology occurs in this way have a special name,
they are called $\qcb$ spaces (quotients of countably based spaces).
These spaces share some special properties~\cite{Sch02}, for instance, they are always {\em sequential}, i.e.,
a subset is closed if and only if it is {\em sequentially closed}, which means that it is closed under limits of
converging sequences.
We also automatically have a 
representation $\delta_{\OO(X)}$ of this topology $\OO(X)$ that is induced by a function space
representation of the space $\CC(X,\IS)$ of the continuous functions $f:X\to\IS$ with 
the {\em Sierpi\'nski space} $\IS=\{0,1\}$. This is because $U\In X$ is open if and only if
its characteristic function $\chi_U:X\to\IS$ is continuous. 
Matthias Schröder has developed a theory of computable topology based on these concepts~\cite{Sch02,Sch02c,Sch21} (see also Pauly~\cite{Pau16} for a concise presentation of some aspects of computable topology). The open sets $U\In X$ that are computable in $\OO(X)$
are called {\em c.e.\ open} and these are exactly the sets which computable characteristic
function $\chi_U:X\to\IS$.

An essential question is when continuity in terms of the representations $\delta_X$ and $\delta_Y$ (as defined above)
coincides for a singlevalued function $f:X\to Y$ with the ordinary notion of continuity with respect
to the topologies $\OO(X)$ and $\OO(Y)$. 
Every function $f$ with a continuous realizer is continuous in the ordinary topological sense, 
but not necessarily the other way around~\cite{Sch02}.
The two concepts of continuity are equivalent if $\delta_X$ and $\delta_Y$ are so-called {\em admissible}
representations with respect to these topologies. The notion of admissibility was originally
defined by Kreitz and Weihrauch just for second-countable $\T_0$ spaces~\cite{KW85} and then later
extended to arbitrary topological spaces by Schr\"oder~\cite{Sch02}. We will adopt the latter definition.

\begin{definition}[Admissibility]
A representation $\delta_X:\In\IN^\IN\to X$ is called {\em admissible} with respect to a
topology $\tau$ on $X$ if the following hold:
\begin{enumerate}
\item $\delta_X$ is continuous with respect to the topology $\tau$ and 
\item any other representation $\delta:\In\IN^\IN\to X$ that is continuous with respect to $\tau$ is also
continuous with respect to the representation $\delta_X$.
\end{enumerate}
A represented space is called {\em admissible} if the representation of the space is admissible with
respect to the final topology $\OO(X)$ induced by it.
\end{definition}

Admissibility of $\delta_X$ means that any continuous representation $\delta$ can be continuously translated into $\delta_X$.  
Hence the names $p\in\IN^\IN$ of $\delta_X$ provide the minimal information on the points $x\in X$
that is required for the representation to be continuous in the topological sense. 
Schr\"oder proved~\cite[Theorem~7, Lemma~8]{Sch02} that whether a representation $\delta_X$ 
is admissible with respect to some topology $\tau$ only depends on the sequentialization of that topology\footnote{The {\em sequentialization} $\seq(\tau)$ of a topology $\tau$ is the smallest topology that contains $\tau$ and is {\em sequential}.}.

\begin{theorem}[Schr\"oder 2002]
\label{thm:Schroder-admissible}
Let $(X,\delta_X)$ be a represented space with final topology $\OO(X)$ and let $\tau$ be some topology on $X$.
Then the following are equivalent:
\begin{enumerate}
\item $\delta_X$ is admissible with respect to $\tau$.
\item $\delta_X$ is admissible with respect to $\seq(\tau)$.
\end{enumerate}
If one these conditions holds, then $\seq(\tau)=\OO(X)$.
\end{theorem}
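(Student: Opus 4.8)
The plan is to reduce all statements about the topologies $\tau$ and $\seq(\tau)$ to statements about their convergent sequences, using that the domain of every representation is a metrizable, hence first-countable, subspace of $\IN^\IN$. The first step I would record is that $\tau$ and $\seq(\tau)$ have exactly the same convergent sequences: the inclusion $\tau\In\seq(\tau)$ makes every $\seq(\tau)$-convergent sequence $\tau$-convergent, and conversely if $x_n\to x$ in $\tau$ and $U\in\seq(\tau)$ contains $x$, then $U$ is sequentially open for $\tau$, so $x_n\in U$ eventually. Since a map out of a first-countable space is continuous if and only if it is sequentially continuous, it follows that an arbitrary representation $\delta:\In\IN^\IN\to X$ is continuous with respect to $\tau$ if and only if it is continuous with respect to $\seq(\tau)$. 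Both clauses of the Definition of admissibility are phrased purely in terms of which representations are continuous with respect to the ambient topology, so replacing $\tau$ by $\seq(\tau)$ changes nothing; this gives the equivalence of~(1) and~(2).

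For the final assertion I would assume admissibility and prove $\seq(\tau)=\OO(X)$ by two inclusions. The inclusion $\seq(\tau)\In\OO(X)$ is the easy one: admissibility includes continuity of $\delta_X$ with respect to $\tau$, so $\delta_X^{-1}(U)$ is open for each $U\in\tau$ and therefore $\tau\In\OO(X)$; since $(X,\OO(X))$ is a qcb space it is sequential, and $\seq(\tau)$ is the smallest sequential topology containing $\tau$, whence $\seq(\tau)\In\OO(X)$.

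The converse $\OO(X)\In\seq(\tau)$ is where the real work lies and where clause~(2) of admissibility enters. Because $\seq(\tau)$ is sequential, it is enough to show that every $\tau$-convergent sequence $x_n\to x$ is already $\OO(X)$-convergent; then any $U\in\OO(X)$ is sequentially open for $\tau$ and hence lies in $\seq(\tau)$. To realize such a sequence inside a name space I would fix $r_n=10^n10^\omega$ and $r_\infty=10^\omega$ in $\IN^\IN$, note that the $r_n$ are isolated among these points and that $r_n\to r_\infty$, and define a representation $\delta$ on $\{0p:p\in\dom(\delta_X)\}\cup\{r_n:n\in\IN\}\cup\{r_\infty\}$ by $\delta(0p)=\delta_X(p)$, $\delta(r_n)=x_n$, and $\delta(r_\infty)=x$. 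Surjectivity comes from the $\delta_X$-branch, and a short inspection of the convergent sequences in $\dom(\delta)$ (which are eventually confined to one branch) shows, using continuity of $\delta_X$ and $x_n\to x$, that $\delta$ is continuous with respect to $\tau$.

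Admissibility~(2) now provides a continuous $F$ with $\delta_X F=\delta$ on $\dom(\delta)$. Applying $F$ to $r_n\to r_\infty$ yields $F(r_n)\to F(r_\infty)$ in $\IN^\IN$, and since $\delta_X$ is continuous, hence sequentially continuous, with respect to its own final topology $\OO(X)$, we obtain $x_n=\delta_X F(r_n)\to\delta_X F(r_\infty)=x$ in $\OO(X)$. This is the required sequential convergence and finishes the inclusion, and with it the theorem. I expect the construction of $\delta$ to be the main obstacle: one must secure surjectivity and $\tau$-continuity simultaneously while pinning down exactly which sequences converge in $\dom(\delta)$, so that a single, well-chosen convergent sequence can carry the admissibility argument.
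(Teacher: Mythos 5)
The paper itself gives no proof of this theorem: it is stated as a quoted result of Schr\"oder with a citation to [Sch02, Theorem~7, Lemma~8], so there is nothing in the paper to compare your argument against line by line. On its own merits, your proof is correct, and it runs along the standard lines underlying Schr\"oder's original argument. Part (i): since $\tau$ and $\seq(\tau)$ have the same convergent sequences and the domain of any representation is a metrizable (hence first-countable) subspace of $\IN^\IN$, a representation is $\tau$-continuous iff it is $\seq(\tau)$-continuous; as both clauses of the admissibility definition refer only to which representations are continuous, the equivalence of (1) and (2) follows. Part (ii): the inclusion $\seq(\tau)\In\OO(X)$ follows from $\tau\In\OO(X)$ (continuity of $\delta_X$) together with sequentiality of $\OO(X)$ and minimality of $\seq(\tau)$; for $\OO(X)\In\seq(\tau)$, your device of gluing the names $r_n\to r_\infty$ of a $\tau$-convergent sequence $x_n\to x$ onto a disjoint copy of $\delta_X$ to obtain a $\tau$-continuous representation $\delta$, and then pushing $r_n\to r_\infty$ through the continuous reduction $F$ supplied by admissibility, correctly yields $\OO(X)$-convergence $x_n\to x$, whence every $\OO(X)$-open set is $\tau$-sequentially open. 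Two points you use implicitly are worth making explicit: first, that $\seq(\tau)$ coincides with the topology of $\tau$-sequentially open sets (you invoke both inclusions of this identity, once to pass from $U\in\seq(\tau)$ to sequential openness, and once to conclude $U\in\seq(\tau)$ from sequential openness); second, that all these comparisons preserve the limit point, not merely the fact of convergence, which matters since the spaces involved need not be Hausdorff. Both are routine, standard facts about sequentialization, so they are gaps of exposition rather than of substance.
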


In particular, this means that if $\delta$ is admissible with respect to some topology, then also with respect to $\OO(X)$.
Schr\"oder also showed~\cite[Theorem~13]{Sch02} that a topology for which an admissible representation exists
is necessarily $\T_0$. 
The topological spaces that admit an admissible representation are exactly those
whose sequentializations are so-called $\qcb_{0}$ spaces 
(the $\T_0$ spaces among the $\qcb$ spaces).
Finally, Schröder also proved that the representation $\delta_{\OO(X)}$ is always admissible 
with respect to the Scott topology on $\OO(X)$~\cite{Sch02c} and that admissibility of a represented
space can be characterized with the help of the {\em neighborhood map} 
$\UU:X\to\OO\OO(X),x\mapsto\{U\in\OO(X):x\in U\}$ of the space as follows. 

\begin{proposition}[Admissibility]
\label{prop:admissibility}
A represented space $X$ is admissible if and only if its neighborhood map $\UU:X\to\OO\OO(X)$ is a continuous embedding
with respect to the involved representations.
\end{proposition}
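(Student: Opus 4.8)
The plan is to prove both implications around the always-valid observation that the neighborhood map $\UU$ has a continuous realizer. Indeed, $\UU(x)=\{U\in\OO(X):x\in U\}$ is exactly the point-evaluation functional $\ev_x\colon\OO(X)\to\IS$, $U\mapsto[x\in U]$, which is a Scott-open subset of $\OO(X)$ and hence a legitimate element of $\OO\OO(X)$. Since evaluation $\ev\colon X\times\OO(X)\to\IS$ is computable by the very definition of $\OO(X)=\CC(X,\IS)$, type conversion turns $x\mapsto\ev_x$ into a (computable, hence continuous) realizer of $\UU$. Thus $\UU$ is always continuous, and the whole content of the statement concerns the injectivity of $\UU$ and the continuity of its partial inverse. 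Note also that $\UU$ is injective precisely when $\OO(X)$ separates points, i.e.\ when $(X,\OO(X))$ is $\T_0$.

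For the direction from admissibility to embedding, assume $X$ is admissible. Since an admissible representation forces its topology to be $\T_0$, the space $(X,\OO(X))$ is $\T_0$ and $\UU$ is injective. To continuously invert $\UU$, I would introduce the representation $\delta'$ of $X$ whose names are the $\delta_{\OO\OO(X)}$-names of the points $\UU(x)$, that is $\delta'(q):=\UU^{-1}\delta_{\OO\OO(X)}(q)$; it is onto $X$ because $\delta_{\OO\OO(X)}$ is onto $\OO\OO(X)$ and $\UU$ is injective. The key computation is $\delta'^{-1}(U)=\delta_{\OO\OO(X)}^{-1}\{W\in\OO\OO(X):U\in W\}$ for $U\in\OO(X)$, so continuity of evaluation on $\OO\OO(X)$ (the set $\{W:U\in W\}$ is open) shows that $\delta'$ is continuous with respect to $\OO(X)$. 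Admissibility of $X$ then yields a continuous translation of $\delta'$ into $\delta_X$, and this translation is precisely a continuous realizer of $\UU^{-1}$ on the image $\UU(X)$. Together with the realizer of $\UU$ this makes $\UU$ a continuous embedding.

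For the converse, assume $\UU$ is a continuous embedding, and let $E$ be a continuous realizer of $\UU^{-1}$. The basic input is that $\OO\OO(X)$ is itself admissible: applying Schröder's theorem that $\delta_{\OO(\cdot)}$ is admissible for the Scott topology to the space $\OO(X)$, and then invoking Theorem~\ref{thm:Schroder-admissible} to identify that topology with the final one, gives admissibility of $\delta_{\OO\OO(X)}$. Now take any representation $\delta$ of $X$ that is continuous with respect to $\OO(X)$; the goal is to translate $\delta$ continuously into $\delta_X$. Since $\OO(X)$ is contained in the final topology of $\delta$ and $\UU$ is topologically continuous on $(X,\OO(X))$, the map $\UU$ is topologically continuous from the final topology of $\delta$ into $\OO\OO(X)$. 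Admissibility of the target $\OO\OO(X)$ then provides a continuous realizer $G$ of $\UU\circ\delta$ (to apply clause~(2) of the definition, which speaks about surjective representations, one first merges $\UU\delta$ with $\delta_{\OO\OO(X)}$ to restore surjectivity and afterwards reads off the translation on the relevant branch). Finally $E\circ G$ realizes $\UU^{-1}\UU\delta=\delta$ relative to $\delta_X$, so $\delta$ is continuous with respect to $\delta_X$ and $X$ is admissible.

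The routine parts are the two openness/continuity verifications for evaluation; the step I expect to be the real obstacle is the converse, where topological continuity of $\delta$ must be upgraded to an actual continuous realizer of $\UU\circ\delta$. This upgrade is exactly what admissibility of the double-dual space $\OO\OO(X)$ buys us, and the only technical nuisance is the non-surjectivity of $\UU\circ\delta$, which the merging argument disposes of.
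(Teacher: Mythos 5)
Your proof is correct, but there is no in-paper argument to measure it against: the paper states this proposition as a known result of Schr\"oder (citing \cite{Sch02c}) and gives no proof of it, using it as a black box to motivate the notion of a computable Kolmogorov space. Judged on its own, your two-sided argument is sound and uses only ingredients the paper makes available. In the forward direction, the auxiliary representation $\delta':=\UU^{-1}\circ\delta_{\OO\OO(X)}$ is exactly the right object --- it is, incidentally, the representation $\delta^\bullet$ that the paper itself introduces later (Theorem~\ref{thm:represented-Kolmogorov} and the Epilogue), i.e., the presubbase representation associated with the presubbase $\id:\OO(X)\to\OO(X)$ --- and the translation $\delta'\leq\delta_X$ furnished by admissibility is literally a continuous realizer of $\UU^{-1}$, as you say. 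In the converse direction, deriving admissibility of the represented space $\OO\OO(X)$ from Schr\"oder's Scott-topology result applied to $\OO(X)$ together with Theorem~\ref{thm:Schroder-admissible}, then realizing $\UU\circ\delta$ by that admissibility and composing with the realizer $E$ of $\UU^{-1}$, is a clean factorization $\delta=\UU^{-1}\circ(\UU\circ\delta)$; your merging trick correctly handles the only formal obstacle, namely that $\UU\circ\delta$ is not surjective and so clause~(2) of the definition of admissibility does not apply to it directly.

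Two cosmetic points, neither of which affects correctness. First, the identity $\delta'^{-1}(U)=\delta_{\OO\OO(X)}^{-1}\{W\in\OO\OO(X):U\in W\}$ should be read relative to the domain, i.e., $\delta'^{-1}(U)=\dom(\delta')\cap\delta_{\OO\OO(X)}^{-1}\{W\in\OO\OO(X):U\in W\}$, which is all that continuity of the partial map $\delta'$ requires. Second, in the converse direction you should say explicitly that injectivity of $\UU$ (part of the embedding hypothesis, as the paper defines it) is what makes the final step $\UU^{-1}\UU\delta=\delta$ legitimate; without it $E\circ G$ need not realize the identity.
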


Being an embedding with respect to the representations means that $\UU$ is injective and $\UU$ as well as
its partial inverse have continuous realizers. Likewise, we define {\em computable embeddings} with computable
realizers. The map $\UU$ is always computable, hence the essential condition is the one that concerns its inverse.
In light of this result one can consider admissibility also as an effectivization of the $\T_0$ property
as the map $\UU$ is injective if and only if the topology $\OO(X)$ is $\T_0$ ($\T_0$ spaces are also known as a {\em Kolmogorov spaces}).
Consequently, we call $X$ a {\em computable Kolmogorov space} if $\UU$ is a computable embedding.\footnote{We 
prefer this terminology over the notion of {\em computable admissibility} that is also used for this concept, as one and the same
topological space can be a computable Kolmogorov space in different inequivalent ways, whereas topological
admissibility characterizes an equivalence class. The notion of admissibility is often linked to this uniqueness
property.}

Kreitz and Weihrauch~\cite{KW85} have shown that every second-countable $\T_0$ space has an admissible
representation. Namely, if $B=(B_n)_{n\in\IN}$ is a subbase of some topology $\tau$ on $X$, 
then the {\em subbase representation} $\delta^B:\In\IN^\IN\to X$ with
\[\delta^B(p)=x:\iff\range(p)-1=\{n\in\IN:x\in B_n\}\]
for all $p\in\IN^\IN$ and $x\in X$ is admissible with respect to $\tau$. 
Here we do not use $\range(p)$ but $\range(p)-1:=\{n\in\IN:n+1\in\range(p)\}$ in order to allow for the empty set to be enumerated.
We formulate the aforementioned result as a theorem.

\begin{theorem}[Kreitz-Weihrauch 1985]
\label{thm:Kreitz-Weihrauch}
If $X$ is a $\T_0$ space with a countable subbase $B$, then the subbase representation
$\delta^B$ is admissible with respect to the topology of $X$.
\end{theorem}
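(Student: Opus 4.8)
\emph{Proof proposal.} The plan is to verify the two defining conditions of admissibility for $\delta^B$ with respect to the topology $\tau$ generated by the subbase $B=(B_n)_{n\in\IN}$. Before that I would record that $\delta^B$ is a genuine representation, i.e.\ a surjective partial map, and this is exactly where the $\T_0$ hypothesis enters. For any $x\in X$ the index set $N(x):=\{n\in\IN:x\in B_n\}$ can be enumerated by some $p\in\Baire$ (using the shift by one so that the empty set is allowed), hence $\delta^B$ is surjective; and if $\delta^B(p)=x$ and $\delta^B(p)=y$, then $N(x)=N(y)$, so $x$ and $y$ lie in exactly the same subbasic sets, hence in the same finite intersections and the same unions of these, that is, in the same open sets, so that $\T_0$ forces $x=y$. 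Thus $\delta^B$ is single-valued.

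For the first condition I would show directly that $\delta^B$ is continuous with respect to $\tau$. Since $\tau$ is generated by $B$, it suffices to check that $(\delta^B)^{-1}(B_n)$ is open in $\dom(\delta^B)$ for every $n$. But $\delta^B(p)\in B_n$ holds if and only if $n+1\in\range(p)$, and the set of $p$ that take the value $n+1$ somewhere is open in $\Baire$, being a union of cylinders. Intersecting with $\dom(\delta^B)$ and passing to finite intersections and arbitrary unions then yields continuity on all of $\tau$.

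The substance of the proof is the second condition: given an arbitrary representation $\delta\colon\In\Baire\to X$ that is continuous with respect to $\tau$, I must produce a continuous $F\colon\In\Baire\to\Baire$ with $\delta^B F(p)=\delta(p)$ for all $p\in\dom(\delta)$. The idea is that $F(p)$ should enumerate $N(\delta(p))$. Continuity of $\delta$ makes each $\delta^{-1}(B_n)$ open in $\dom(\delta)$, so I would fix, for every $n$, the set $\WW_n$ of finite words $w$ with $[w]\cap\dom(\delta)\In\delta^{-1}(B_n)$; openness gives that $p\in\delta^{-1}(B_n)$ if and only if some prefix $w\prefix p$ lies in $\WW_n$. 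Then $F$ reads longer and longer prefixes of $p$ and, whenever it finds a prefix in some $\WW_n$, outputs $n+1$, interleaving the neutral value $0$ so that the output stays total and infinite. By construction $\range(F(p))-1=\{n:\delta(p)\in B_n\}=N(\delta(p))$, whence $\delta^B F(p)=\delta(p)$.

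The main obstacle is purely the bookkeeping that makes $F$ simultaneously continuous and total: each output symbol must be decided from a finite prefix of $p$, which the forcing-word characterization guarantees, and the enumeration must never stall even when $N(\delta(p))$ is finite or empty, which is handled by the $0$-padding. I would emphasize that the families $\WW_n$ need not be computable — they are used only as topological side information — so the resulting $F$ is continuous but not claimed to be computable, which is precisely what admissibility requires.
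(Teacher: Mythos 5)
Your proof is correct, but it takes a genuinely different route from the paper. The paper never proves Theorem~\ref{thm:Kreitz-Weihrauch} directly: it cites it as a classical result and then recovers it as the special case $Y=\IN$ of its Presubbase Theorem (Theorem~\ref{thm:presubbase}), observing that the compact subsets of $\IN$ are exactly the finite ones, so the topology generated by compact intersections of subbase elements coincides with the topology generated by the subbase itself. That proof is abstract: it factors $\delta^B$ as $(B^\T)^{-1}\circ\delta_{\OO(Y)}$, shows the transpose $B^\T:X\to\OO(Y)$ is a computable embedding (making $(X,\delta^B)$ a computable Kolmogorov space), and invokes Schröder's results that $\delta_{\OO(Y)}$ is admissible with respect to the compact-open topology and that admissibility is inherited along initial topologies. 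Your argument is instead the classical, self-contained one (essentially the original Kreitz--Weihrauch construction): you verify single-valuedness and surjectivity of $\delta^B$ from the $\T_0$ hypothesis, check continuity on the subbasic sets directly, and build the continuous translation $F$ by enumerating, via the forcing-prefix sets $\WW_n$, the indices $n$ with $\delta(p)\in B_n$, padding with $0$ to keep the output total; your closing remark that the $\WW_n$ serve only as topological side information, so that $F$ is continuous but not necessarily computable, is exactly the right observation for admissibility as defined here. Both arguments are sound: yours is elementary, needs nothing beyond Baire-space topology, and makes the translation explicit, while the paper's route buys generality (presubbases indexed by arbitrary represented spaces) and the stronger, effective conclusion that $(X,\delta^B)$ is a computable Kolmogorov space, at the cost of relying on Schröder's machinery.
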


This theorem does not only guarantee that there are sufficiently many nice representations,
but it also provides a tool that can be used to show that a concrete given representation $\delta_X$ is admissible. One just
needs to prove that the representation is {\em topologically equivalent} to some subbase representation $\delta^B$
with respect to a known subbase $B$ of the topology of interest, i.e., that $\id:X\to X$ is continuous in both directions
with respect to $\delta_X$ and $\delta^B$, respectively. 

One purpose of this article is to provide a similar tool for general (not necessarily second-countable) $\T_0$ spaces.
A second goal is to develop a theory of computable bases for represented spaces very much along the lines
of the second-countable case~\cite{BR26}. Finally, we also demonstrate that these concepts have interesting applications.

\section{The Presubbase Theorem}

We start with the following definition that extends the concept of a subbase representation beyond the countable case.

\begin{definition}[Presubbase]
Let $X$ be a set. We call a family $(B_y)_{y\in Y}$ of subsets of $X$ 
a {\em presubbase} for $X$,
if $Y$ is a represented space and its {\em transpose}
\[B^\T:X\to\OO(Y),x\mapsto\{y\in Y:x\in B_y\}\] 
is well-defined and injective. 
\end{definition}

Injectivity of $B^\T$ implies that $(B_y)_{y\in Y}$ is a subbase of some $\T_0$ topology on $X$. 
We note that every countable subbase $B:\IN\to\OO(X)$ of a $\T_0$ topology is a particular instance of a presubbase,
as $B^\T:X\to\OO(\IN)$ is always well-defined. Hence, the following definition generalizes the concept
of a subbase representation by Kreitz and Weihrauch as it is specified above.

\begin{definition}[Presubbase representation]
Let $(B_y)_{y\in Y}$ be a presubbase of a set $X$. 
We define the {\em presubbase representation} $\delta^B:\In\IN^\IN\to X$ by
\[\delta^B(p)=x:\iff \delta_{\OO(Y)}(p)=\{y\in Y:x\in B_y\}\]
for all $p\in\IN^\IN$ and $x\in X$.
\end{definition}

We note that the map $\delta^B$ is well-defined as $B^\T$ is well-defined and injective.
For short, we can also write $\delta^B=(B^\T)^{-1}\circ\delta_{\OO(Y)}$.
The reason that we speak about a {\em pre}subbase representation in this general situation and not about
a subbase representation is that $\delta^B$ is not necessarily admissible with respect to the
topology generated by $(B_y)_{y\in Y}$. 
However, it is admissible with respect to a closely related 
topology generated by compact intersections of the sets $B_y$, which is our first main result.
By a {\em compact} set $K\In X$ we mean a set with the property that every open cover
has a finite subcover (no Hausdorff condition involved).

\begin{theorem}[Presubbase theorem]
\label{thm:presubbase}
Let $(B_y)_{y\in Y}$ be a presubbase of a set $X$. 
Then $(X,\delta^B)$ is a computable Kolmogorov space and $\delta^B$ is admissible with respect to the
topology $\tau$ on $X$ that is generated by the base sets $X$ and $\bigcap_{y\in K}B_y$ for every compact $K\In Y$. 
\end{theorem}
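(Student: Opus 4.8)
The plan is to establish the two assertions separately: first that $(X,\delta^B)$ is a computable Kolmogorov space, and second that $\delta^B$ is admissible with respect to the stated topology $\tau$. By Proposition~\ref{prop:admissibility}, the computable Kolmogorov property amounts to showing that the neighborhood map $\UU:X\to\OO\OO(X)$ is a computable embedding with respect to the representation $\delta^B$. Since $\UU$ is always computable, the real content is that its partial inverse is computable. I would obtain this from the defining equation $\delta^B=(B^\T)^{-1}\circ\delta_{\OO(Y)}$: injectivity of $B^\T$ guarantees $\UU$ is injective, and the information carried by a $\delta^B$-name $p$ is exactly an enumeration of the open set $\{y\in Y:x\in B_y\}$ via $\delta_{\OO(Y)}$, from which one can uniformly recover the final-topology neighborhoods of $x$.

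For the admissibility claim, I would verify the two conditions of the admissibility definition directly, and then invoke Theorem~\ref{thm:Schroder-admissible} to conclude the matter is really about the sequentialization $\seq(\tau)=\OO(X)$. For condition~(1), I must show $\delta^B$ is continuous with respect to $\tau$, i.e., that for each subbasic set $\bigcap_{y\in K}B_y$ with $K\In Y$ compact, the preimage $(\delta^B)^{-1}\bigl(\bigcap_{y\in K}B_y\bigr)$ is open in $\dom(\delta^B)$. The key observation is that $x\in\bigcap_{y\in K}B_y$ means $K\In B^\T(x)$, and since $K$ is compact and $B^\T(x)$ is open in $Y$, membership $K\In U$ is a property of the open set $U=\delta_{\OO(Y)}(p)$ that is \emph{continuously decidable} in the Scott-topology sense on $\OO(Y)$: an enumeration of $U$ will eventually cover the compact set $K$ by finitely many subbasic opens. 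This is precisely where compactness is essential and where the topology is forced to use compact intersections rather than arbitrary ones.

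For condition~(2), I would take an arbitrary representation $\delta:\In\IN^\IN\to X$ that is continuous with respect to $\tau$ and show $\id:(X,\delta)\to(X,\delta^B)$ has a continuous realizer. Continuity of $\delta$ with respect to $\tau$ means that from a $\delta$-name $q$ of a point $x$ one can, for each compact $K\In Y$, semidecide whether $x\in\bigcap_{y\in K}B_y$, i.e., whether $K\In B^\T(x)$. The task is to convert this into an enumeration of the open set $B^\T(x)=\{y:x\in B_y\}$ in the sense of $\delta_{\OO(Y)}$, thereby producing a $\delta^B$-name. I would enumerate $B^\T(x)$ by testing, against all the compact sets arising from finite subconfigurations of basic opens of $Y$, which singletons or basic neighborhoods are contained in it; here I would use that $Y$ is itself a represented (hence qcb) space so that its open sets are generated by a countable family of basic opens whose compactness data can be enumerated.

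The hard part will be condition~(2): matching the ``compact cover'' information furnished by $\tau$-continuity of $\delta$ back to a genuine enumeration of the open set $B^\T(x)\in\OO(Y)$ in the $\delta_{\OO(Y)}$ sense. The subtlety is that $\tau$-continuity only tells us about \emph{compact} intersections $\bigcap_{y\in K}B_y$, whereas a $\delta^B$-name demands full information about $B^\T(x)$ as an open subset of $Y$; bridging this gap requires exploiting that every open subset of the qcb space $Y$ is the union of the interiors of the compact sets it contains in a computably accessible way, or equivalently that the Scott-continuous structure on $\OO(Y)$ is determined by compact saturated sets. I expect this step to lean on Schröder's characterization and on the fact that $Y$, being represented, has a well-behaved hyperspace $\OO(Y)$, so that the translation can be carried out uniformly and continuously.
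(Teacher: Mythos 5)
Your decomposition into the Kolmogorov property plus a direct verification of the two admissibility conditions is reasonable, and your condition~(1) is essentially right: $x\in\bigcap_{y\in K}B_y\iff K\In B^\T(x)$, and $\FF_K=\{U\in\OO(Y):K\In U\}$ is Scott open, hence open in the final topology of $\delta_{\OO(Y)}$, so $(\delta^B)^{-1}\bigl(\bigcap_{y\in K}B_y\bigr)=\delta_{\OO(Y)}^{-1}(\FF_K)\cap\dom(\delta^B)$ is open. (As a side remark, your sketch of the Kolmogorov part only re-derives computability of $\UU$, which is automatic; what must be shown is the inverse direction: from $\UU_x\in\OO\OO(X)$ one computes $B^\T_x=B^{-1}(\UU_x)\in\OO(Y)$, using that $B:Y\to\OO(X)$ is computable, and then inverts the computable embedding $B^\T$.) The genuine gap is in condition~(2), exactly where you flag ``the hard part,'' and the bridging idea you propose there would fail. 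First, $\tau$-continuity of $\delta$ gives openness of $\delta^{-1}\bigl(\bigcap_{y\in K}B_y\bigr)$ for each \emph{fixed} compact $K$; it does not give any procedure uniform in $K$, and there are in general uncountably many compact $K\In Y$, so ``semideciding $K\In B^\T(x)$ for each compact $K$'' is not data you can enumerate. Second, your proposed repair --- that every open subset of $Y$ is the union of the interiors of the compact sets it contains, in a computably accessible way, or that $\OO(Y)$ has an enumerable countable basis with compactness data --- is false for general represented spaces: that property is essentially consonance, and qcb spaces need not be consonant; the compact-open topology on $\OO(Y)$ is in general strictly coarser than the Scott topology (they agree only up to sequentialization, Proposition~\ref{prop:Scott-convergence} and Corollary~\ref{cor:Scott}). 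Moreover $\delta_{\OO(Y)}$-names are characteristic-function names $\chi_U:Y\to\IS$, not enumerations of basic opens, so the countably-based intuition does not apply.

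The missing idea that closes condition~(2) is to stop working pointwise and instead identify the topology $\tau$ globally: since $\bigcap_{y\in K}B_y=(B^\T)^{-1}(\FF_K)$ and $(B^\T)^{-1}(\OO(Y))=X$, the topology $\tau$ is precisely the \emph{initial topology} of the injection $B^\T:X\to\OO(Y)$ with respect to the compact-open topology on $\OO(Y)$. Now if $\delta$ is any representation of $X$ that is continuous with respect to $\tau$, then $B^\T\circ\delta$ is a continuous partial map into $\OO(Y)$ carrying the compact-open topology; since $\delta_{\OO(Y)}$ is admissible with respect to that topology (Corollary~\ref{cor:Scott}), there is a continuous $F:\In\IN^\IN\to\IN^\IN$ with $\delta_{\OO(Y)}F=B^\T\circ\delta$, and this very $F$ realizes $\id:(X,\delta)\to(X,\delta^B)$ because $\delta^B=(B^\T)^{-1}\circ\delta_{\OO(Y)}$. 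This is the paper's route (quoting Schr\"oder's theorem that admissibility is inherited along initial topologies), and it needs no consonance, no countable basis, and no uniformity over compact sets --- all the weight is carried by the already established admissibility of $\delta_{\OO(Y)}$ with respect to the compact-open topology.
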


We note that this result generalizes Theorem~\ref{thm:Kreitz-Weihrauch} by Kreitz-Weihrauch
as for countable subbases $B:\IN\to\OO(X)$
the compact subsets $K\In\IN$ are exactly the finite subsets and hence the topology
generated by $X$ and $\bigcap_{n\in K}B_n$ for compact $K\In\IN$ is exactly the
same topology as the topology generated by the subbase $B$ itself. 

In order to prove Theorem~\ref{thm:presubbase} we need to discuss the Scott topology,
which we are not going to define here as the next proposition embodies everything we need to know about it.
Let $X$ be a topological space with some topology $\OO(X)$ that is itself equipped with the Scott topology.
Then the sets $\FF_K:=\{U\in\OO(X):K\In U\}$ are open in $\OO(X)$ for compact $K\In X$
and they form a base of a topology that is sometimes called the {\em compact-open topology} on $\OO(X)$
or also the {\em upper Fell topology} (a notion that is typically applied dually to the space of closed subsets).
In general, this topology is just included in the Scott topology but not identical to it.
Spaces $X$ for which the two topologies coincide are called {\em consonant}.
For sequential spaces $X$ the two topologies share at least the same convergence relation
and in this sense sequential spaces are ``sequentially consonant''.
This result is well-known and we formulate a version with yet another characterization
(see \cite[Proposition~2.2]{Sch15}
for the proof of (1)$\iff$(2)$\iff$(4) and 
\cite[Lemma~4.2.2]{Sch02c} for the proof of (2)$\iff$(3)).

\begin{proposition}[Scott convergence]
	\label{prop:Scott-convergence}
	Let $X$ be a sequential topological space. Let $(U_n)$ be a sequence in $\OO(X)$ and $U\in\OO(X)$.
	Then the following are equivalent:
	\begin{enumerate}
		\item $U_n\to U$ with respect to the Scott topology.
		\item $U_n\to U$ with respect to the topology, which is generated by the base of sets $\FF_K$ 
		over all compact $K\In X$.
		\item $U_n\to U$ with respect to the topology, which is generated by the subbase of sets $\FF_{\{x,x_n:n\in\IN\}}$
		over all sequences $(x_n)$ and $x$ in $X$ such that $x_n\to x$.
		\item $U\In\bigcup_{k\in\IN}\left(\bigcap_{n\geq k}U_n\right)^\circ$.
	\end{enumerate}
\end{proposition}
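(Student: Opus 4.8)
The plan is to run the cycle $(1)\Rightarrow(2)\Rightarrow(3)\Rightarrow(4)\Rightarrow(1)$, exploiting that three of the four topologies on $\OO(X)$ are nested. Throughout I write $V_k:=\left(\bigcap_{n\geq k}U_n\right)^{\!\circ}$ for the interior of the $k$-th tail intersection; the $V_k$ form an increasing sequence of open sets, so condition~(4) reads $U\In\bigcup_{k\in\IN}V_k$, and a point lies in $\bigcup_kV_k$ precisely when it has an open neighbourhood that is eventually contained in the $U_n$. For the easy links, recall that a convergent sequence together with its limit is compact, so every subbasic set $\FF_{\{x,x_n:n\in\IN\}}$ of the topology in~(3) is one of the sets $\FF_K$ with $K$ compact; hence the topology in~(3) is coarser than the compact--open topology in~(2), which by the discussion preceding the proposition is in turn coarser than the Scott topology in~(1). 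Since convergence is preserved when one passes to a coarser topology, this yields $(1)\Rightarrow(2)\Rightarrow(3)$ with no further work.

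For $(4)\Rightarrow(1)$ I argue directly. Let $\mathcal U\In\OO(X)$ be Scott open with $U\in\mathcal U$. Since $\mathcal U$ is upward closed and $U\In\bigcup_kV_k=:V$, the open set $V$ lies in $\mathcal U$. The family $(V_k)_k$ is directed with union (hence supremum) $V$, so the inaccessibility-by-directed-joins property defining the Scott topology provides some $k$ with $V_k\in\mathcal U$. For every $n\geq k$ we have $V_k\In\bigcap_{m\geq k}U_m\In U_n$, and upward closure of $\mathcal U$ gives $U_n\in\mathcal U$. Thus $U_n$ is eventually in $\mathcal U$, which is exactly $U_n\to U$ in the Scott topology. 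Combined with the trivial chain above, this also records $(2)\Rightarrow(4)$ and $(1)\Rightarrow(4)$, so the whole equivalence will follow once $(3)\Rightarrow(4)$ is established.

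Finally $(3)\Rightarrow(4)$ is the only implication that uses sequentiality, and I expect it to carry the real difficulty. I would argue contrapositively: assuming $U\not\In\bigcup_kV_k$, pick $z\in U$ with $z\notin V_k$ for all $k$ and manufacture a convergent test sequence refuting~(3). If $z\notin U_n$ for infinitely many $n$, the constant sequence $x_m:=z$ already suffices, since $\{z\}\In U$ but $\{z\}\not\In U_n$ infinitely often, so $U_n\notin\FF_{\{z\}}$ infinitely often and $U_n$ fails to converge to $U$ in the topology of~(3). In the remaining case $z\in U_n$ for all large $n$, and after discarding finitely many terms (which leaves $\bigcup_kV_k$, and hence the choice of $z$, unaffected) one may assume $z\in U_n$ for every $n$. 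Then $z$ is interior to no tail intersection, so $z\in\cl\left(\bigcup_{n\geq k}(X\sm U_n)\right)$ for every $k$, while $z$ itself lies in every $U_n$.

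The crux, and the main obstacle, is to convert this closure information into an \emph{actual} sequence. Using that $X$ is sequential (closed sets are exactly the sequentially closed ones), I would extract a sequence $x_m\to x$ whose limit $x$ and all terms lie in $U$ but with $x_m\notin U_{n_m}$ for indices $n_m\to\infty$; then $K:=\{x\}\cup\{x_m:m\in\IN\}$ is a compact convergent-sequence set with $K\In U$, so $U\in\FF_K$, yet $K\not\In U_n$ for infinitely many $n$, so $U_n\notin\FF_K$ infinitely often, contradicting~(3). The delicate point is that sequentiality only furnishes sequences into a set from points of its \emph{sequential} closure, not from arbitrary points of the topological closure; hence the escaping sequence cannot be read off a single closure in one step and must instead be built by a diagonal construction over $k$ and over shrinking neighbourhoods of $z$, with sequentiality invoked to guarantee that the diagonal sequence genuinely converges into $U$.
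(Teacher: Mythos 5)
Your reductions $(1)\Rightarrow(2)\Rightarrow(3)$ and your proof of $(4)\Rightarrow(1)$ are correct and complete (note the paper itself does not prove this proposition but cites Schr\"oder's work for it; your $(4)\Rightarrow(1)$ via directedness of the $V_k$ is the standard argument). The genuine gap is in $(3)\Rightarrow(4)$, the only implication where sequentiality enters, and the method you sketch there cannot be repaired: you insist on manufacturing an escaping sequence anchored at the chosen point $z\in U\sm\bigcup_kV_k$, built ``by a diagonal construction over $k$ and over shrinking neighbourhoods of $z$''. In a general sequential space $z$ has no countable neighbourhood base, so ``shrinking neighbourhoods'' are not available; worse, no sequence converging to $z$ with the required properties need exist at all. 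Concretely, in the Arens space $X=\{*\}\cup\{a_n:n\in\IN\}\cup\{b_{n,m}:n,m\in\IN\}$ (finest topology with $b_{n,m}\to a_n$ and $a_n\to *$), take $U:=X$ and $U_j:=X\sm\{b_{n,j}:n\in\IN\}$. Then $\bigcap_{n\geq k}U_n=\{*\}\cup\{a_n:n\in\IN\}\cup\{b_{n,m}:m<k\}$, whose interior never contains $*$, so $z=*$ witnesses the failure of (4), and $z\in U_n$ for all $n$ (your second case). But every sequence converging to $*$ is eventually inside the spine $\{*\}\cup\{a_n:n\in\IN\}\In\bigcap_{n\geq k}U_n$, so no sequence converging to $z$, and no diagonal built from data at $z$, can refute (3); the actual witnesses are the columns $K=\{a_n\}\cup\{b_{n,m}:m\in\IN\}$, with $K\In U$ but $K\not\In U_j$ for every $j$, whose limits $a_n$ are far away from $z$. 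So the escaping sequence must be allowed to converge to an arbitrary point of $U$, and no local construction at $z$ can find it.

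The way to close the gap is a global dichotomy, which is where sequentiality and the openness of the $U_j$ are really used. Work in the open subspace $U$ (again sequential), set $F_j:=U\sm U_j$ (closed in $U$) and $D_N:=\bigcup_{j\geq N}F_j$; as you correctly showed, $z\in\cl(D_N)$ for every $N$. Either there exists \emph{some} convergent sequence $y_m\to y$ in $U$ with $y_m\in F_{j_m}$ and $j_m\to\infty$; then $K:=\{y\}\cup\{y_m:m\in\IN\}$ is compact, $K\In U$, and $K\not\In U_{j_m}$ for all $m$, so $U\in\FF_K$ while $U_n\notin\FF_K$ for infinitely many $n$, contradicting (3). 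Or no such sequence exists anywhere in $U$; then each $D_N$ is sequentially closed, because a convergent sequence in $D_N$ with unbounded escape indices would fall into the first case, while one with bounded indices lies in a finite union $\bigcup_{j=N}^{J}F_j$ of closed sets, which already contains its limit. By sequentiality $D_N$ is then closed, so $z\in\cl(D_N)=D_N$ for all $N$, i.e.\ $z\notin U_j$ for infinitely many $j$, and the singleton $K=\{z\}$ contradicts (3). This dichotomy replaces your diagonalization, needs no neighbourhood base, and makes essential use of the closedness of the sets $F_j$ (equivalently, openness of the $U_j$), which any proof must, since the corresponding statement for arbitrary increasing ``absorbing'' families is false.
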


Here $A^\circ$ denotes the {\em interior} of the set $A$.
As a consequence of this result, $\OO(X)$ is not just admissibly represented with respect
to the Scott topology, but also with respect to the compact-open topology.
By Theorem~\ref{thm:Schroder-admissible} we 
obtain the following characterization of the topology of the space $\OO(X)$.

\begin{corollary}[Schröder 2002]
\label{cor:Scott}
Let $X$ be a represented space. Then $\OO(X)$ is endowed with the Scott topology
that is the sequentialization of the compact-open topology and the standard
representation of $\OO(X)$ is admissible with respect to both topologies.
\end{corollary}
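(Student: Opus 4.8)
The plan is to assemble the corollary from three ingredients already available: Schröder's cited fact that $\delta_{\OO(X)}$ is admissible with respect to the Scott topology on $\OO(X)$, the transfer principle of Theorem~\ref{thm:Schroder-admissible}, and the convergence equivalence of Proposition~\ref{prop:Scott-convergence}. First I would record the standing hypothesis that makes these applicable: since $X$ is a represented space, its final topology is a qcb topology and hence sequential, so Proposition~\ref{prop:Scott-convergence} applies directly to sequences in $\OO(X)$. Throughout I write $\OO\OO(X)$ for the final topology of the represented space $\OO(X)$ induced by $\delta_{\OO(X)}$.

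Next I would exploit the admissibility of $\delta_{\OO(X)}$ with respect to the Scott topology. Applying Theorem~\ref{thm:Schroder-admissible} to the represented space $\OO(X)$ with $\tau$ the Scott topology yields two things simultaneously: the sequentialization $\seq(\text{Scott})$ of the Scott topology equals $\OO\OO(X)$, and $\delta_{\OO(X)}$ is admissible with respect to $\seq(\text{Scott})=\OO\OO(X)$ as well. This settles the half of the statement concerning the Scott topology.

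To bring in the compact-open topology I would invoke the equivalence (1)$\iff$(2) of Proposition~\ref{prop:Scott-convergence}: a sequence in $\OO(X)$ converges in the Scott topology exactly when it converges in the topology generated by the sets $\FF_K$. Since the sequentialization of a topology is determined by its sequentially closed sets, which in turn depend only on which sequences converge to which limits, this equivalence forces the two topologies to share a sequentialization, so $\seq(\text{compact-open})=\seq(\text{Scott})=\OO\OO(X)$. Feeding this back into the equivalence of Theorem~\ref{thm:Schroder-admissible}, now with $\tau$ the compact-open topology, I would conclude that $\delta_{\OO(X)}$ is admissible with respect to the compact-open topology precisely because it is admissible with respect to its sequentialization $\OO\OO(X)$. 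Together with the previous paragraph, this gives admissibility with respect to both topologies.

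The remaining, and in my view the only genuinely nonformal, point is the identification of the topology that $\OO(X)$ actually carries: I want $\OO\OO(X)$ to be the Scott topology itself and the Scott topology to equal $\seq(\text{compact-open})$. Both reduce to the single assertion that the Scott topology on $\OO(X)$ is sequential, for then $\text{Scott}=\seq(\text{Scott})=\seq(\text{compact-open})=\OO\OO(X)$ closes everything. The inclusion $\text{Scott}\In\OO\OO(X)$ comes for free from the continuity clause of admissibility, but the reverse inclusion $\OO\OO(X)\In\text{Scott}$ is not formal: it says that every sequentially Scott-open set is already Scott-open, which is exactly sequentiality of the Scott topology. This is where I expect the real work to sit, and I would supply it by Schröder's theorem that for a qcb space $X$ the Scott topology on $\OO(X)$ is sequential, equivalently that it is the final topology of the function space $\CC(X,\IS)\cong\OO(X)$; with that input the three displayed equalities complete the proof.
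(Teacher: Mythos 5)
Your proposal is correct and takes essentially the same route as the paper, which offers no separate proof but presents the corollary as a direct consequence of exactly the three ingredients you assemble: the cited fact that $\delta_{\OO(X)}$ is admissible with respect to the Scott topology, Theorem~\ref{thm:Schroder-admissible}, and Proposition~\ref{prop:Scott-convergence}. The only difference is that you explicitly isolate the step the paper leaves implicit in its attribution to Schr\"oder --- that the Scott topology on $\OO(X)$ is itself sequential, so that it literally equals $\OO\OO(X)$ and the sequentialization of the compact-open topology rather than merely sharing their sequentialization --- and you correctly source that remaining fact to Schr\"oder as well.
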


Now we are actually prepared to prove Theorem~\ref{thm:presubbase}.

\begin{proof}[Proof of Theorem~\ref{thm:presubbase}]
Since $(B_y)_{y\in Y}$ is a presubbase of $X$, the map $B^\T:X\to\OO(Y)$ is well-defined and injective. 
We consider the represented space $(X,\delta^B)$ and we prove that it is a computable Kolmogorov space. 
We represent $\OO(X)$ and $\OO(Y)$ in the usual way, which means that they are endowed
with the respective Scott topologies by Corollary~\ref{cor:Scott}.
The definition $\delta^B=(B^\T)^{-1}\circ\delta_{\OO(Y)}$ ensures that
$B^\T:X\to\OO(Y)$ becomes a computable embedding with respect to $\delta^B$
as a representation of $X$. This implies that $B:Y\to\OO(X)$ is computable too.
We need to prove that also $\UU:X\to\OO\OO(X)$ is a computable embedding.
Given $\UU_x=\{U\in\OO(X):x\in U\}\in\OO\OO(X)$ for some $x\in X$, we can compute
\[B^\T_x=\{y\in Y:x\in B_y\}=B^{-1}(\UU_x)\in\OO(Y)\]
since $B$ is computable. And since $B^\T$ is a computable embedding, we can compute $x\in X$ from this set.
Hence, also $\UU:X\to\OO\OO(X)$ is a computable embedding and $(X,\delta^B)$ is a computable Kolmogorov space.

In particular, $\delta^B$ is admissible with respect to its final topology $\OO(X)$ by Proposition~\ref{prop:admissibility}.
We need to prove that it is also admissible with respect to the topology $\tau$.
We already know by Corollary~\ref{cor:Scott} that $\OO(Y)$ is admissibly represented with
respect to the compact-open topology generated by the sets $\FF_K=\{U\in\OO(Y):K\In U\}$ over all compact $K\In Y$.
Now we note that
\[\bigcap_{y\in K}B_y=\{x\in X:K\In B_x^\T\}=(B^\T)^{-1}(\FF_K)\]
and $(B^\T)^{-1}(\OO(Y))=X$. Hence, $\tau$ is the initial topology of $B^\T$
with respect to the compact-open topology on $\OO(Y)$,
and hence $\delta^B=(B^\T)^{-1}\circ\delta_{\OO(Y)}$ is admissible with respect to $\tau$ by~\cite[Section~4.2]{Sch02} (see also \cite[Proposition~4.1.4]{Sch02c}).
\end{proof}

For simplicity we denote convergent sequences $(x_n)_{n\in\IN}$ that
converge to $x_\infty$  
as $(x_n)_{n\in\IN_\infty}$ with $\IN_\infty:=\IN\cup\{\infty\}$.
By Proposition~\ref{prop:Scott-convergence} we could
alternatively also use the subbase sets
$\FF_K$ with $K=\{y_n:n\in\IN_\infty\}$ over
all convergent sequences $(y_n)_{n\in\IN_\infty}$ in $Y$ in the previous proof.
Hence, we obtain the following.

\begin{remark}
	\label{rem:presubbase}
The statement of Theorem~\ref{thm:presubbase} remains true if we
replace the intersections $\bigcap_{y\in K}B_y$ by intersections $\bigcap_{n\in\IN_\infty} B_{y_n}$
over all convergent sequences $(y_n)_{n\in\IN_\infty}$ in $Y$.
\end{remark}

General so called {\em $Y$-indexed bases} of type $B:Y\to\OO(X)$ have
also been independently considered in \cite[Definition~6.1]{dBSS16}.
The remark above relates our results 
regarding presubbases to the 
so-called {\em sequentially $Y$--indexed
generating systems} considered in~\cite[Definition~7.1]{dBSS16}.

When we have a presubbase $B$ of a represented space $X$, then we have to 
deal in general with at least four different topologies, all generated by $B$ in 
different ways. We illustrate these in Figure~\ref{fig:topologies}.
By $\bigcap_\KK B$ we denote the subbase described in Theorem~\ref{thm:presubbase}
and by $\bigcap_\infty B$ we denote the subbase generated over all convergent
sequences, as described in Remark~\ref{rem:presubbase}.

\begin{figure}[htb]
\begin{footnotesize}
\begin{center}
	\begin{minipage}{2.8cm}
		topology generated\\
		by $B$ as a subbase
	\end{minipage}
	$\In\,\,$
	\begin{minipage}{3.2cm}
	topology generated by\\
	$\bigcap_\infty B$ as a subbase
   \end{minipage}
	$\In\,\,$
   \begin{minipage}{3.2cm}
	topology generated by\\
	$\bigcap_\KK B$ as a subbase
   \end{minipage}
	$\In\,\,$
	\begin{minipage}{3cm}
		final topology of the\\
		representation $\delta^B$
	\end{minipage}
\end{center}
\end{footnotesize}
\caption{Topologies associated to a presubbase $B$ of a represented space.}
\label{fig:topologies}
\end{figure}
	
The fourth (right-hand side) topology in Figure~\ref{fig:topologies} is always the sequentialization of the second and of the third one.
These are consequences of Theorem~\ref{thm:Schroder-admissible}, combined with 
Remark~\ref{rem:presubbase} and Theorem~\ref{thm:presubbase}, respectively.
In the case of countable presubbases $B:\IN\to\OO(X)$ all four topologies
coincide. We introduce a name for presubbases for which the first and the second
topologies coincide.

\begin{definition}[Convergent intersection property]
We say that a presubbase $B:Y\to\OO(X)$ of a represented space $X$ satisfies the
{\em convergent intersection property} if for every convergent sequence $(y_n)_{n\in\IN_\infty}$ in $Y$ there exists a finite set $F\In Y$ with
$\bigcap_{n\in\IN_\infty}B_{y_n}=\bigcap_{y\in F}B_y$.
\end{definition}

For presubbases $B$ with the convergent intersection property all
four topologies in Figure~\ref{fig:topologies} have the fourth one as sequentialization.
	
\begin{corollary}[Convergent intersection property]
\label{cor:convergent-intersection}
Let $(B_y)_{y\in Y}$ be a presubbase of a set $X$ that satisfies the convergent intersection property. 
Then $\delta^B$ is admissible with respect to the
topology $\tau$ on $X$ that is generated by $B$. 
\end{corollary}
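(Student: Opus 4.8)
The plan is to show that the convergent intersection property collapses the first two topologies in Figure~\ref{fig:topologies} into one, and then to read off the conclusion from the admissibility already proved in Remark~\ref{rem:presubbase}. Write $\tau$ for the topology generated by $B$ as a subbase and $\tau_\infty$ for the topology generated by the sets $\bigcap_{n\in\IN_\infty}B_{y_n}$ over all convergent sequences $(y_n)_{n\in\IN_\infty}$ in $Y$ (together with $X$ itself). By Remark~\ref{rem:presubbase}, $\delta^B$ is admissible with respect to $\tau_\infty$, so it suffices to establish $\tau=\tau_\infty$.

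First I would record the easy inclusion $\tau\In\tau_\infty$: each subbasic set $B_y$ of $\tau$ equals $\bigcap_{n\in\IN_\infty}B_{y_n}$ for the constant sequence $y_n=y$, which is convergent, so every generator of $\tau$ is a generator of $\tau_\infty$.

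For the reverse inclusion $\tau_\infty\In\tau$ I would invoke the hypothesis directly. Given a convergent sequence $(y_n)_{n\in\IN_\infty}$ in $Y$, the convergent intersection property yields a finite $F\In Y$ with $\bigcap_{n\in\IN_\infty}B_{y_n}=\bigcap_{y\in F}B_y$. The right-hand side is a finite intersection of subbasic sets of $\tau$, hence a basic open set of $\tau$. So every subbasic generator of $\tau_\infty$ already lies in $\tau$, and since these generators produce all of $\tau_\infty$, we get $\tau_\infty\In\tau$.

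Combining the two inclusions gives $\tau=\tau_\infty$, whence admissibility of $\delta^B$ with respect to $\tau_\infty$ is exactly admissibility with respect to $\tau$. I do not expect a genuine obstacle: the argument is a short topology-identification feeding into Remark~\ref{rem:presubbase}. The only points meriting care are that a constant sequence counts as a convergent witness (giving $\tau\In\tau_\infty$) and that a finite intersection of subbasic sets is a bona fide basic open set of $\tau$ (giving $\tau_\infty\In\tau$); alternatively, since by Theorem~\ref{thm:Schroder-admissible} admissibility depends only on the sequentialization, it would even suffice to match the sequentializations, but proving the stronger equality $\tau=\tau_\infty$ is cleaner.
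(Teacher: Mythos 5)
Your proposal is correct and matches the paper's (largely implicit) argument: the paper also derives the corollary by observing that the convergent intersection property makes the topology generated by $B$ coincide with the topology generated by $\bigcap_\infty B$, and then invokes the admissibility statement of Remark~\ref{rem:presubbase}. Your write-up simply makes explicit the two inclusions (constant sequences for $\tau\In\tau_\infty$, the finite-set hypothesis for $\tau_\infty\In\tau$) that the paper leaves to the reader.
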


\section{Computable Presubbases, Prebases and Bases}

In this section we want to develop a theory of computable bases $B:Y\to\OO(X)$ along the lines of the theory
of computable bases for second-countable spaces~\cite{BR26}.
Bases with more complicated index sets have already been studied in other contexts, see, e.g., \cite{dBSS16}.
Firstly, the proof of Theorem~\ref{thm:presubbase} already indicates how we can define computable
presubbases.

\begin{definition}[Computable presubbase]
\label{def:computable-presubbase}
Let $X$ and $Y$ be represented spaces. 
Then $B:Y\to \OO(X)$ is called a {\em computable presubbase}
of $X$ if the transpose  
\[B^\T:X\to\OO(Y),x\mapsto\{y\in Y:x\in B_y\}\]
is a well-defined computable embedding.
\end{definition}

We note that $B^\T$ is well-defined and computable if and only if $B$ is so.
Obviously, a presubbase $B$ of $X$ is a computable presubbase of $X$ if and only
if the representation of $X$ is computably equivalent to $\delta^B$.
This observation together with Theorems~\ref{thm:presubbase} and \ref{thm:Schroder-admissible} yield the following corollary.

\begin{corollary}[Presubbases]
\label{cor:presubbase-theorem}
If $X$ is a represented space with a computable presubbase $B:Y\to\OO(X)$, then $X$ is a computable
Kolmogorov space and $\OO(X)=\seq(\tau)$ for the topology $\tau$ that is generated by the base
sets $X$ and $\bigcap_{y\in K}B_y$ for every compact $K\In Y$. 
\end{corollary}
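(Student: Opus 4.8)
The plan is to reduce the corollary to the Presubbase Theorem (Theorem~\ref{thm:presubbase}) by means of the observation recorded immediately above the statement: that $B$ is a computable presubbase of $X$ precisely when the representation of $X$ is computably equivalent to the presubbase representation $\delta^B$. Write $\delta_X$ for the representation of $X$. Since $B$ is a computable presubbase, $\delta_X$ and $\delta^B$ are computably equivalent, and in particular they induce the same final topology $\OO(X)$ on $X$. First I would invoke Theorem~\ref{thm:presubbase} for the underlying presubbase $(B_y)_{y\in Y}$: this gives that the represented space $(X,\delta^B)$ is a computable Kolmogorov space and that $\delta^B$ is admissible with respect to the topology $\tau$ generated by the base sets $X$ and $\bigcap_{y\in K}B_y$ over all compact $K\In Y$.

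Next I would transport the computable Kolmogorov property from $\delta^B$ to $\delta_X$. The neighborhood map $\UU:X\to\OO\OO(X)$ depends only on the final topology $\OO(X)$, which $\delta_X$ and $\delta^B$ share; hence it is literally the same map in both cases. Because $\delta_X$ and $\delta^B$ are computably equivalent, a map into or out of $X$ has a computable realizer with respect to $\delta_X$ if and only if it has one with respect to $\delta^B$. Consequently $\UU$ is a computable embedding with respect to $\delta_X$ as well, so $(X,\delta_X)$ is a computable Kolmogorov space.

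For the equality $\OO(X)=\seq(\tau)$ I would appeal to Schröder's Theorem~\ref{thm:Schroder-admissible}. Admissibility of $\delta^B$ with respect to $\tau$ yields, by the final clause of that theorem, that $\seq(\tau)$ equals the final topology induced by $\delta^B$; since $\delta_X$ and $\delta^B$ induce the same final topology $\OO(X)$, we conclude $\OO(X)=\seq(\tau)$.

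The argument is short, and its only delicate point is the transfer step: one must check that being a computable Kolmogorov space is an invariant of the computable-equivalence class of the representation, i.e.\ that both the target $\OO\OO(X)$ and the computability of $\UU$ together with its partial inverse are unchanged when $\delta^B$ is replaced by the computably equivalent $\delta_X$. This is routine once one notes that computable equivalence of representations preserves the final topology and the class of computable maps, so I do not expect a serious obstacle here.
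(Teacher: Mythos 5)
Your proposal is correct and takes essentially the same route as the paper, which obtains the corollary precisely by combining the observation that a computable presubbase means the representation of $X$ is computably equivalent to $\delta^B$ with Theorem~\ref{thm:presubbase} and the final clause of Theorem~\ref{thm:Schroder-admissible}. The transfer step you spell out (invariance of the computable Kolmogorov property and of the final topology under computable equivalence of representations) is exactly the routine verification the paper leaves implicit, and your treatment of it is sound.
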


In analogy to the countable case we can now define the concept of a computable base.
In the second-countable case we demand that every {\em finite} intersection of subbase
sets should be computably representable as a {\em countable} union of subbase sets.
If we replace ``finite'' by ``compact'' and ``countable'' by ``overt'', then we obtain the general concept
of a computable base. 
By $\KK_-(Y)$ we denote the set of saturated compact subsets $K\In Y$
represented via continuous maps $\forall_K:\OO(X)\to\IS$ and by $\AA_+(Y)$ we denote
the space of closed subsets $A\In Y$ represented via continuous maps $\exists_A:\OO(X)\to\IS$.
Here $\forall_K$ is just the characteristic function of $\FF_K$ and $\exists_A$ is the
characteristic function of $\TT_A:=\{U\in\OO(X):A\cap U\not=\varnothing\}$ (see~\cite{Pau16,Sch21}). We recall that a set $A\In X$ is called {\em overt}, if $\exists_A$
is computable. 
Using the above terminology we obtain the following straightforward definition.

\begin{definition}[Computable prebase]
\label{def:computable-base}
Let $X$ be a represented space with a computable presubbase $B:Y\to\OO(X)$.
Then $B$ is called a {\em computable prebase} of $X$, if there is a computable 
$R:\KK_-(Y)\mto\AA_+(Y)$ such that
\[\bigcap_{y\in K}B_y=\bigcup_{y\in A}B_y\]
for every $K\in\KK_-(Y)$ and $A\in R(K)$.
We work with the assumption that $\bigcap_{y\in\varnothing}B_y=X$.
We call $B$ a {\em computable base} of $X$ if $B$ is actually a base of $X$.
\end{definition}

Our definition of
a computable base is a computable analogue of the definition of a
$Y$--base provided in \cite[Definition~6.1]{dBSS16}. However, we note that the
more important concept for us is that of a computable prebase, which is 
related to the {\em sequential bases} in \cite[Definition~7.1]{dBSS16}.

From a purely topological perspective, computable prebases have in particular
the property that compact intersections are open in the topology generated 
by the prebase.
Hence, by Corollary~\ref{cor:presubbase-theorem} computable prebases 
characterize the topology of their spaces up to sequentialization.

\begin{corollary}[Computable prebases]
\label{cor:computable-prebase}
If $X$ is a represented space that has a computable prebase that generates a topology $\tau$,
then $\OO(X)=\seq(\tau)$.
\end{corollary}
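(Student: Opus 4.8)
The plan is to reduce the statement to Corollary~\ref{cor:presubbase-theorem}, which already tells us that $\OO(X)=\seq(\tau_\KK)$, where I write $\tau_\KK$ for the topology generated by the base sets $X$ and $\bigcap_{y\in K}B_y$ over all compact $K\In Y$. Since sequentialization is monotone, it therefore suffices to show that $\tau$ and $\tau_\KK$ coincide as topologies on $X$, where $\tau$ is the topology generated by $B$ as a subbase. The corollary is a purely topological statement, so the computability of the witness will play no role; only its existence as a topological relation will be used.

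First I would check the easy inclusion $\tau\In\tau_\KK$. Each subbase element $B_y$ of $\tau$ is itself a compact intersection, namely $B_y=\bigcap_{z\in\{y\}}B_z$ over the compact singleton $\{y\}$, and hence is a base set of $\tau_\KK$. Consequently every $\tau$-open set is $\tau_\KK$-open, and the inclusion follows.

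The substance of the corollary lies in the reverse inclusion $\tau_\KK\In\tau$, and this is exactly where the prebase property enters. I would use the witness $R:\KK_-(Y)\mto\AA_+(Y)$ purely for its topological content: for every saturated compact $K$ there is a closed $A\in R(K)$ with $\bigcap_{y\in K}B_y=\bigcup_{y\in A}B_y$. The right-hand side is a union of subbase elements of $\tau$, hence $\tau$-open, so every such compact intersection is already open in $\tau$. To cover all compact $K\In Y$ and not merely the saturated ones indexed by $\KK_-(Y)$, I would invoke the observation that $\bigcap_{y\in K}B_y=\{x\in X:K\In B^\T_x\}$ depends only on the saturation of $K$, since each $B^\T_x$ is open; thus the compact intersection over an arbitrary compact $K$ equals the one over its saturation, which lies in $\KK_-(Y)$ and is therefore $\tau$-open. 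The boundary case $K=\emptyset$ gives the base set $X=\bigcup_{y\in Y}B_y$, which is $\tau$-open as well. Hence every generator of $\tau_\KK$ is $\tau$-open, so $\tau_\KK\In\tau$.

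Combining the two inclusions yields $\tau=\tau_\KK$, whence $\OO(X)=\seq(\tau_\KK)=\seq(\tau)$ by Corollary~\ref{cor:presubbase-theorem}, as required. I do not expect a serious obstacle: the only point requiring care is the passage from the saturated compact sets in the domain of $R$ to arbitrary compact sets appearing in $\tau_\KK$, which the saturation argument above resolves cleanly.
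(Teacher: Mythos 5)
Your proposal is correct and matches the paper's (very brief) argument: the paper likewise observes that the prebase property makes every compact intersection $\tau$-open, so that $\tau$ coincides with the topology $\tau_\KK$ of Corollary~\ref{cor:presubbase-theorem}, from which $\OO(X)=\seq(\tau_\KK)=\seq(\tau)$ follows. Your explicit treatment of the saturation issue (via $\bigcap_{y\in K}B_y=\bigcap_{y\in\sat(K)}B_y$) and of the base set $X=\bigcup_{y\in Y}B_y$ just spells out details the paper leaves implicit.
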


In the following we will need a couple of computable operations for the spaces $\OO(X)$, $\AA_+(X)$ and $\KK_-(X)$,
respectively. Most of these operations have been considered before (see~\cite{Pau16}).
We recall that a subset $K\In X$ of a topological space $X$ is called {\em saturated}
if $K=\sat(K):=\bigcap\FF_K$. Here $\sat(K)$ is called the {\em saturation} of $K$.

\begin{proposition}[Computable operations on hyperspaces]
\label{prop:computable-hyperspace}
Let $X,Y$ be represented spaces. Then the following hold:
\begin{enumerate}
\item $\UU:X\to\OO\OO(X)$ is computable \hfill (neighborhood map)
\item $\inj:X\to\AA_+(X),x\mapsto\overline{\{x\}}$ is computable \hfill (closed injection)
\item $\inj:X\to\KK_-(X),x\mapsto\sat\{x\}$ is computable \hfill (compact injection)
\item $\KK_-:\CC(X,Y)\to\CC(\KK_-(X),\KK_-(Y)),f\mapsto(K\mapsto\sat f(K))$ \hfill (compact images)
\item $\AA_+:\CC(X,Y)\to\CC(\AA_+(X),\AA_+(Y)),f\mapsto(A\mapsto\overline{f(A)})$ \hfill (closed images)
\item $\sec:X\times\OO(X\times Y)\to\OO(Y),(x,U)\mapsto \{y\in Y:(x,y)\in U\}$ is computable \hfill (section)
\item $\times:\OO(X)\times\OO(Y)\to\OO(X\times Y),(V,U)\mapsto V\times U$ is computable \hfill (product)
\item $\times:\AA_+(X)\times\AA_+(Y)\to\AA_+(X\times Y),(A,B)\mapsto A\times B$ is computable \hfill (product)
\item $\bigcup:\AA_+\OO(X)\to\OO(X),\AA\mapsto\bigcup\AA$ is computable \hfill (overt union of open)
\item $\bigcap:\KK_-\OO(X)\to\OO(X),\KK\mapsto\bigcap\KK$ is computable \hfill (compact intersection of open)
\item $\bigcup:\AA_+\AA_+(X)\to\AA_+(X),\AA\mapsto\overline{\bigcup\AA}$ is computable \hfill (overt union of overt)
\item $\bigcup:\KK_-\KK_-(X)\to\KK_-(X),\KK\mapsto\sat\bigcup\KK$ is computable \hfill 
(compact union of compact)
\item $\FF:\OO(X)\to\KK_-\OO(X),U\mapsto \FF_U$ is a computable embedding \hfill (compact filter)
\item $\FF:\KK_-(X)\to\OO\OO(X),K\mapsto \FF_K$ is a computable embedding \hfill (open filter)
\item $\TT:\AA_+(X)\to\OO\OO(X),A\mapsto \TT_A$ is a computable embedding \hfill (trace)
\item $\Box:\OO(X)\to\OO\KK_-(X),U\mapsto\{K:K\In U\}$ is a computable embedding \hfill (box)
\item $\Diamond:\OO(X)\to\OO\AA_+(X),U\mapsto\{A:A\cap U\not=\varnothing\}$ is a computable embedding \hfill (diamond)
\item $\CC\OO:\CC(X,Y)\to\OO(\KK_-(X)\times\OO(Y)),f\mapsto\{(K,U)\in\KK_-(X)\times\OO(Y):f(K)\In U\}$\\ is computable (a computable embedding if $Y$ is computable Kolmogorov)\hfill (compact-open)\\
\end{enumerate}
For arbitrary sets $\AA,\KK\In\OO(X)$ we have $\bigcup \AA=\bigcup \overline{\AA}$
and $\bigcap\KK=\bigcap\sat(\KK)$.
\end{proposition}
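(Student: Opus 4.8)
The plan is to treat all fifteen items uniformly by exploiting that each of the hyperspaces $\OO(X)$, $\KK_-(X)$, $\AA_+(X)$ and their iterates is, by construction, represented through continuous characteristic functions into $\IS$: a point of $\KK_-(X)$ carries $\forall_K=\chi_{\FF_K}$, a point of $\AA_+(X)$ carries $\exists_A=\chi_{\TT_A}$, a point of $\OO(X)$ carries $\chi_U$ itself, and a point of $\OO\OO(X)$ carries the characteristic function of a Scott-open set of opens. Consequently every claimed computability statement reduces to building a realizer from a small stock of computable primitives: the evaluation map $\ev:X\times\OO(X)\to\IS$, the preimage map $\CC(X,Y)\times\OO(Y)\to\OO(X),(f,U)\mapsto f^{-1}(U)$, the lattice operations of finite conjunction and countable disjunction on $\IS$, and composition of continuous functions. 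I would record these primitives first (they are standard, see~\cite{Pau16,Sch21}) and then read off each item as a composition.

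Most items are then immediate. For $\UU$ in (1) and the injections (2), (3) the realizer is literally $\ev$, since $\exists_{\overline{\{x\}}}(U)=\chi_U(x)=\forall_{\sat\{x\}}(U)$. The section (6) is the currying of $\chi_U$, and the open product (7) uses $\chi_{V\times U}=\chi_V\wedge\chi_U$. The image operations (4), (5) follow from the equivalences $\sat f(K)\In U\iff K\In f^{-1}(U)$ and $\overline{f(A)}\cap U\neq\emptyset\iff A\cap f^{-1}(U)\neq\emptyset$, which turn them into preimage followed by $\forall_K$ or $\exists_A$. The embeddings $\FF$ in (12) and $\TT$ in (13) merely transcribe the defining representations $\forall_K=\chi_{\FF_K}$ and $\exists_A=\chi_{\TT_A}$, so both directions are realized essentially by the identity on names. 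For the box map (14) one uses $\chi_{\Box(U)}(K)=\forall_K(U)$ in the forward direction and $\chi_U(x)=\chi_{\Box(U)}(\sat\{x\})$ together with (3) for the inverse.

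The remaining non-embedding operations combine the primitives with (1). For the closed product (8) I would pass through the section: by (6) the map $x\mapsto\sec(x,W)$ is continuous, so $V_B:=\{x\in X:B\cap\sec(x,W)\neq\emptyset\}$ is computable in $\OO(X)$ from $B$ and $W$, whence $(A\times B)\cap W\neq\emptyset\iff A\cap V_B\neq\emptyset$ is testable via $\exists_A$. For the overt union (9) one has $x\in\bigcup\AA\iff\AA\cap\UU_x\neq\emptyset$, testable by $\exists_\AA$ after computing $\UU_x$ via (1); for the compact intersection (10) one has $x\in\bigcap\KK\iff\KK\In\UU_x\iff\forall_\KK(\UU_x)$; and for the compact union of compacts (11) one has $\sat\bigcup\KK\In U\iff\KK\In\Box(U)\iff\forall_\KK(\Box(U))$ using (14). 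The two closing identities are purely topological: an open set meets $\AA$ iff it meets $\overline{\AA}$, giving $\bigcup\AA=\bigcup\overline{\AA}$, and a saturated open set contains $\KK$ iff it contains $\sat(\KK)$, giving $\bigcap\KK=\bigcap\sat(\KK)$; these also explain why the $\AA_+$- and $\KK_-$-representations carry enough information.

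The hard part is the compact-open embedding (15), and it is the place where the hypothesis that $Y$ is a computable Kolmogorov space is essentially used. The forward direction is routine: $f(K)\In U\iff K\In f^{-1}(U)\iff\forall_K(f^{-1}(U))$, so $\CC\OO(f)$ is computable from $f$ by preimage and evaluation. The obstacle is the computable inverse, recovering $f$ from the set $\{(K,U):f(K)\In U\}$. Here I would test only the compact sets $K=\sat\{x\}$, which are computable from $x$ by (3), and prove the equivalence $f(\sat\{x\})\In U\iff f(x)\in U$. One implication is immediate from $x\in\sat\{x\}$; for the other I would use that continuous maps are monotone for the specialization preorder, which gives $f(\sat\{x\})\In\sat\{f(x)\}\In U$ because open sets are upward closed and $f(x)\in U$. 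This equivalence lets one compute the neighborhood filter $\UU_{f(x)}=\{U:(\sat\{x\},U)\in\CC\OO(f)\}$ from $\CC\OO(f)$ and $x$, after which the computable partial inverse of $\UU:Y\to\OO\OO(Y)$ supplied by the Kolmogorov property of $Y$ reconstructs $f(x)$ uniformly in $x$. Establishing the specialization-order equivalence cleanly, and checking that the resulting map is a realizer of the correct type for $\CC(X,Y)$, is the main subtlety I expect to spend time on.
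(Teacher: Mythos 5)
Your proposal is correct, and where the paper actually gives proofs it takes the same route you do: the paper only proves items (11)--(15) and the two closing identities (items (1)--(10) are disposed of by citation to Pauly \cite{Pau16}), and there your arguments match its almost line for line --- (11) via $\sat\bigcup\KK\In U\iff\forall_\KK(\Box U)=1$, (12) and (13) as mere transcriptions of $\forall_K=\chi_{\FF_K}$ and $\exists_A=\chi_{\TT_A}$, (14) from the filter map together with the compact injection (3), (15) by testing on the compacts $\sat\{x\}$ and then applying the computable inverse of $\UU:Y\to\OO\OO(Y)$ supplied by the Kolmogorov hypothesis, and the closing identities by evaluating on the neighborhood filters $\UU_x$. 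The structural difference is that you rederive (1)--(10) from the primitives of evaluation, preimage, currying and the lattice operations on $\IS$ rather than citing them; for instance your proof of the closed product (8) through the open set $V_B=\{x\in X:B\cap\sec(x,W)\not=\emptyset\}$ is an argument the paper never writes down. Your version is therefore longer but self-contained, and it has the merit of making explicit the one point the paper compresses in (15): the equivalence $f(\sat\{x\})\In U\iff f(x)\in U$, which you justify by monotonicity of continuous maps for the specialization preorder, is exactly what is hidden behind the paper's terse chain $U\in\UU_{f(x)}\iff f(x)\in U\iff\sat f\{x\}\In U$; you correctly identify this as the only delicate step.
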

\begin{proof}
	The results (1)--(10) can all be found in \cite{Pau16}, see Propositions~4.2, 5.5, 7.4, Section~9 and Corollaries 10.2 and 10.4 therein. We briefly discuss (11)--(18), which are all easy to see.\\
	(13) We have $\FF_U\In\UU\iff U\in\UU$
	     and hence $\forall_{\FF_U}(\UU)=\chi_\UU(U)$. Hence $\FF$ is computable.
	     We also have $x\in U\iff U\in\UU_x$ and hence $\chi_U(x)=\forall_{\FF_U}(\UU_x)$,
	     which proves that $\FF$ is a computable embedding.\\
	(14) We have $U\in\FF_K\iff K\In U$
	and hence $\chi_{\FF_K}(U)=\forall_K(U)$.\\
	(15) We have $U\cap\TT_A\not=\varnothing\iff A\cap U\not=\varnothing$
	and hence $\chi_{\TT_A}(U)=\exists_A(U)$.\\
	(16) We have $K\in\Box(U)\iff K\In U\iff U\in\FF_K$. Hence computability of $\FF$ in (14)
	 implies computability of $\Box$. We also have $x\in U\iff\sat\{x\}\In U\iff\sat\{x\}\in\Box(U)$. Hence
	$\Box$ is a computable embedding as $\inj:X\to\KK_-(X)$ is computable by (3).\\
	(17) We have $A\in\Diamond U\iff A\cap U\not=\varnothing\iff U\in\TT_A$. Hence
	computability of $\Diamond$ follows from (15). 
	We also have $x\in U\iff\overline{\{x\}}\In U\iff\overline{\{x\}}\in\Diamond U$.
	Hence $\Diamond$ is a computable embedding as $\inj:X\to\AA_+(X),x\mapsto\overline{\{x\}}$ is computable by (2).\\
	(11) We have $\overline{\bigcup\AA}\cap U\not=\varnothing\iff\bigcup\AA\cap U\not=\varnothing\iff(\exists A\in\AA)\;A\in\Diamond U\iff\exists_\AA(\Diamond U)=1$.
	Hence (11) follows from (17).\\
	(12) We have $\sat\bigcup\KK\In U\iff\bigcup\KK\In U\iff(\forall K\in\KK)\;K\in\Box U\iff\forall_\KK(\Box U)=1$. Hence, (12) follows from (16).\\
	(18) Since $f(K)\In U\iff\sat f(K)\In U$, computability of $\CC\OO$ follows from (1) and (4).
	If $Y$ is a computable Kolmogorov space, then $\UU:Y\to\OO\OO(Y)$ is a computable embedding.
	Since $U\in\UU_{f(x)}\iff f(x)\in U\iff\sat f\{x\}\In U$, it follows that $\CC\OO$ is a computable
	embedding in this case too.\\
	For arbitrary $\AA,\KK\In\OO(X)$ we have
	\begin{itemize}
	\item $x\in\bigcup\overline{\AA}\iff\overline{\AA}\cap\UU_x\not=\varnothing
	\iff\AA\cap\UU_x\not=\varnothing\iff x\in\bigcup\AA$ and
	\item $x\in\bigcap\sat\KK\iff\sat\KK\In\UU_x\iff\KK\In\UU_x\iff x\in\bigcap\KK$,
	\end{itemize}
	which proves the additional remark.
	\end{proof}

Often, we can establish that a computable presubbase is even a computable prebase
because it is already closed under compact intersections.

\begin{lemma}[Closure under compact intersections]
\label{lem:closure-compact-intersection}
Let $X$ be a represented space with a computable presubbase $B:Y\to\OO(X)$.
If there is a computable ${R:\KK_-(Y)\mto Y}$ such that 
$\bigcap_{y\in K}B_y=B_{z}$ for every $z\in R(K)$, then $B$ is a computable prebase.
\end{lemma}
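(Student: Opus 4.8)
The plan is to verify the definition of a computable prebase (Definition~\ref{def:computable-base}) directly, by exhibiting the required computable multivalued map $R:\KK_-(Y)\mto\AA_+(Y)$. We are given a computable problem $R_0:\KK_-(Y)\mto Y$ with $\bigcap_{y\in K}B_y = B_z$ for every $z\in R_0(K)$, and we must produce a computable map whose values are \emph{closed} subsets $A\in\AA_+(Y)$ satisfying $\bigcap_{y\in K}B_y = \bigcup_{y\in A}B_y$, together with the covering condition $X=\bigcup_{y\in Y}B_y$.

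First I would reduce the problem to the singleton case. Given $K\in\KK_-(Y)$, the hypothesis yields a point $z\in R_0(K)$ with $\bigcap_{y\in K}B_y = B_z$. The natural candidate is to take $A:=\overline{\{z\}}$, the closure of the singleton. By Proposition~\ref{prop:computable-hyperspace}(2), the map $\inj:Y\to\AA_+(Y),\,z\mapsto\overline{\{z\}}$ is computable, so composing $R_0$ with $\inj$ gives a computable $R:=\inj\circ R_0:\KK_-(Y)\mto\AA_+(Y)$. It remains to check that $A=\overline{\{z\}}$ has the right union property.

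The key step is to show $\bigcup_{y\in\overline{\{z\}}}B_y = B_z$, i.e.\ that enlarging the single index $z$ to its closure does not enlarge the union. The inclusion $B_z\In\bigcup_{y\in\overline{\{z\}}}B_y$ is immediate. For the reverse inclusion, I would use that each $B_y$ is open in $X$ and that the presubbase map $B:Y\to\OO(X)$ is continuous (being a computable presubbase, $B$ is computable, hence continuous). If $y\in\overline{\{z\}}$, then $y$ is a limit point of $z$ in the specialization/topological sense of $Y$, and continuity of $B$ together with the structure of the Scott topology on $\OO(X)$ forces $B_y\In B_z$: every open set $B_z$ through which the ``value at $z$'' passes must also contain the value at any point in the closure of $z$, since $\OO(X)$ carries the Scott topology whose specialization order is inclusion. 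Concretely, for $x\in B_y$ one shows $x\in B_z$ by noting $B_y\In\overline{\{B_z\}}$ in $\OO(X)$ forces $B_y\In B_z$. This yields $\bigcup_{y\in\overline{\{z\}}}B_y=B_z=\bigcap_{y\in K}B_y$, as required. Finally, the covering condition $X=\bigcup_{y\in Y}B_y$ can be arranged by taking $K=\emptyset$ (or is assumed to hold for a presubbase of $X$); $\bigcap_{y\in\emptyset}B_y=X$ gives $X$ as a prebase element directly.

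The main obstacle I anticipate is the topological argument that $y\in\overline{\{z\}}$ implies $B_y\In B_z$. This relies on correctly matching the specialization order of $Y$ with the inclusion order induced by $B^\T$, using that $B^\T:X\to\OO(Y)$ is injective (so the topology on $Y$ relevant here is tied to the presubbase structure) and that $\OO(X)$ is equipped with the Scott topology by Corollary~\ref{cor:Scott}. One must be careful that ``closed in $\AA_+(Y)$'' is interpreted via the overt representation $\exists_A$, and verify that $\bigcup_{y\in A}B_y$ for $A=\overline{\{z\}}$ is computed correctly by Proposition~\ref{prop:computable-hyperspace}(9), the overt union of open sets. Everything else is bookkeeping with the established computability of $\inj$ and of overt unions.
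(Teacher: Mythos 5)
Your proof is correct and takes essentially the same route as the paper: the paper's own (two-line) proof likewise composes the given $R$ with the computable closed injection $\inj:Y\to\AA_+(Y),z\mapsto\overline{\{z\}}$ and concludes via the identity $\bigcup_{y\in\overline{\{z\}}}B_y=B_z$, which it attributes to Proposition~\ref{prop:computable-hyperspace}. Your explicit justification of that identity (continuity of $B$ together with the fact that the specialization order of the Scott topology on $\OO(X)$ is inclusion, so $B_y\in\overline{\{B_z\}}$ forces $B_y\In B_z$) and your treatment of the covering condition via $K=\emptyset$ merely spell out details the paper leaves implicit.
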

\begin{proof}
This follows as $\inj:Y\to\AA_+(Y),z\mapsto\overline{\{z\}}$ is computable and
$\bigcup_{y\in\overline{\{z\}}}B_y=B_z$ by Proposition~\ref{prop:computable-hyperspace}.
\end{proof}

Every computable presubbase can be converted into a computable prebase just by
taking the closure under compact intersections. 

\begin{proposition}[Computable prebases]
\label{prop:computable-prebase}
Let $X$ be a represented space with a computable presubbase $B:Y\to\OO(X)$.
Then
\[\cap_\KK B:\KK_-(Y)\to\OO(X),K\mapsto\bigcap_{y\in K}B_y,\]
is a computable prebase with the understanding that the empty intersection is $X$.
\end{proposition}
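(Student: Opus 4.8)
The plan is to verify first that $C:=\cap_\KK B$ is itself a computable presubbase of $X$, and then to upgrade it to a computable prebase by invoking Lemma~\ref{lem:closure-compact-intersection} with $Y$ replaced by the index set $\KK_-(Y)$. Recall that $C_K=\bigcap_{y\in K}B_y$ for $K\in\KK_-(Y)$, with $C_\emptyset=X$ by convention.

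The first step is to compute the transpose of $C$. For $x\in X$ the condition $x\in C_K$ is equivalent to $K\In B^\T_x$, so
\[
C^\T(x)=\{K\in\KK_-(Y):x\in C_K\}=\{K\in\KK_-(Y):K\In B^\T_x\}=\Box(B^\T_x),
\]
which exhibits the factorization $C^\T=\Box\circ B^\T$. By hypothesis $B^\T:X\to\OO(Y)$ is a computable embedding, and by Proposition~\ref{prop:computable-hyperspace}(14) the box map $\Box:\OO(Y)\to\OO\KK_-(Y)$ is a computable embedding; since a composition of computable embeddings is again a computable embedding, $C^\T$ is one as well. Hence $C$ is a computable presubbase of $X$.

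It remains to supply the computable closure operation required by Lemma~\ref{lem:closure-compact-intersection}. For $\KK\in\KK_-(\KK_-(Y))$ I would set $R(\KK):=\sat\bigcup\KK$, which is a total computable map $\KK_-(\KK_-(Y))\to\KK_-(Y)$ by Proposition~\ref{prop:computable-hyperspace}(11). Unwinding the definitions gives
\[
\bigcap_{K\in\KK}C_K=\bigcap_{y\in\bigcup\KK}B_y=\bigcap_{y\in\sat\bigcup\KK}B_y=C_{\sat\bigcup\KK},
\]
so that $\bigcap_{K\in\KK}C_K=C_z$ for every $z\in R(\KK)$, and Lemma~\ref{lem:closure-compact-intersection} then yields that $C$ is a computable prebase. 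Evaluating $R$ at $\KK=\emptyset$ gives $R(\emptyset)=\emptyset$ and $C_\emptyset=X$, which simultaneously secures the covering condition $X=\bigcup_{K\in\KK_-(Y)}C_K$.

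The one step demanding care is the middle equality in the last display: replacing $\bigcup\KK$ by its saturation must not alter the intersection. This is where I expect the only genuine subtlety to lie, and it is resolved by openness of the presubbase sets. Indeed, for any $S\In Y$ the statement $x\in\bigcap_{y\in S}B_y$ says precisely that $B^\T_x$ is an \emph{open} set containing $S$, and an open set contains $S$ if and only if it contains $\sat S$; applying this with $S=\bigcup\KK$ gives the claimed identity. Once this is in place, the rest is bookkeeping with the hyperspace operations of Proposition~\ref{prop:computable-hyperspace} and the fact that $\KK_-(Y)$ consists of saturated compacta.
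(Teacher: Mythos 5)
Your proof is correct and takes essentially the same route as the paper: first show that $\cap_\KK B$ is a computable presubbase, then apply Lemma~\ref{lem:closure-compact-intersection} with the computable map $\KK\mapsto\sat\bigcup\KK$ from Proposition~\ref{prop:computable-hyperspace}~(11) together with the identity $\bigcap_{K\in\KK}\bigcap_{y\in K}B_y=\bigcap_{y\in\sat(\bigcup\KK)}B_y$. The only difference is cosmetic: you obtain the embedding property of $(\cap_\KK B)^\T$ by factoring it as $\Box\circ B^\T$ and citing Proposition~\ref{prop:computable-hyperspace}~(14), whereas the paper inlines exactly that argument via $\inj:Y\to\KK_-(Y)$ and the saturation-irrelevance of compact intersections (which you, commendably, justify explicitly via openness of $B^\T_x$).
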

\begin{proof}
We use Proposition~\ref{prop:computable-hyperspace}.
Firstly, we note that $\cap_\KK B(\varnothing)=X$.
Next, we need to prove that $\cap_\KK B$ is a computable presubbase, i.e.,
that 
\[(\cap_\KK B)^\T:X\to\OO\KK_-(Y),x\mapsto\{K\in\KK_-(Y):x\in\cap_{y\in K}B_y\}\]
is well-defined and a computable embedding.
Since $B^\T$ is a computable embedding, it follows 
that $B$ is computable and hence so is $\cap_\KK B$.
Hence $(\cap_\KK B)^\T$ also is well-defined and computable.
Now, given $(\cap_\KK B)^\T_x$, we can compute $B^\T_x=\{y\in Y:x\in B_y\}\in\OO(Y)$,
as $\inj:Y\to\KK_-(Y),y\mapsto\sat\{y\}$ is computable 
and the saturation is irrelevant as $\cap_{y\in K}B_y=\cap_{y\in\sat(K)}B_y$.
That is $B^\T_x=\inj^{-1}((\bigcap_\KK B)^\T_x)$.
As $B^\T$ is a computable embedding, this implies that $(\cap_\KK B)^\T$ is a computable embedding too.

Finally, we need to prove that $\cap_\KK B$ is a computable prebase,
i.e., that all compact intersections of it can be obtained computably as overt unions.
To this end, we prove that $\cap_\KK\cap_\KK B:\KK_-\KK_-(Y)\to\OO(X)$ is computable.
Since $\bigcup:\KK_-\KK_-(Y)\to\KK_-(Y),\KK\mapsto\sat\bigcup\KK$ is computable and
\[\bigcap_{K\in\KK}\bigcap_{y\in K}B_y=\bigcap_{y\in\bigcup\KK}B_y=\bigcap_{y\in\sat(\bigcup\KK)}B_y,\]
the claim follows with Lemma~\ref{lem:closure-compact-intersection}.
\end{proof}

In fact, every computable Kolmogorov space has a computable base, namely simply the identity. This corresponds to the fact in classical topology that every topology is a base of itself.

\begin{proposition}[Identity as a base]
	\label{prop:identity-base}
	Let $X$ be a represented space. Then the identity $\id:\OO(X)\to\OO(X)$ is a computable base/prebase/presubbase of $X$
	if and only if $X$ is a computable Kolmogorov space.
\end{proposition}
\begin{proof}
	It follows from $\id^\T=\UU:X\to\OO\OO(X)$ that $\id:\OO(X)\to\OO(X)$ is a computable presubbase if and only 
	if $X$ is a computable Kolmogorov space. If $\id$ is a computable presubbase,
	then it is automatically a computable base by Lemma~\ref{lem:closure-compact-intersection}
	and Proposition~\ref{prop:computable-hyperspace}.
\end{proof}

Altogether, we obtain the following characterization of computable Kolmogorov spaces in terms of their bases.

\begin{theorem}[Computable Kolmogorov spaces and bases]
	\label{thm:Kolmogorov-bases}
	Let $X$ be a represented space. Then the following are pairwise equivalent:
	\begin{enumerate}
		\item $X$ is a computable Kolmogorov space.
		\item $X$ has a computable presubbase.
		\item $X$ has a computable prebase.
		\item $X$ has a computable base.
		\item $\id:\OO(X)\to\OO(X)$ is a computable base of $X$.
	\end{enumerate}
\end{theorem}
\begin{proof}
	Proposition~\ref{prop:identity-base} shows that (1)$\iff$(6).
	By Proposition~\ref{prop:general-computable-base} it is also clear that we obtain (5)$\TO$(4)$\TO$(3)$\TO$(2) holds. 
	Finally, (2)$\TO$(1) follows from Corollary~\ref{cor:presubbase-theorem}.
\end{proof}

By a {\em represented $\T_0$ space} we mean a represented space $X$
whose final topology is $\T_0$.
We note that another consequence of the above results is that every
represented $\T_0$ space $X$ can be converted into a computable Kolmogorov
space. This is because $\id:\OO(X)\to\OO(X)$ is always a presubbase.
In fact, this conversion preserves the topology and the equivalence class of its representation. For two representations $\delta_1$ of a set $X_1$ and 
$\delta_2$ of a set $X_2$ we write $\delta_1\leq\delta_2$ if $X_1\In X_2$
and the representation $\delta_1$ can be computably reduced to $\delta_2$, i.e., 
if the identity $\id:X_1\to X_2$ is computable.

\begin{theorem}[Represented spaces as computable Kolmogorov spaces]
	\label{thm:represented-Kolmogorov}
	Let $X$ be a represented space with a $\T_0$ topology $\OO(X)$.
	Then we can endow $X$ with another representation that
	turns $X$ into a computable Kolmogorov space without changing
	the topology $\OO(X)$ (and without changing the computable equivalence class of the representation of $\OO(X)$).
\end{theorem}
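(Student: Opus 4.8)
The plan is to use the observation, already made in the text preceding the statement, that the identity map $\id:\OO(X)\to\OO(X)$ is always a presubbase of $X$ whenever $\OO(X)$ is $\T_0$. Concretely, I would take $Y:=\OO(X)$ as the index space (a represented space via the standard Scott representation $\delta_{\OO(X)}$) and consider the family $(B_U)_{U\in\OO(X)}$ with $B_U:=U$. Its transpose is exactly the neighbourhood map $B^\T=\UU:X\to\OO\OO(X),\,x\mapsto\{U\in\OO(X):x\in U\}$, which is well-defined and injective precisely because $\OO(X)$ is $\T_0$. Thus $B=\id$ is a genuine presubbase of the set $X$.

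Having identified this presubbase, I would apply the Presubbase Theorem (Theorem~\ref{thm:presubbase}) to the new representation $\delta^B$ defined by $\delta^B=(B^\T)^{-1}\circ\delta_{\OO\OO(X)}=\UU^{-1}\circ\delta_{\OO\OO(X)}$. That theorem directly yields that $(X,\delta^B)$ is a computable Kolmogorov space, which is the first assertion. So re-representing $X$ by $\delta^B$ achieves the conversion claimed in the theorem.

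The remaining work is to verify the two invariance clauses, which is where the real (though still light) content lies. First, I would show that the \emph{topology} $\OO(X)$ is unchanged. By Theorem~\ref{thm:presubbase} the final topology of $\delta^B$ is the sequentialization of the topology $\tau$ generated by the sets $\bigcap_{U\in K}U$ over compact $K\In\OO(X)$. Since each such intersection of open sets need not be open in the original topology, one cannot expect $\tau$ itself to equal $\OO(X)$; instead I would argue that $\OO(X)$, being the final topology of a represented space, is already sequential, and that $\delta^B$ is topologically equivalent to the original representation. Indeed $\UU:X\to\OO\OO(X)$ is computable for the original representation by Proposition~\ref{prop:computable-hyperspace}(1), so $\id:(X,\delta_X)\to(X,\delta^B)$ is computable; conversely the fact that $X$ carries a $\T_0$ final topology, together with admissibility of $\delta^B$ with respect to $\seq(\tau)$ and Schr\"oder's Theorem~\ref{thm:Schroder-admissible}, forces $\seq(\tau)=\OO(X,\delta^B)$, and one checks these coincide with $\OO(X)$. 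The cleanest route is to show both representations are continuous with respect to $\OO(X)$ and then invoke that admissible representations for the same (sequential, $\T_0$) topology are topologically equivalent.

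The second clause—that the computable equivalence class of the representation of $\OO(X)$ is preserved—follows because the \emph{hyperspace} representation $\delta_{\OO(X)}$ depends only on $\OO(X)$ as a topology (via the Scott topology and Corollary~\ref{cor:Scott}), and we have just seen the topology is unchanged; hence the same $\delta_{\OO(X)}$ serves for both the old and new representations of $X$. The main obstacle I anticipate is the topology-preservation clause: one must be careful that passing through the compact-intersection topology $\tau$ and its sequentialization genuinely returns the \emph{same} open sets, rather than merely an admissibly-equivalent topology; the key leverage is that $\OO(X)$ is already sequential (as the final topology of a represented space) so that $\seq$ does not enlarge it, combined with Schr\"oder's admissibility theorem pinning the topology down as an equivalence-class invariant.
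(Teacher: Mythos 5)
Your construction is exactly the paper's: the presubbase $\id:\OO(X)\to\OO(X)$ with transpose $\UU$ (well-defined and injective by the $\T_0$ assumption), the representation $\delta^\bullet:=\UU^{-1}\circ\delta^{\circ\circ}$ (where $\delta^\circ,\delta^{\circ\circ}$ denote the induced representations of $\OO(X),\OO\OO(X)$), and the Presubbase Theorem~\ref{thm:presubbase} to conclude that $X^\bullet:=(X,\delta^\bullet)$ is a computable Kolmogorov space; your reduction $\delta_X\leq\delta^\bullet$ via computability of $\UU$ is also the paper's first step. However, both invariance clauses --- which you rightly identify as the real content --- are left with genuine gaps. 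For the topology clause, your reduction only gives $\OO(X^\bullet)\In\OO(X)$; the missing inclusion $\OO(X)\In\OO(X^\bullet)$ is \emph{literally equivalent} to ``$\delta^\bullet$ is continuous with respect to $\OO(X)$'', so your ``cleanest route'' assumes part of what is to be proven, and the equivalence it then invokes is both unavailable and false: $\delta_X$ is not assumed admissible, and in general $\delta^\bullet\not\leqt\delta_X$. The paper's own example makes this concrete: for the decimal representation $\rho$ of $\IR$ (Euclidean, hence $\T_0$, final topology), $\rho^\bullet$ is equivalent to the Cauchy representation, which does \emph{not} reduce continuously to $\rho$; so the two representations of $X$ are topologically inequivalent even though their final topologies agree, and no argument establishing their equivalence can succeed. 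The inclusion $\OO(X)\In\OO(X^\bullet)$ needs its own argument. The paper's is an evaluation argument: given a $\delta^\bullet$-name $p$ of $x$ (so $\delta^{\circ\circ}(p)=\UU_x$) and a $\delta^\circ$-name $q$ of $U\in\OO(X)$, one can semi-decide $x\in U\iff\delta^\circ(q)\in\delta^{\circ\circ}(p)$, which shows at once $\delta^\circ\leq\delta^{\bullet\circ}$ and $\OO(X)\In\OO(X^\bullet)$. (Alternatively, the purely topological inclusion follows from the observation, absent from your sketch, that each $U\in\OO(X)$ equals the compact intersection $\bigcap_{V\in\{U\}}V$, so $\OO(X)\In\tau\In\seq(\tau)=\OO(X^\bullet)$; but that alone does not yield the uniform reduction needed below.)

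For the parenthetical clause your argument is a non sequitur: $\delta_{\OO(X)}$ is \emph{not} determined by the topology $\OO(X)$; it is the function-space representation of $\CC(X,\IS)$ and therefore depends on the representation of $X$, so $\delta_X$ and $\delta^\bullet$ a priori induce different representations $\delta^\circ$ and $\delta^{\bullet\circ}$ of the same set of open sets. Corollary~\ref{cor:Scott} pins these down only up to \emph{continuous} equivalence (both being admissible for the same Scott topology once the underlying topologies are known to agree), whereas the theorem asserts preservation of the \emph{computable} equivalence class. The paper proves this directly: $\delta^{\bullet\circ}\leq\delta^\circ$ follows by precomposing realizers with the computable identity witnessing $\delta_X\leq\delta^\bullet$, and $\delta^\circ\leq\delta^{\bullet\circ}$ is exactly the evaluation argument above. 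So the evaluation step is the one missing idea in your proposal; it closes both gaps simultaneously.
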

\begin{proof}
	Let $(X,\delta)$ be a represented space with $\T_0$ topology $\OO(X)$
	and canonical representation $\delta^\circ$ of $\OO(X)$ and $\delta^{\circ\circ}$ of $\OO\OO(X)$.
	Then $\id:\OO(X)\to\OO(X)$ is a presubbase of this space, 
	as $\id^\T=\UU:X\to\OO\OO(X)$ is well-defined and injective.
	Hence, the presubbase representation $\delta^{\bullet}=\UU^{-1}\circ\delta^{\circ\circ}$
	turns $X$ into a computable Kolmogorov space $X^\bullet$ by the Presubbase Theorem~\ref{thm:presubbase}. Let $\delta^{\bullet\circ}$ denote the 
	representation of the open sets $\OO(X^\bullet)$ induced by $\delta^\bullet$. 
	We need to prove that $\OO(X)=\OO(X^\bullet)$ and that $\delta^\circ\equiv\delta^{\bullet\circ}$.
	Since $\UU$ is computable, it is clear that $\delta\leq\delta^\bullet$ and 
	hence $\delta^{\bullet\circ}\leq\delta^\circ$ and $\OO(X^\bullet)\In\OO(X)$. We need to prove the inverse reduction.
	If $x=\delta^\bullet(p)$ and $U=\delta^\circ(q)$ are given by $p,q\in\IN^\IN$, 
	then we have
	$x\in U\iff U\in\UU_x\iff\delta^{\circ}(q)\in\delta^{\circ\circ}(p)$ which can be
	confirmed with the help of $p,q\in\IN^\IN$. This shows $\delta^\circ\leq\delta^{\bullet\circ}$ and $\OO(X)\In\OO(X^\bullet)$.
\end{proof}

In fact, $\delta\mapsto\delta^\bullet$ is a closure operator in the lattice of 
representations of $\T_0$ spaces that was introduced and studied by Schröder~\cite{Sch02c}. We will get back to discussing this closure operator
in the Epilogue.

\section{Special Bases and Spaces}

In this section we want to discuss some special types of bases and
characterizations of special types of spaces by bases.
The remainder of this article is independent of the results of this section.
Firstly, we introduce two notions of bases that  
were considered in the countable case in~\cite{BR26}.
A {\em Lacombe base} reflects the
fact that open sets can be written as unions of base sets.
A {\em Nogina base} reflects
the fact that for open sets and points therein we can find base sets in between. 
We call a map $f:X\to Y$ {\em computably surjective} if it is computable, surjective
and has a multivalued computable inverse $F:Y\mto X$.

\begin{definition}[Lacombe and Nogina bases]
Let $X,Y$ be represented spaces with a map $B:Y\to\OO(X)$. Then
\begin{enumerate}
	\item $B$ is called
	a {\em computable Lacombe base} of $X$ if 
	\[\bigcup B:\AA_+(Y)\to\OO(X),A\mapsto\bigcup_{y\in A}B_y\]
	is computably surjective. We use $\bigcup_{y\in\varnothing}B_y=\varnothing$.
	\item $B$ is called a {\em computable Nogina base} of $X$ if
	\[N:\In X\times\OO(X)\mto Y,(x,U)\mapsto\{y\in Y:x\in B_y\In U\}\]
	with $\dom(N):=\{(x,U):x\in U\}$ is well-defined and computable.
\end{enumerate}
\end{definition}

A definition similar to that of a Lacombe base has also been considered in a constructive
setting before~\cite[Definition~6.3]{BL12}.
It is easy to see that the mere existence of computable Lacombe or Nogina bases is not a
particularly interesting property as every represented space has such bases.

\begin{proposition}[Identity as Lacombe and Nogina base]
	\label{prop:identity-base-Lacombe-Nogina}
	Let $X$ be a represented space. Then the identity $\id:\OO(X)\to\OO(X)$ is a computable Lacombe base and a computable Nogina base of $X$.
\end{proposition}
\begin{proof}
	It is obvious that $\id:\OO(X)\to\OO(X)$ is a computable Nogina base
	and it is easy to see that it also is a computable Lacombe base.
	This follows from Proposition~\ref{prop:computable-hyperspace} since
	$\inj:\OO(X)\to\AA_+\OO(X),U\mapsto\overline{\{U\}}$ is computable
	and $\bigcup\overline{\{U\}}=U$.
\end{proof}

It is more interesting to ask how different base properties for a given $B:Y\to\OO(X)$
are logically related to each other. We will prove a number of implications,
some of which only hold for specific types of spaces $Y$ or $X$.
It is a consequence of Proposition~\ref{prop:computable-hyperspace}
that every computable Lacombe base is a computable base,
provided the space is a computable Kolmogorov space.

\begin{proposition}[Computable Lacombe bases]
\label{prop:general-computable-base}
Let $X$ be a computable Kolmogorov space.
Then every computable Lacombe base of $X$ is also a computable base.
\end{proposition}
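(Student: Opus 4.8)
The plan is to verify the three ingredients that make the given computable Lacombe base $B:Y\to\OO(X)$ a computable base in the sense of Definition~\ref{def:computable-base}: that $B$ is a computable presubbase (i.e.\ that $B^\T$ is a computable embedding), that it satisfies the prebase condition, and that it is a base. First I would unpack the hypothesis. Computable surjectivity of $\bigcup:\AA_+(Y)\to\OO(X),A\mapsto\bigcup_{y\in A}B_y$ means that this map is computable, surjective, and admits a computable multivalued inverse $G:\OO(X)\mto\AA_+(Y)$ with $\bigcup_{y\in A}B_y=U$ for every $A\in G(U)$. The map $B$ itself is computable (as is implicit in writing $B:Y\to\OO(X)$ for represented spaces, and in any case $B=\bigcup\circ\inj$ for $\inj:Y\to\AA_+(Y),y\mapsto\overline{\{y\}}$, using that $\bigcup_{z\in\overline{\{y\}}}B_z=B_y$ since $B$ preserves the specialization order). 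Hence, by the remark following Definition~\ref{def:computable-presubbase}, the transpose $B^\T:X\to\OO(Y)$ is well-defined and computable.

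Two of the three ingredients I would then dispatch quickly. Surjectivity of $\bigcup$ says every $U\in\OO(X)$ has the form $\bigcup_{y\in A}B_y$, so the sets $B_y$ form a base of $\OO(X)$ and, taking $U=X$, we get $X=\bigcup_{y\in Y}B_y$. For the prebase condition I would take $R:=G\circ(\cap_\KK B)$, where $\cap_\KK B:\KK_-(Y)\to\OO(X),K\mapsto\bigcap_{y\in K}B_y$ is computable by items~(4) and~(10) of Proposition~\ref{prop:computable-hyperspace}, exactly as in the proof of Proposition~\ref{prop:computable-prebase}. Then $R$ is computable, and for $A\in R(K)$ the defining property of $G$ gives $\bigcup_{y\in A}B_y=\bigcap_{y\in K}B_y$, which is precisely the required identity.

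The main step, and the only genuine obstacle, is to show that $B^\T$ is a computable \emph{embedding}, i.e.\ that $x$ can be recovered computably from $B^\T_x=\{y\in Y:x\in B_y\}$. Since $X$ is a computable Kolmogorov space, $\UU:X\to\OO\OO(X)$ is a computable embedding, so it suffices to compute the neighbourhood filter $\UU_x=\{U\in\OO(X):x\in U\}$ from $B^\T_x$ and then apply the computable partial inverse of $\UU$. The key observation is that for $U=\bigcup_{y\in A}B_y$,
\[x\in U\iff\exists y\in A\ (x\in B_y)\iff A\cap B^\T_x\neq\emptyset\iff\exists_A(B^\T_x)=1.\]
Thus, to test membership of $x$ in a given $U$, I would apply $G$ to $U$ to obtain some $A\in\AA_+(Y)$ with $\bigcup_{y\in A}B_y=U$ and evaluate $\exists_A$ on $B^\T_x$; the result equals $[x\in U]$ independently of the choice of $A\in G(U)$. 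This gives a realizer that, with oracle $B^\T_x$, computes the characteristic function $U\mapsto[x\in U]$ of $\UU_x$, hence computes $\UU_x$ from $B^\T_x$; composing with $\UU^{-1}$ recovers $x$. Therefore $B^\T$ is a computable embedding and $B$ a computable presubbase.

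Combining the three parts, $B$ is a computable presubbase that satisfies the prebase condition and is a genuine base, hence a computable base. The crux is the displayed equivalence: computable surjectivity is exactly what lets us rewrite an arbitrary open set as an overt union of prebase elements, converting the topological test ``$x\in U$'' into the computable trace evaluation $\exists_A(B^\T_x)$ and thereby effectivizing the recovery of a point from its transpose.
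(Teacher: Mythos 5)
Your proof is correct and takes essentially the same route as the paper's own: computability of $B$ via $\inj:Y\to\AA_+(Y)$ and the identity $\bigcup_{z\in\overline{\{y\}}}B_z=B_y$, the prebase condition by composing compact images and compact intersections with the computable right inverse of $\bigcup$, the base condition from surjectivity, and the embedding property of $B^\T$ by reducing the test $x\in U$ to $\exists_A(B^\T_x)$ for $A$ obtained from the inverse, then invoking the computable Kolmogorov property of $X$. The only informal spot (justifying $\bigcup_{z\in\overline{\{y\}}}B_z=B_y$, hence computability of $B$ itself) is present at the same level of rigor in the paper's proof, so nothing is missing relative to it.
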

\begin{proof}
We use Proposition~\ref{prop:computable-hyperspace}.
If $\bigcup B:\AA_+(Y)\to\OO(X)$ is computable, then so is the map $B:Y\to\OO(X)$.
This is because $\inj:Y\to\AA_+(Y),y\mapsto\overline{\{y\}}$ is computable
and $\bigcup_{z\in\overline{\{y\}}}B_z=B_y$.
We also have that $\bigcap:\KK_-\OO(X)\to\OO(X)$ is computable 
and $\bigcup B:\AA_+(Y)\to\OO(X)$ has a computable right inverse $S:\OO(X)\mto\AA_+(Y)$.
Hence, given a set $K\in\KK_-(Y)$ we can compute $\KK:=\sat B(K)=\sat\{B_y:y\in K\}\in\KK_-\OO(X)$
and hence $\bigcap\KK=\bigcap_{y\in K}B_y\in\OO(X)$.
Using $S$ we obtain $A\in\AA_+(Y)$ with $\bigcap_{y\in K}B_y=\bigcup_{y\in A}B_y$.
Altogether this describes a computation of a multivalued $R:\KK_-(Y)\mto\AA_+(Y)$ as it is
required for a computable prebase. Because $\bigcup B:\AA_+(Y)\to\OO(X)$ is surjective,  $B$ is actually a base of $X$.
It remains to prove that $B:Y\to\OO(X)$ is a computable presubbase, i.e., that the transpose
$B^\T:X\to\OO(Y)$ is injective and has a computable right inverse. If the set $V=\{y\in Y:x\in B_y\}\in\OO(Y)$
is given, then we can also compute the set $\UU_x=\{U\in\OO(X):x\in U\}\in\OO\OO(X)$.
This is because given $U\in\OO(X)$ we can compute $A\in\AA_+(Y)$ with $U=\bigcup_{y\in A}B_y$
and hence $x\in U\iff(\exists y\in A)\;x\in B_y\iff A\cap V\not=\varnothing$.
But given $\UU_x\in\OO\OO(X)$ we can compute $x\in X$, as $X$ is a computable Kolmogorov space.
This completes the proof that $B:Y\to\OO(X)$ is a computable base of $X$.
\end{proof}

This result generalizes~\cite[Proposition~4.4]{BR26} for the second-countable case.
We call a represented space {\em computably second-countable}, if it
has a computable base $B:\IN\to X$. The following characterization
is well-known~\cite[Theorem~6.1]{BR26}.

\begin{proposition}[Computable second-countability]
\label{prop:computable-second-countable}
Let $X$ be a computable Kolmogorov space.
Then $X$ is computably second-countable if and only if $X$ admits a computably 
equivalent computably open representation.
\end{proposition}

For such spaces every computable base $B:\IN\to X$ 
is even a computable Lacombe base~\cite[Lemma~3.4]{BR26}. 
We can expand the equivalence to a larger
class of index spaces $Y$.

As a preparation, we need to study the representation
$\delta_{\OO(Y)}$ of open subsets of $Y$.
If $q\in\IN^\IN$, then we write for short $n\in q:\iff n\in\range(q)-1\iff(\exists i)\;q(i)=n+1$,
i.e., if $n$ is ``enumerated'' into $q$ (where $0$ serves as a dummy symbol). 
We use an analogous notation $i\in w$ for words $w\in\IN^*$.
Let $\w_i$ be the $i$--th word according
to some standard numbering of $\IN^+=\IN^*\setminus\{\varepsilon\}$, 
the set of words without the empty word $\varepsilon$. 

\begin{proposition}[Synthetic versus final representation]
\label{prop:synthetic-final}
Let $(Y,\delta_Y)$ be a represented space.
If we define a representation $\delta_{\OO(Y)}$ of $\OO(Y)$ by
\[\delta_{\OO(Y)}(q)=V:\iff\delta_Y^{-1}(V)=\dom(\delta_Y)\cap\bigcup_{j\in q}\w_j\IN^\IN\]
for all $q\in\IN^\IN$ and $V\in\OO(Y)$, then 
$\chi:\OO(Y)\to\CC(Y,\IS),U\mapsto \chi_U$
is a computable isomorphism and hence $\delta_{\OO(Y)}$ is computably
equivalent to the synthetic representation of $\OO(Y)$, defined via continuous characteristic
functions $\chi_U:X\to\IS$.
\end{proposition}
\begin{proof}
Given $U\in\OO(Y)$ by some $\delta_{\OO(Y)}$--name $q$ and $y\in Y$ by some
$\delta_Y$--name $p$, we can easily evaluate $\chi_U(y)$: we just search for some $j\in q$
with $\w_j\prefix p$ and if we find one, then we output $1$, otherwise $0$.
By currying, this proves that $\chi$ is computable.
Now given $\chi_U\in\CC(Y,\IS)$, we have a word function $f:\IN^*\to\IN^*$ that
approximates a realizer of $\chi_U$. We just need to search for words $w$ such that
$f(w)$ contains a $1$ and enumerate corresponding $j$ with $\w_j=w$ into $q$.
If it happens that $w$ is the empty word, then we list instead
all words $w$ of length $1$ in $q$.
This proves that the inverse of $\chi$ is computable.
\end{proof}

We note that the representation $\delta_{\OO(Y)}$ 
is not necessarily total, because the enumeration
of certain words $\w_j$ in a name $q$ enforces the enumeration
of other words $\w_k$ in the enumeration $q$
due to extensionality.
However, every word $w\in\IN^*$ can be extended to a name of $Y$ and hence
$w\IN^\IN\cap\dom(\delta_{\OO(Y)})\not=\varnothing$.
We now investigate this representation on cylinders.

\begin{proposition}[Cylinder and filter]
	\label{prop:cylinder-filter}
	Let $(Y,\delta_Y)$ be a represented space and let $\delta_{\OO(Y)}$ be defined as 
	in Proposition~\ref{prop:synthetic-final}.
	Then we obtain 
	\[\delta_{\OO(Y)}(w\IN^\IN)=\FF_{P_w}\text{ with }P_w:=\bigcup_{j\in w}\delta_Y(\w_j\IN^\IN)\]
	for each $w\in\IN^*$, where $\FF_{P_w}=\{U\in\OO(Y):P_w\In U\}$.
\end{proposition}
\begin{proof}
	For $q\in\dom(\delta_{\OO(Y)})$ we obtain
	$V:=\delta_{\OO(Y)}(q)=\bigcup_{j\in q}\delta_Y(\w_j\IN^\IN)$.
	Hence
	\begin{eqnarray*}
		\delta_{\OO(Y)}(w\IN^\IN)
		&=& \{\delta_{\OO(Y)}(q):q\in w\IN^\IN\cap\dom(\delta_{\OO(Y)})\}\\
		&=& \left\{\bigcup_{j\in q}\delta_Y(\w_j\IN^\IN):q\in w\IN^\IN\cap\dom(\delta_{\OO(Y)})\right\}\\
		&=& \left\{\bigcup_{j\in w}\delta_Y(\w_j\IN^\IN)\cup\bigcup_{j\in r}\delta_Y(\w_j\IN^\IN):wr\in w\IN^\IN\cap\dom(\delta_{\OO(Y)})\right\}\\
		&=& \{V\in\OO(Y):P_w\In V\}\\
		&=& \FF_{P_w}.
	\end{eqnarray*}
	Here ``$\supseteq$'' in the second to last step holds, because if $r\in\dom(\delta_{\OO(Y)})$, $w\in\IN^*$ and $V:=\delta_{\OO(Y)}(r)\supseteq P_w$, then $wr\in\dom(\delta_{\OO(Y)})$ and $\delta_{\OO(Y)}(wr)=V$.
\end{proof}

In order to make the dual behavior of a representation and its induced
representation of open subsets formally precise, we define the following notion.

\begin{definition}[Computably locally compact representations]
We call a representation $\delta_Y:\In\IN^\IN\to Y$ of $Y$ {\em computably locally compact}
if 
\[\lambda:\IN^+\to\KK_-(Y),w\mapsto\delta_Y(w\IN^\IN)\] 
is well-defined and computable.
\end{definition}

We recall that a representation $\delta_Y$ is called {\em computably open}
if $\OO(\IN^\IN)\to\OO(Y),U\mapsto\delta_Y(U)$ is well-defined and computable, or equivalently,
if $\IN^+\to\OO(Y),w\mapsto\delta_Y(w\IN^\IN)$ is well-defined and computable.
Now we can formulate the following duality, where we use the dual computability
properties of the filter maps $U\mapsto\FF_U$ and $K\mapsto\FF_K$ from
Proposition~\ref{prop:computable-hyperspace} together with the
fact that finite unions can be computed for open and compact sets.

\begin{corollary}[Duality]
\label{cor:duality}
Let $Y$ be a represented space. Then the following hold:
\begin{enumerate}
	\item If $Y$ has a computably equivalent computably open representation,
	      then $\OO(Y)$ has a computably equivalent computably locally compact representation.
	\item If $Y$ has a computably equivalent computably locally compact representation,
	      then $\OO(Y)$ has a computably equivalent computably open representation.
\end{enumerate}
\end{corollary}

We note that this duality is related to some purely topological
characterizations in~\cite{Sch04,Sch16}.
Now we want to discuss implications of these observations for bases.

\begin{proposition}[Normal form]
	\label{prop:normal-form}
	Let $(X,\delta_X)$ and $(Y,\delta_Y)$ be represented spaces with
	a presubbase $B:Y\to\OO(X)$ and $\delta_{\OO(X)}, \delta_{\OO(Y)}$ defined
	as in Proposition~\ref{prop:synthetic-final},
	where $\delta_X=\delta^B$ is the presubbase representation of $B$.
	Then we obtain
	\[
	U = \bigcup_{i\in p}\bigcap_{y\in K_{\w_i}}B_y.
	\]
	for each $p\in\dom(\delta_{\OO(X)})$ with $U:=\delta_{\OO(X)}(p)$ and sets
	$K_w:=\bigcup_{j\in w}\delta_Y(\w_j\IN^\IN)$ for $w\in\IN^*$.
	In particular, if $\delta_Y$ is computably locally compact and $B$ is a computable base, then $B$ is a Lacombe base.
\end{proposition}
\begin{proof}
	Let $\delta_{\OO(X)}(p)=U$. With $\delta_X=(B^\T)^{-1}\circ\delta_{\OO(Y)}$ we obtain
	with Proposition~\ref{prop:cylinder-filter}
	\[
	U 
	= \delta_X\left(\bigcup_{i\in p}\w_i\IN^\IN\right)
	= \bigcup_{i\in p} (B^\T)^{-1}\circ\delta_{\OO(Y)}(\w_i\IN^\IN)
	= \bigcup_{i\in p}(B^\T)^{-1}(\FF_{K_{\w_i}})
	= \bigcup_{i\in p}\bigcap_{y\in K_{\w_i}}B_y.
	\]
	If $\delta_Y$ is computably locally compact, then we can compute the sets
	$K_{\w_i}\in\KK_-(Y)$ and if $B$ is a computable base, then we can compute
	$A_i\in\KK_-(Y)$ uniformly in $i$ such that $\bigcup_{y\in A_i}B_y=\bigcap_{y\in K_{\w_i}}B_y$.
	Finally, from the sets $A_i$ we can compute the union $A:=\overline{\bigcup_{i\in p}A_i}$ in $\AA_+(Y)$
	by Proposition~\ref{prop:computable-hyperspace}
	and we obtain 
	\[\bigcup_{y\in A}B_y=\bigcup_{y\in \bigcup_{i\in p}A_i}B_y=\bigcup_{i\in p}\bigcup_{y\in A_i}B_y=\bigcup_{i\in p}\bigcap_{y\in K_{\w_i}}B_y=U.\]
	This proves that $B$ is a computable Lacombe base.
\end{proof}

This result allows us to formulate a computable base theorem
for a class of bases with more general index space $Y$. 
We say that $Y$ is {\em computably locally compactly represented}
if $Y$ has a computably equivalent representation that is computably locally compact.
Some obvious examples of such spaces are $\IN$, $\IN\times2^\IN$ and $\IR^n$. 
However, we note that computably locally compactly represented spaces are not necessarily
locally compact themselves. Together with Propositions~\ref{prop:general-computable-base} and \ref{prop:normal-form}
we obtain the following.

\begin{theorem}[Base theorem]
\label{thm:base}
	Let $Y$ be computably locally compactly represented space
	and $X$ a computable Kolmogorov space with a map $B:Y\to\OO(X)$.
	Then the following are equivalent:
	\begin{enumerate}
		\item $B:Y\to\OO(X)$ is a computable base of $X$.
		\item $B:Y\to\OO(X)$ is a computable Lacombe base of $X$.
	\end{enumerate}
\end{theorem}

However, this result looks perhaps more useful than it is.
While we have extended the base theorem from the case of $Y=\IN$ to
a larger class of bases $B:Y\to\OO(X)$ with more general index spaces $Y$, 
we have not extended it to a larger class of spaces $X$.
This is because of the following result.

\begin{theorem}[Computable second-countability]
Let $X$ be a computable Kolmogorov space. Then the following are equivalent:
\begin{enumerate}
	\item $X$ is computably second-countable.
	\item $X$ admits a computable base $B:\IN\to\OO(X)$.
	\item $X$ admits a computable base $B:Y\to\OO(X)$ with a computably locally compactly represented space $Y$.
	\item $\OO(X)$ is computably locally compactly represented.
\end{enumerate}
\end{theorem}
\begin{proof}
The condition (2) is our definition of (1). Clearly, (2) implies (3),
as $\IN$ is computably locally compactly represented.
Also (3) implies (1):
If $Y$ is computably locally compactly represented, and $B:Y\to\OO(X)$ is 
a computable base of $X$, then $B^\T:X\to\OO(Y)$ is a computable embedding.
By Corollary~\ref{cor:duality} the space $\OO(Y)$ has a computably equivalent
computably open representation.
This implies that $\OO(Y)$ is computably second-countable by Proposition~\ref{prop:computable-second-countable} ($\OO(Y)$ is 
always a computable Kolmogorov space).
Since $X$ can be computably embedded into $\OO(Y)$, it is also computably second-countable.
Finally, (1) implies (4) by Corollary~\ref{cor:duality}
and Proposition~\ref{prop:computable-second-countable},
and (4) implies (3) by Proposition~\ref{prop:identity-base} .
\end{proof}

This theorem characterizes the scope of spaces that have computable bases
with computably locally compactly represented index space $Y$.
On the first sight, it seems to make Theorem~\ref{thm:base} less useful.
On the other hand, we will see in Example~\ref{ex:cofinite} that even for computably
second-countable spaces $X$ Theorem~\ref{thm:base} does not
hold for arbitrary index spaces $Y$.

Nogina bases seem to be less suitable for a general treatment of computable
topology, as they refer to points, which might not be easily accessible. 
However, if the index space $Y$ is sufficiently
nice, then every Lacombe base is a Nogina base. If the space $X$ itself is
nice, then every Nogina base yields a Lacombe base in a canonical way.

In order to express this canonical representation, we denote 
for every represented space $(X,\delta)$ by $(X_\bot,\delta_\bot)$ 
a version of the space to which we attach a bottom element $\bot\not\in X$.
That is $X_\bot:=X\cup\{\bot\}$ and $\delta_\bot$ is defined by
\[\delta_\bot(\widehat{0}):=\bot\text{ and }\delta_\bot(0^n1p):=\delta(p)\]
for all $p\in\dom(\delta)$ and $n\in\IN$.
In other words, the element $\bot$ is attached to $X$ such that $\id:X\to X_\bot$
is a computable embedding and $X$ is a c.e.\ open subspace of $X_\bot$.
This yields the least possible one-point compactification of $X$, and its topology
is sometimes called the {\em open extension topology}. For instance, $\{1\}_\bot$
is computably isomorphic to $\IS$.

For every map $B:Y\to\OO(X)$ we define $B_\bot:Y_\bot\to\OO(X)$ as 
extension of $B$ with $B(\bot)=\varnothing$.
It is clear that $B_\bot$ is computable if $B$ is computable, as 
we can map the input to a prefix of a name of $\varnothing$ until it is
clear that the input is different from $\bot$.

\begin{figure}[htb]
	\begin{center}
		\begin{tikzpicture}[scale=0.9,auto=left,thick,every node/.style={fill=blue!00}]
			\node (Base) at   (-3,0) {computable base};
			\node (Nogina) at (3,0) {computable Nogina base};
			\node (Lacombe) at (0,1.5) {computable Lacombe base};    
			\draw[->,dashed] (Lacombe) edge node [right=0.8cm] {\begin{minipage}{3cm}\tiny $Y$ computably\\ quasi-Polish\end{minipage}} (Nogina);
			\draw[->] (Lacombe) edge [bend angle=7, bend left]  (Base);	
			\draw[->,dashed] (Base) edge [bend angle=7, bend left] node [left=0.5cm] {\begin{minipage}{3cm}\tiny $Y$ computably locally\\ compactly represented\end{minipage}} (Lacombe);		
		\end{tikzpicture}
		\caption{Bases $B:Y\to\OO(X)$ for computable Kolmogorov spaces $X$.}
		\label{fig:bases}
	\end{center}
\end{figure}

We say that a represented space $X$ is {\em computably complete}, if its representation
is computably equivalent to a total representation $\delta:\IN^\IN\to X$.
And a represented space $X$ is called a {\em computable quasi-Polish space},
if it is complete and is computable second-countable (i.e., admits a computable basis $B:\IN\to X$).

\begin{proposition}[Nogina and Lacombe bases]
	Let $X$ and $Y$ be represented spaces. 
	\begin{enumerate}
		\item If $B:Y\to\OO(X)$ is a computable Lacombe base of $X$ and $Y$ is a 
		computable quasi-Polish space,
		then $B$ is a computable Nogina base of $X$.
		\item If $B:Y\to\OO(X)$ is a computable Nogina base of $X$ and $X$ is a complete space, then $B_\bot:Y_\bot\to\OO(X)$ is a computable Lacombe base of $X$.
	\end{enumerate}
\end{proposition}
\begin{proof}
	(1) Let $B:Y\to\OO(X)$ be a computable Lacombe base of $X$. Then 
	given some $U\in\OO(X)$ we can compute
	some $A\in\AA_+(Y)$ such that $U=\bigcup_{y\in A}B_y$.
	Since $Y$ is a computable quasi-Polish space, we can compute some
	sequence $(y_n)_n$ in $Y$ with $A=\overline{\{y_n:n\in\IN\}}$ by~\cite[Corollary~21]{BPS20},
	which implies $U=\bigcup_{n\in\IN}B_{y_n}$ (using the statement at the bottom of Proposition~\ref{prop:computable-hyperspace}).
	Now given some $x\in X$ with $x\in U$, we can compute some $n\in\IN$
	with $x\in B_{y_n}\In U$. Hence, $B$ is a computable Nogina base of $X$.\\
	(2) Let $B:Y\to\OO(X)$ be a computable Nogina base and let $X$ be complete.
	We can extend the Nogina operation $N$ to a total computable operation
	\[N:X\times\OO(X)\mto Y_\bot,(x,U)\mapsto
	\left\{\begin{array}{ll}
		\{y\in Y:x\in B_y\In U\} & \text{if $x\in U$}\\
		\bot & \text{otherwise}
	\end{array}\right..
	\]
	Hence, there is a computable function $F:\In\IN^\IN\times\IN^\IN\to Y_\bot$
	such that for every name $p$ of a point $x\in X$ and every name $q$ of an open $U\in\OO(X)$
	the value $F(p,q)$ satisfies $x\in B_\bot(F(p,q))\In U$ if $x\in U$ and $B_\bot(F(p,q))=\varnothing$ otherwise.
	Since $X$ is complete (and hence has a total representation), we obtain by currying 
	a computable function $G:\In\IN^\IN\to\CC(\IN^\IN,Y_\bot)$.
	Hence, given a name $q$ of an open set $U\in\OO(X)$, we can compute
	$A:=\overline{G(q)(\IN^\IN)}\in\AA_+(Y_\bot)$ by Proposition~\ref{prop:computable-hyperspace}
	and we obtain
	\[\bigcup_{y\in A}B_\bot(y)=\bigcup_{y\in F(\IN^\IN\times\{q\})}B_\bot(y)=U.\]
	Hence, $B_\bot:Y_\bot\to\OO(X)$ is a computable Lacombe base of $X$.
\end{proof}

We could weaken the condition on completeness of $X$ even to represented spaces $X$
that admit a representation with an overt domain. In Figure~\ref{fig:bases} some
of the implications between types of bases are displayed.

Next we want to discuss how overt spaces can be characterized in terms of bases.
We recall that a represented space $X$ is called {\em overt} if $X$ is a 
computable point in $\AA_+(X)$, or in other words, if $\OO(X)^\times:=\{U\in\OO(X):U\not=\varnothing\}$
is a c.e.\ open subspace of $\OO(X)$. By $\id^\times:\OO(X)^\times\to\OO(X)$ we denote
the identity restricted to $\OO(X)^\times$. 
We also need to following natural generalization of the finite intersection property.

\begin{definition}[Compact intersection property]
	Let $X$ be a represented space. 
	\begin{enumerate}
		\item We say that $X$ has the {\em compact intersection property}, if $\bigcap K\not=\varnothing$ for all $K\in\KK_-(\OO(X)^\times)$ implies $\bigcap\OO(X)^\times\not=\varnothing$.
		\item We say that $X$ has the {\em computable compact intersection property},
		if $\bigcap K\not=\varnothing$ for all computable $K\in\KK_-(\OO(X)^\times)$ implies
		that there is a computable $x\in\bigcap\OO(X)^\times$.
	\end{enumerate}
\end{definition}

We note that the compact intersection property is satisfied by many spaces,
among them all sober spaces. This is because the premise of the compact intersection
property implies that all pairs of non-empty open subsets of $X$
intersect, which means that $X$ itself is irreducible. If it is sober, then
it follows that $X=\overline{\{x\}}$, which shows the conclusion is satisfied.
On the other hand, if $X$ is not-sober, then $X\sqcup\{0\}$ is also not sober
and yet satisfies the compact intersection property, because the premise is not satisfied.

Now we can prove a characterization of overt Kolmogorov spaces $X$ via certain 
bases that avoid the empty set. We say that
a map $B:Y\to\OO(X)$ {\em avoids the empty set}, if $\range(B)\In\OO(X)^\times$.

\begin{theorem}[Overt spaces and bases]
	\label{thm:Kolmogorov-bases-overt}
	Let $X$ be a computable Kolmogorov space. Then the following are pairwise equivalent:
	\begin{enumerate}
		\item $X$ is overt.
		\item $X$ has a computable Lacombe base that avoids the empty set.
		\item $\id^\times:\OO(X)^\times\to\OO(X)$ is a computable Lacombe base of $X$.
	\end{enumerate}
     If $X$, additionally, satisfies the computable compact intersection property, then the following
	condition is equivalent to each of the aforementioned ones:
	\begin{enumerate}[resume]
		\item $\id^\times:\OO(X)^\times\to\OO(X)$ is a computable base of $X$.
	\end{enumerate}
\end{theorem}
\begin{proof}
	(3)$\TO$(2) is clear.\\
	(2)$\TO$(1) 
	Let $B:Y\to\OO(X)$ be a computable Lacombe base that avoids open sets.
	We need to prove that $X$ is overt.
	Given an open set $U\in\OO(X)$, we can hence compute a set $A\in\AA_+(Y)$
	such that $U=\bigcup_{y\in A}B_y$. Now
	\[U\not=\varnothing\iff A\not=\varnothing\iff A\cap Y\not=\varnothing\]
	and the latter property is c.e.\ in $A$ since $Y$ is c.e.\ open in itself. Hence $X$ is overt.\\
	(1)$\TO$(3). Let $X$ be an overt computable Kolmogorov space.
    Given an open set $U\in\OO(X)$, we can compute the set
    \[A:=\left\{\begin{array}{ll}
    \overline{\{U\}} & \mbox{if $U\not=\varnothing$}\\
    \varnothing & \mbox{otherwise}
    \end{array}\right.\in\AA_+(\OO(X)^\times)\]
    as overtness of $X$ implies that given $\UU\in\OO(\OO(X)^\times)$ the following
    condition is c.e.:
    \[A\cap\UU\not=\varnothing\iff U\not=\varnothing\text{ and }U\in\UU.\] 
    We obtain $\bigcup_{V\in A}\id^\times(V)=U$ and hence $\id^\times$ is a computable Lacombe base.\\
    (3)$\TO$(4) Follows from Proposition~\ref{prop:general-computable-base}.\\
    (4)$\TO$(1) 
    Let us now assume that $X$ has the computable compact intersection property.
    Then there is a computable $x\in\bigcap\OO(X)^\times$ or some computable $K\in\KK_-(\OO(X)^\times)$ such that $\bigcap K=\varnothing$.
    In the first case, $U\in\OO(X)$ satisfies $U\not=\varnothing\iff x\in U$ and hence $X$ is overt. In the second case, we claim that given $U\in\OO(X)$ we can
    compute the set $K\vee U=\sat\{V\cup U:V\in K\}\in\KK_-(\OO(X)^\times)$.
    If $\id^\times$ is a computable base of $X$, then we can compute
    $A\in\AA_+(\OO(X)^\times)$ such that
    $\bigcup A=\bigcap(K\vee U)=U$,
    which implies $U\not=\varnothing\iff A\not=\varnothing$, which is a c.e.\ open
    property, as explained above. Hence, $X$ is overt in this case too.
    We still need to prove the claim about $K\vee U$.
    Firstly, we note that the map $f_U:\OO(X)^\times\to\OO(X)^\times,V\mapsto V\cup U$
    is well-defined and for $\UU\in\OO(\OO(X)^\times)$ we obtain
    \[\forall_{K\vee U}(\UU)\iff(\forall W\in K\vee U)\,W\in\UU\iff
    (\forall V\in K)\,V\cup U\in\UU\iff\forall_K(f_U^{-1}(\UU)).\]
    Finally, $\OO(X)\to\KK_-(\OO(X)^\times),U\mapsto K\vee U$ is computable
    by Proposition~\ref{prop:computable-hyperspace},
    as the function $U\mapsto\CC(\OO(X)^\times,\OO(X)^\times),U\mapsto f_U$ is computable.        
\end{proof}

It is obvious that $\id^\times:\OO(X)^\times\to\OO(X)$ is a computable
Nogina base for every represented space. Hence, the existence of such
a base cannot be used to characterize overt spaces. 
It is also easy to see that every space $X$ with two non-empty disjoint c.e.\ open
subsets $U_1,U_2\in X$ satisfies the computable compact intersection property
(since $K=\sat\{U_1,U_2\}\in\KK_-(\OO(X)^\times)$ is computable then).
This observation yields the following negative result.

\begin{corollary}
	If $X$ is a non-overt computable Kolmogorov space with two disjoint 
	non-empty c.e.\ open subsets,
	then $\id^\times:\OO(X)^\times\to\OO(X)$ is a computable Nogina base,
	which is neither a computable Lacombe base nor a computable base.
\end{corollary}

Every non-c.e.\ subspace $X\In\IN$ is an example of a non-overt computable Kolmogorov space
with two non-empty c.e.\ open subsets.
Also computable subbases that avoid the empty set cannot be used to characterize overtness.
This holds even in the second-countable case with $Y=\IN$.

\begin{example}
Let $a,b\in A\In\IN$ with $a\not=b$ and $B_{2n}:=\{a,n\}$, $B_{2n+1}:=\{b,n\}$ for all $n\in\IN$.
Then $B:\IN\to\OO(\IN),n\mapsto B_n$ is a computable base of $\IN$ and hence
we obtain a computable subbase $B':\IN\to\OO(A),n\mapsto B_n\cap A$ of $A$ (see also Proposition~\ref{prop:constructions-overt-prebases}).
Clearly, $B'$ avoids the empty set, but $A$ is overt if and only if it is c.e.
\end{example}

In the second-countable case, computable bases $B:\IN\to\OO(X)$ that avoid
the empty set can be used to characterize overt spaces~\cite[Proposition~6.5]{BR26}.
We prove that this does not hold for arbitrary bases.
In fact, we will show that the compact intersection property in Theorem~\ref{thm:Kolmogorov-bases-overt}
was in some sense not just sufficient but also necessary.
We proceed in several steps.

\begin{lemma}
\label{lem:emptiness-presubbase}
Let $X$ be a computable Kolmogorov space that satisfies $\bigcap\OO(X)^\times=\varnothing$.
Then $\id^\times:\OO(X)^\times\to\OO(X)$ is a computable presubbase of $X$.
\end{lemma}
\begin{proof}
Since $X$ is a computable Kolmogorov space, it follows that the neighborhood map $\UU:X\to\OO\OO(X)$ is a computable embedding. We need to prove that
\[\UU^\times:X\to\OO(\OO(X)^\times),x\mapsto\UU^\times_x:=\{U\in\OO(X)^\times:x\in U\}\]
is a computable embedding too. We first note that $\OO(X)^\times$ is an open
and hence sequential subspace of $\OO(X)$.
Thus, $\OO(\OO(X)^\times)=\{\VV\cap\OO(X)^\times:\VV\in\OO\OO(X)\}$.
Since the identity ${\iota:\OO(X)^\times\to\OO(X)}$ is a computable embedding,
we obtain that $\UU^\times=\iota^{-1}\circ\UU$ is computable too. 
We need to show
that the partial inverse of $\UU^\times$ is computable.
We consider the representation $\delta_{\OO(\OO(X)^\times)}$ according
to Proposition~\ref{prop:synthetic-final}. Let $q\in\IN^\IN$
be such that $\delta_{\OO(\OO(X)^\times)}(q)=\UU^\times_x$ for some $x\in X$
and let $j\in q$. Then for every $U\in\OO(X)^\times$ \[U\in\delta_{\OO(X)^\times}(\w_j\IN^\IN)=\delta_{\OO(X)}(\w_j\IN^\IN)\cap\OO(X)^\times\TO x\in U.\]
Since $\bigcap\OO(X)^\times=\varnothing$, there is some $U\in\OO(X)^\times$
with $x\not\in U$ and hence $U\not\in\delta_{\OO(X)^\times}(\w_j\IN^\IN)$.
But this implies $\varnothing\not\in\delta_{\OO(X)}(\w_j\IN^\IN)$, since
otherwise $\delta_{\OO(X)}(\w_j\IN^\IN)=\OO(X)$ would follow.
But if $\varnothing\not\in\delta_{\OO(X)}(\w_j\IN^\IN)$ for all $j\in q$,
then $q\in\dom(\delta_{\OO\OO(X)})$ and $\delta_{\OO\OO(X)}(q)=\UU_x$.
But this means that given a name $q$ for $\UU^\times_x$, we actually
already have a name for $\UU_x$ and hence we can compute $x\in X$
and $\UU^\times$ is a computable embedding. 
\end{proof}

From this result it follows that we even obtain a computable base
if the space does not satisfy the compact intersection property.

\begin{proposition}
	\label{prop:compact-intersection}
	If $X$ is a computable Kolmogorov space that does not satisfy the
	compact intersection property, then $\id^\times:\OO(X)^\times\to\OO(X)$
	is a computable base of $X$.
\end{proposition}
\begin{proof}
	That $X$ does not satisfy the compact intersection property implies that
	$\bigcap K\not=\varnothing$ for all $K\in\KK_-(\OO(X)^\times)$ and
	yet $\bigcap\OO(X)^\times=\varnothing$.
	The latter condition implies that $\id^\times$ is a computable presubbase
	by Lemma~\ref{lem:emptiness-presubbase}.
	We prove that $\id^\times$ is even a computable base.
	We claim that the map
	$\bigcap:\KK_-(\OO(X)^\times)\to\OO(X)^\times,K\mapsto\bigcap K$	
	is well-defined and computable.
	Then it follows by Lemma~\ref{lem:closure-compact-intersection}
	that $\id^\times$ is a computable base (since it is obviously also a base).
	We now prove the claim. 
	Firstly, by the non-emptiness assumption we have $\bigcap K\not=\varnothing$
	for all $K\in\KK_-(\OO(X)^\times)$. Hence, for the well-definedness
	we still need to show that $\bigcap K$ is open, which we prove together with 
	the computability of the map.
	We obtain for $K\in\KK_-(\OO(X)^\times)$
	\[x\in\bigcap K\iff(\forall U\in K)\,x\in U\iff(\forall U\in K)\,U\in \UU^\times_x\iff\forall_K(\UU^\times_x)=1,\]
	i.e., $\chi_{\bigcap K}=\forall_K\circ \UU^\times$. 
	This proves that $\bigcap:\KK_-(\OO(X)^\times)\to\OO(X)$ is computable and 
	well-defined.
	As the identity $\iota:\OO(X)^\times\to\OO(X)$ is a computable embedding,
	we can also compute $\bigcap$ with $\OO(X)^\times$ as output space, as claimed.
\end{proof}

Finally, together with Proposition~\ref{thm:Kolmogorov-bases-overt} we obtain
the desired sufficient criterion for $\id^\times$ being a computable base,
but not a computable Lacombe base.

\begin{corollary}
	\label{cor:computable-non-Lacombe}
	If $X$ is a computable Kolmogorov space that is neither overt, nor satisfies
	the compact intersection property, then $\id^\times:\OO(X)^\times\to\OO(X)$
	is a computable base, but not a computable Lacombe base.
\end{corollary}

We still need to prove that there are actually spaces that satisfy the
requirements given in this corollary.
The natural numbers with the cofinite topology are a natural example of a 
space that does not satisfy the compact intersection property. However, this
space is overt. But a non-overt version of this space yields
a counterexample. And the space $X$ in this counterexample
is even computably second-countable.

\begin{example}[Non-overt cofinite space of natural numbers]
\label{ex:cofinite}
We consider the space $X=\IN$ with the cofinite topology,
induced by the base $B:\IN\to \OO(X)$ with
\begin{enumerate}
	\item $B_{2n}:=\IN\setminus\range(\w_n)$,
	\item $B_{2n+1}:=\varnothing$ if $n\not\in A$,
	\item $B_{2n+1}:=\IN$ if $n\in A$
\end{enumerate}
for all $n\in\IN$, where $A\In\IN$ is some set that is not c.e.\ and
$\w:\IN\to\IN^*$ is a standard numbering of words. 
We endow $X$ with the subbase representation $\delta^B$ induced by this subbase.
Then $X$ is a computably second-countable computable Kolmogorov space
that is not overt and does not satisfy the compact intersection property.
Hence, $\id^\times:\OO(X)^\times\to\OO(X)$ is a computable base of $X$
that is not a computable Lacombe base.
\end{example}
\begin{proof}
It is clear that $X$ is a computably second-countable computable Kolmogorov space.
Then $B:\IN\to\OO(X)$ is computable for this space and $B_{2n+1}\not=\varnothing\iff n\in A$. Hence $X$ is not overt. 
We have $\OO(X)^\times=\{U\In\IN:\IN\setminus U\text{ finite}\}$.
Hence, $X$ does not satisfy the finite intersection property in the sense
that $U_1\cap U_2\cap...\cap U_n\not=\varnothing$ for any choice of finitely many $U_1,...,U_n\in\OO(X)^\times$, but $\bigcap\OO(X)^\times=\varnothing$.
Hence, by Corollary~\ref{cor:computable-non-Lacombe}
it suffices to show that $\KK_-(\OO(X)^\times)$ contains only finite sets.
Since $\OO(X)^\times$ is an open subset of $\OO(X)$, it is a sequential subspace
and hence $\OO(\OO(X)^\times)=\{\UU\cap\OO(X)^\times:\UU\in\OO\OO(X)\}$.
Since every subset of $X$ is compact, in particular, the sets $B_{2n}$ are compact
and hence 
\[\UU_n:=\{U\in\OO(X)^\times:B_{2n}\In U\}=\FF_{B_{2n}}\cap\OO(X)^\times\in\OO(\OO(X)^\times)\]
for all $n\in\IN$. The sets $\UU_n$ form an open cover of $\OO(X)^\times$
and hence every compact set $K\in\KK_-(\OO(X)^\times)$ has a finite
subcover of the form $K\In\UU_1\cup...\cup\UU_m$ for some $m\in\IN$.
This implies that $K$ is finite since all the $\UU_n$ are finite. This 
completes the proof.
\end{proof}

\section{Closure properties}

In this section we provide a number of examples of computable presubbases and prebases and we 
demonstrate that these concepts can be used to derive interesting results.
All these results are essentially well-known (mostly due to the work of Schr\"oder~\cite{Sch02c}).
The purpose here is not to claim originality, but to show how these results can be easily
derived using the concepts of bases.
We start with providing a number of examples that show that computable presubbases of hyper and function spaces
occur very naturally.

\begin{proposition}[Computable presubbases]
\label{prop:presubbase-hyperspace-function-space}
Let $X$ be a represented space and $Y$ a computable Kolmogorov space. 
The following are computable presubbases:
\begin{enumerate}
\item $\UU:X\to\OO\OO(X),x\mapsto\{U\in\OO(X):x\in U\}$.
\item $\FF:\KK_-(X)\to\OO\OO(X),K\mapsto\{U\in\OO(X):K\In U\}$.
\item $\Box:\OO(X)\to\OO\KK_-(X),U\mapsto\{K\in\KK_-(X):K\In U\}$.
\item $\Diamond:\OO(X)\to\OO\AA_+(X),U\mapsto\{A\in\AA_+(X):A\cap U\not=\varnothing\}$.
\item $\triangleright:\KK_-(X)\times\OO(Y)\to\OO\CC(X,Y),(K,U)\mapsto\{f\in\CC(X,Y):f(K)\In U\}$.
\end{enumerate}
The maps $\FF$ and $\Box$ are even computable prebases. 
\end{proposition}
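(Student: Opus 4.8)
The plan is to prove each of the five presubbase claims by unwinding the transpose of the given map and recognizing it as one of the computable embeddings already collected in Proposition~\ref{prop:computable-hyperspace}; by Definition~\ref{def:computable-presubbase} this is exactly what is required. For~(1), the index space is $X$ and the base space is $\OO(X)$, and a direct computation gives $\UU^\T(U)=\{x\in X:U\in\UU_x\}=U$, so $\UU^\T=\id_{\OO(X)}$, which is trivially a computable embedding. For~(2) and~(3) the two maps are mutually transpose: $\FF^\T(U)=\{K:K\In U\}=\Box(U)$ and $\Box^\T(K)=\{U:K\In U\}=\FF(K)$, so the presubbase property of $\FF$ follows from $\Box$ being a computable embedding (item~(14)) and that of $\Box$ from $\FF$ being a computable embedding (item~(12)). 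For~(4) I would compute $\Diamond^\T(A)=\{U:A\cap U\neq\emptyset\}=\TT_A$, reducing the claim to item~(13); and for~(5), $\triangleright^\T(f)=\{(K,U):f(K)\In U\}=\CC\OO(f)$, reducing the claim to item~(15), where the hypothesis that $Y$ is a computable Kolmogorov space is precisely what makes $\CC\OO$ a computable embedding.

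For the stronger claims that $\FF$ and $\Box$ are computable prebases, the plan is to invoke Lemma~\ref{lem:closure-compact-intersection}, which reduces the task to exhibiting a single-valued computable map sending each compact family of indices to one index whose associated set equals the compact intersection. For $\FF$, whose index space is $\KK_-(X)$, a family $\KK\in\KK_-\KK_-(X)$ satisfies $\bigcap_{K\in\KK}\FF_K=\{U:\sat\bigcup\KK\In U\}=\FF_{\sat\bigcup\KK}$, so I would take $R(\KK):=\sat\bigcup\KK$, which is computable by item~(11). For $\Box$, whose index space is $\OO(X)$, a family $\KK\in\KK_-\OO(X)$ satisfies $\bigcap_{U\in\KK}\Box_U=\{K:K\In\bigcap\KK\}=\Box_{\bigcap\KK}$, so I would take $R(\KK):=\bigcap\KK$, which is computable by item~(10). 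In both cases the covering condition is witnessed by the empty compact family, since $\FF_\emptyset=\OO(X)$ and $\Box_X=\KK_-(X)$ exhaust the respective base spaces.

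I do not expect any genuine obstacle: once the transposes are correctly identified the presubbase statements are immediate, and the prebase statements follow from the elementary observation that a compact intersection of the sets $\FF_K$ (respectively $\Box_U$) is again of the same form. The only point demanding care is the bookkeeping of keeping track, for each map, of which space plays the role of the index space $Y$ and which plays the role of the base space $X$ in Definition~\ref{def:computable-presubbase}, since $\FF$ and $\Box$ are transpose to one another and it is easy to interchange their roles.
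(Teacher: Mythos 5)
Your proposal is correct and follows essentially the same route as the paper: identify $\UU^\T=\id_{\OO(X)}$, $\FF^\T=\Box$, $\Box^\T=\FF$, $\Diamond^\T=\TT$, $\triangleright^\T=\CC\OO$ and cite the corresponding items of Proposition~\ref{prop:computable-hyperspace}, then use Lemma~\ref{lem:closure-compact-intersection} with $\sat\bigcup\KK$ (item~(11)) for $\FF$ and $\bigcap\KK$ (item~(10)) for $\Box$. Your explicit check of the covering condition is a small addition the paper leaves implicit, but it changes nothing of substance.
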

\begin{proof}
We have $\UU^\T=\id_{\OO(X)}$, $\Box^\T=\FF$, $\Diamond^\T=\TT$ and $\triangleright^\T=\CC\OO$
where the maps $\FF$, $\TT$, $\CC\OO$ and $\Box$ are computable embeddings according to Proposition~\ref{prop:computable-hyperspace}.
Hence, $\UU$, $\FF$, $\Box$, $\Diamond$ and $\triangleright$ are all computable presubbases.

We still need to prove that $\FF$ and $\Box$ are even computable prebases.
Given $\KK\in\KK_-\KK_-(X)$ we can compute $\sat\bigcup_{K\in\KK}K\in\KK_-(X)$ according to 
Proposition~\ref{prop:computable-hyperspace} and because
$\bigcap_{K\in\KK}\FF_K=\FF_{\sat\bigcup_{K\in\KK}K}$ it follows by Lemma~\ref{lem:closure-compact-intersection} and Proposition~\ref{prop:computable-hyperspace}
that $\FF$ is a computable prebase.
Likewise, given $\KK\in\KK_-\OO(X)$ we can compute $\bigcap_{U\in\KK}U\in\OO(X)$ and because
$\bigcap_{U\in\KK}\Box(U)=\Box(\bigcap_{U\in\KK}U)$ it follows by Lemma~\ref{lem:closure-compact-intersection}
that $\Box$ is a computable prebase.
\end{proof}

In particular, we can conclude that all the involved spaces $\OO(X)$, $\KK_-(X)$, $\AA_+(X)$ and $\CC(X,Y)$ are computable Kolmogorov spaces under the given conditions
(see Corollary~\ref{cor:Kolmogorov-closure-properties}).
The maps specified in Proposition~\ref{prop:presubbase-hyperspace-function-space}
are all known as subbases of some well-known topologies on the respective
spaces. We summarize the terminology in the table in Figure~\ref{fig:subbases}.

\begin{figure}[htb]
	\begin{tabular}{lll}
		{\bf space} & {\bf subbase} & {\bf name of topology}\\\hline
		$\CC(X,Y)$ & $\triangleright$ & compact-open topology\\
		$\AA_+(X)$ & $\Diamond$ & lower Fell topology\\
		$\KK_-(X)$ & $\Box$ & upper Vietoris topology\\
		$\OO(X)$ & $\FF$ & compact-open topology\\
		$\OO(X)$ & $\UU$ & point-open topology
	\end{tabular}
\caption{Function and hyperspace topologies and their subbases.}
\label{fig:subbases}
\end{figure}

Proposition~\ref{prop:presubbase-hyperspace-function-space} can also be used as an example
to illustrate the different scopes of presubbases, prebases, and bases, respectively.

\begin{example}
\label{ex:presubbase-prebase}
Let $X$ be some represented space.
\begin{enumerate}
\item The neighborhood map $\UU$
is a computable presubbase of $\OO(X)$ 
that generates the {\em point-open topology} on $\OO(X)$.
\item The canonical computable prebase associated to $\UU$ (see Proposition~\ref{prop:computable-prebase}) is $\bigcap_\KK\UU=\FF$, i.e., the filter map $\FF$ that generates 
the compact-open topology on $\OO(X)$, whose sequentialization 
is hence the Scott topology on $\OO(X)$.
\item The map $\FF$ is a computable base of $\OO(X)$ with respect to the Scott topology
if and only if the space $X$ is consonant.
\item The map $\id_{\OO\OO(X)}$ is always a computable (Lacombe) base of $\OO(X)$ with respect to the Scott topology.
\end{enumerate}
\end{example}

These examples show that in the general case the concept of a computable presubbase and of a computable
prebase might actually be more fruitful and interesting than the concept of a computable base.
We have lots of natural presubbases and prebases that reveal useful information about our spaces. 

With the following result we provide a few natural constructions of prebases, where we
exploit that the index spaces are overt. 
Overt spaces are exactly those spaces that allow computable
projections on open sets.

\begin{proposition}[Overt space]
	\label{prop:overt-spaces}
A represented spaces $X$ is overt if and only if the projection
\[\pr_Y:\OO(Y\times X)\to\OO(Y),U\mapsto\{y\in Y:(\exists x\in X)\;(y,x)\in U\}\]
is computable for every represented space $Y$
\end{proposition}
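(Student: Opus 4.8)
The plan is to prove both directions of the equivalence, using the standard fact that overtness is about computable existential quantification over the space.

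For the forward direction, suppose $X$ is overt, meaning $X$ is a computable point in $\AA_+(X)$; equivalently, the map $\exists_X:\OO(X)\to\IS$ that tests whether an open set is nonempty is computable. I would fix a represented space $Y$ and aim to show that $\pr_Y:\OO(Y\times X)\to\OO(Y)$ is computable. Given $U\in\OO(Y\times X)$, the defining equation $\pr_Y(U)=\{y\in Y:(\exists x\in X)\;(y,x)\in U\}$ suggests combining the section operation with the existential test. Concretely, for fixed $y$ the section $\sec(y,U)=\{x\in X:(y,x)\in U\}\in\OO(X)$ is computable by Proposition~\ref{prop:computable-hyperspace}(6), and then $y\in\pr_Y(U)$ holds if and only if this section is nonempty, i.e. $\exists_X(\sec(y,U))=1$. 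The key point is that this composition produces a characteristic function $Y\to\IS$ that is continuous (indeed computable) as a function of the pair $(y,U)$, and by currying/type conversion it yields an element of $\OO(Y)$ computably from $U$. So the forward direction reduces to packaging the section map together with the computable existential quantifier guaranteed by overtness.

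For the converse, I would instantiate the hypothesis at a convenient choice of $Y$ to recover overtness of $X$. Taking $Y$ to be the one-point space (or any space with a computable point), the projection $\pr_Y$ collapses to exactly the existential test: with $Y$ a singleton, $\OO(Y\times X)\cong\OO(X)$ and $\pr_Y(U)$ records whether $U$ is nonempty, so computability of $\pr_Y$ gives computability of $\exists_X:\OO(X)\to\IS$, which is precisely the statement that $X$ is overt. I should double-check that the isomorphism $\OO(Y\times X)\cong\OO(X)$ for singleton $Y$ is computable in both directions, but this is routine for represented spaces.

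The main obstacle is getting the type conversion right in the forward direction: I need that the map $(y,U)\mapsto\exists_X(\sec(y,U))$, which is a computable map $Y\times\OO(Y\times X)\to\IS$, transposes into a computable map $\OO(Y\times X)\to\OO(Y)$. This is the standard evaluation/currying machinery for represented function spaces, where an open subset of $Y$ is identified with a continuous characteristic function $Y\to\IS$; the content is simply that $\OO(Y)=\CC(Y,\IS)$ computably and that the exponential law holds. Once this identification is made explicit, both directions follow by composing the already-established computable operations, so no genuinely new construction is required beyond assembling the section map and the overtness witness.
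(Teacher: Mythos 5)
Your proof is correct and follows essentially the same route as the paper: the forward direction composes the computable section map with $\exists_X$ and curries, and the converse instantiates $Y$ as the singleton space, where the needed identification $\OO(X)\to\OO(\{0\}\times X)$, $U\mapsto\{0\}\times U$ is exactly the computable product operation from Proposition~\ref{prop:computable-hyperspace}.
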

\begin{proof}
We use the fact that sections $U_y:=\{x\in X:(y,x)\in U\}$ and products are computable by~Proposition~\ref{prop:computable-hyperspace}.\\
	``$\TO$'' $\chi_{\pr_Y(U)}(y)=\exists_X(U_y)$ as $(\exists x\in X)\;(y,x)\in U\iff U_y\not=\varnothing$.\\
	``$\Longleftarrow$'' $\exists_X(U)=\exists_X((\{0\}\times U)_0)=\chi_{\pr_Y(\{0\}\times U)}(0)$ for the space $Y=\{0\}$.
\end{proof}

Now we can prove the following closure properties for bases $B:R\to\OO(X)$
with overt index spaces $R$. We note that $\OO(X)$ is always overt, hence every computable
Kolmogorov space $X$ has a computable base $\id_{\OO(X)}:\OO(X)\to\OO(X)$ 
with an overt index space. By $X\sqcup Y$ we denote the {\em coproduct}
of two represented spaces $X$ and $Y$, which is the set $X\sqcup Y=(\{0\}\times X)\cup(\{1\}\times Y)$ with its canonical representation and by $X\sqcap Y$ we denote the {\em meet}
$X\sqcap Y=X\cap Y$, which is represented with names that are pairs, where one component
represents the point as a point in $X$ and the second component the same point as a point in $Y$. We can identify the space $X\sqcap Y$ with the subspace $\{(x,y)\in X\times Y:x=y\}$ of $X\times Y$. Hence, the results on $X\sqcap Y$ could also be derived
from the results on products and subspaces.

\begin{proposition}[Constructions of overt prebases]
\label{prop:constructions-overt-prebases}
Let $X$ and $Y$ be represented spaces and let $Z\In X$.
Let $S$ and $R$ be overt represented spaces.
If $B_X:R\to\OO(X)$ and $B_Y:S\to\OO(Y)$ are computable presubbases (prebases),
then so are: 
\begin{enumerate}
\item $B_{X\times Y}:R\times S\to\OO(X\times Y),(r,s)\mapsto B_X(r)\times B_Y(s)$.
\item $B_{Y^\IN}:S^*\to\OO(Y^\IN),(s_1,...,s_n)\mapsto B_Y(s_1)\times...\times B_Y(s_n)\times Y^\IN$.
\item $B_Z:R\to\OO(Z),r\mapsto B_X(r)\cap Z$.
\item $B_{X\sqcup Y}:R\sqcup S\to\OO(X\sqcup Y),t\mapsto\left\{\begin{array}{ll}
B_X(t) & \mbox{if $t\in R$}\\
B_Y(t) & \mbox{if $t\in S$}
\end{array}\right..$
\item $B_{X\sqcap Y}:R\times S\to\OO(X\sqcap Y),(r,s)\mapsto B_X(r)\cap B_Y(s)$.
\end{enumerate}
If $B_X$ and $B_Y$ are even computable bases, then so is $B_{X\sqcup Y}$. 
Overtness of $R$ is not needed for (3).
\end{proposition}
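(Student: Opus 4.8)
The plan is to verify, in each case, the defining condition of Definition~\ref{def:computable-presubbase} --- that the transpose of the constructed family is a well-defined computable embedding --- and, for the prebase claims, the condition of Definition~\ref{def:computable-base}. The guiding principle is that in every construction the transpose decomposes explicitly in terms of the given transposes $B_X^\T$ and $B_Y^\T$. For the product a direct computation gives
\[
(B_{X\times Y})^\T(x,y)=\{(r,s):x\in B_X(r)\AND y\in B_Y(s)\}=B_X^\T(x)\times B_Y^\T(y);
\]
for the subspace one gets $(B_Z)^\T=B_X^\T|_Z$; and for the coproduct the transpose of a point in the $X$-summand is $B_X^\T(x)$ viewed inside the $R$-summand of $\OO(R\sqcup S)\cong\OO(R)\times\OO(S)$, and symmetrically for the $Y$-summand. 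The case of $Y^\IN$ is the analogous cylinder computation $(B_{Y^\IN})^\T((y_i))=\bigcup_{n}\big(B_Y^\T(y_1)\times\dots\times B_Y^\T(y_n)\big)$ inside $S^*$, while the meet $X\sqcap Y$ reduces to (1) and (3) via its identification with the diagonal subspace of $X\times Y$, so it needs no separate treatment.

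First I would settle the forward (computability) direction of each transpose, which is routine: it uses only that $B_X^\T$ and $B_Y^\T$ are computable together with the computable hyperspace operations of Proposition~\ref{prop:computable-hyperspace}, namely the open product $\times$ (item~7) for (1), (2) and (5), the computability of restriction of open sets to a subspace for (3), and the computable isomorphism $\OO(R\sqcup S)\cong\OO(R)\times\OO(S)$ for (4). The substantive direction is recovering the point from the transpose, and this is exactly where overtness of the index spaces enters. For the product, given $B_X^\T(x)\times B_Y^\T(y)\in\OO(R\times S)$ I would apply the projections $\pr_R$ and $\pr_S$, which are computable by Proposition~\ref{prop:overt-spaces} precisely because $S$ and $R$ are overt; since the factors $B_X^\T(x)$ and $B_Y^\T(y)$ are nonempty (the presubbases cover, which is automatic once we work with prebases), the projections return exactly the two factors, and then the embedding property of $B_X^\T$ and $B_Y^\T$ recovers $x$ and $y$. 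For the coproduct, the transpose lives in exactly one summand, and I would use overtness of $R$ and $S$ to semidecide, via $\exists_R$ and $\exists_S$, which factor is nonempty; this locates the summand, after which the corresponding factor embedding recovers the point and the summand tag is appended. The subspace case needs no overtness, as $(B_Z)^\T=B_X^\T|_Z$ is a restriction of a computable embedding to a subspace and hence again a computable embedding, which matches the stated exception for (3).

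For the prebase strengthening the key observation is that compact intersections distribute over each construction. For the product one has the identity
\[
\bigcap_{(r,s)\in K}\big(B_X(r)\times B_Y(s)\big)=\Big(\bigcap_{r\in\pr_R(K)}B_X(r)\Big)\times\Big(\bigcap_{s\in\pr_S(K)}B_Y(s)\Big),
\]
where $\pr_R(K)$ and $\pr_S(K)$ are the compact projections, computable by Proposition~\ref{prop:computable-hyperspace}(4). Applying the prebase maps of $B_X$ and $B_Y$ rewrites each factor as an overt union $\bigcup_{r\in A}B_X(r)$ and $\bigcup_{s\in A'}B_Y(s)$, and the product of two overt unions is the overt union indexed by $A\times A'\in\AA_+(R\times S)$, computable by the closed product of Proposition~\ref{prop:computable-hyperspace}(8); the covering $X\times Y=\bigcup_{(r,s)}B_X(r)\times B_Y(s)$ follows from the factor coverings. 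The cases (2) and (4) run along the same lines: for $S^*$ a compact $K$ meets only finitely many summands $S^n$, and the intersection reduces to finitely many coordinatewise compact intersections $\bigcap_{s\in P_i}B_Y(s)$ over the compact coordinate projections $P_i$; for the coproduct a compact $K$ splits as $K_R\sqcup K_S$, and the intersection is empty unless one side is empty, reducing again to a factor prebase map. I expect the main obstacle to be the bookkeeping in these last two cases --- verifying the distribution identities and that the resulting closed index sets are produced computably, including the degenerate empty and full intersections --- rather than any conceptual difficulty, since the conceptual content is carried entirely by the product identity above together with the overt projections.
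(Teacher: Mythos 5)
Your proposal follows essentially the same route as the paper's proof: identical transpose formulas, recovery of the point from the transpose via sections and the overt projections of Proposition~\ref{prop:overt-spaces} followed by the factor embeddings, and, for the prebase claims, distribution of compact intersections over the constructions combined with compact projections, the factor prebase maps, and closed products from Proposition~\ref{prop:computable-hyperspace}. Handling (5) by identifying $X\sqcap Y$ with the diagonal subspace of $X\times Y$ is a shortcut the paper itself licenses in the paragraph preceding the proposition (though its proof treats (5) directly), and your explicit flagging of the nonemptiness needed in the projection step is, if anything, more careful than the paper --- although your justification (``the presubbases cover'') is only part of the definition of a prebase, not of a presubbase, an edge case the paper glosses over silently as well.

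The one place where your sketch hides a real difficulty is the coproduct prebase case. Your set-theoretic observation is correct, and in fact sharper than the identity displayed in the paper, which reads $\bigcap_{t\in K}B_{X\sqcup Y}(t)=\left(\bigcap_{r\in K_0}B_X(r)\right)\sqcup\left(\bigcap_{s\in K_1}B_Y(s)\right)$ and fails whenever both sections $K_0,K_1$ are nonempty (the left-hand side is then empty, the right-hand side need not be). But what you defer to ``bookkeeping'' is precisely the nontrivial point: from $K\in\KK_-(R\sqcup S)$ one cannot decide which of the cases $K_0=\emptyset$, $K_1=\emptyset$, or neither holds, so ``reducing again to a factor prebase map'' cannot be implemented by branching on the cases. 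One can only \emph{semidecide} $K_i=\emptyset$, since this holds iff $K$ is contained in the (open) opposite summand. The repair is to produce an answer that is correct in all cases simultaneously: compute $A_0\in R_X(K_0)$ and $A_1\in R_Y(K_1)$ with the factor prebase maps (using the convention that empty intersections give the whole space), and output $\left(A_0\wedge[K_1=\emptyset]\right)\sqcup\left(A_1\wedge[K_0=\emptyset]\right)$, where $D\wedge[t]$ denotes the closed set equal to $D$ if the semidecidable condition $t$ holds and empty otherwise; this conditional set is computable in $\AA_+$ because $\exists_{D\wedge[t]}(U)=\exists_D(U)\wedge t$ is a conjunction of semidecidable conditions, and a check of the four cases shows the resulting union always equals the compact intersection. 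So your case analysis needs this one extra idea to become a proof --- and note that the paper's own argument for (4) misses the same point.
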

\begin{proof}
Let $B_X$ and $B_Y$ be computable presubbases.
We obtain the transposes
\begin{enumerate}
\item $B_{X\times Y}^\T:X\times Y\to\OO(R\times S),(x,y)\mapsto B_X^\T(x)\times B_Y^\T(y)$.
\item $B_{Y^\IN}^\T:Y^\IN\to\OO(S^*),(y_n)_{n\in\IN}\mapsto\bigsqcup_{n\in\IN}\left(B_Y^\T(y_0)\times...\times B_Y^\T(y_{n-1})\right)$.
\item $B_Z^\T:Z\to\OO(R),z\mapsto B_X^\T(z)$.
\item $B_{X\sqcup Y}^\T:X\sqcup Y\to\OO(R\sqcup S),z\mapsto\left\{\begin{array}{ll}
B_X^\T(z) & \mbox{if $z\in X$}\\
B_Y^\T(z) & \mbox{if $z\in Y$}
\end{array}\right..$
\item $B_{X\sqcap Y}^\T:X\sqcap Y\to\OO(R\times S), z\mapsto B_X^\T(z)\times B_Y^\T(z)$.
\end{enumerate}
It is easy to see that all these maps are computable and we are going to show that they are computable embeddings. Hence the maps listed in the proposition
are all computable presubbases. \\
(1) and (2) We prove this in detail for the countable case of $B_{Y^\IN}$ in (2)
and we leave the finite case to the reader.
Given 
$U:=\bigsqcup_{n\in\IN}\left(B_Y^\T(y_0)\times...\times B_Y^\T(y_{n-1})\right)\in\OO(S^*)$
for some $(y_n)_{n\in\IN}\in Y^\IN$
and $n\in\IN$, we can compute the sections 
\[V:=\sec_{n+1}(U)=B_Y^\T(y_0)\times...\times B_Y^\T(y_{n})\in\OO(S^{n+1})\] 
and since $S$ is overt, and hence $S^n$ too,
we can then compute the projection $W:=B_Y^\T(y_{n})\in\OO(S)$ 
by Proposition~\ref{prop:overt-spaces} (which also holds uniformly in $n$).
From $W$ we can reconstruct $y_n$, as $B_Y^\T$ is a computable embedding.\\
(3) It immediately follows that $B_Z^\T$ is a computable embedding because $B_X^\T$
is one. Overtness of $R$ is not required here.\\
(4) Given $U\in\OO(R\sqcup S)$, where $R\sqcup S=(\{0\}\times R)\cup(\{1\}\times S)$,
we can compute the sections $U_0\in\OO(R)$ and $U_1\in\OO(S)$. 
If $U$ is in the image of $B_{X\sqcup Y}^\T$ exactly one of $U_0$ and $U_1$ is non-empty and since $R$ and $S$ are overt,
we can find out which one it is. If, for instance, $U_0\not=\varnothing$, then
$U_0=B_X^\T(x)$ and since $B_X^\T$ is a computable embedding, we can reconstruct
$x\in X$ from $U_0$. Likewise, we can reconstruct $y\in Y$ in the case of $U_1\not=\varnothing$. \\
(5) Given $U=B_X^\T(z)\times B_Y^\T(z)\in\OO(R\times S)$ we can compute
the projections $B_X^\T(z)$ and $B_Y^\T(z)$ by Proposition~\ref{prop:constructions-overt-prebases}
and hence we can reconstruct $z\in X$ as well as $z\in Y$, as $B_X^\T$ and $B_Y^\T$
are computable embeddings. This yields $z\in X\sqcap Y$.

Let now $B_X$ and $B_Y$ even be computable prebases.
We claim that in this case all the maps listed in the proposition are computable prebases too.
To this end, we need to prove that compact intersections of the given map can be computably
obtained as overt unions. \\
(1) and (2) We prove the claim for $B_{Y^\IN}$. Given $K\in\KK_-(S^*)$
we can compute the projection $K_0:=\pr_0(K)\in\KK_-(\IN)$ onto the natural number component
and we can compute some upper bound $m\in\IN$ on $\max(K_0)$. 
Hence, we can compute also the projections $K_i:=\sat(\pr_i(K))\in\KK_-(S)$ on the components
$i=1,...,m$ of $K$ by Proposition~\ref{prop:computable-hyperspace} (which holds uniformly in $i$). 
For each $i=1,...,m$ we can compute some set $A_i\in\AA_+(S)$ with $\bigcap_{s\in K_i}B_Y(s)=\bigcup_{s\in A_i}B_Y(s)$.
By Proposition~\ref{prop:computable-hyperspace} (which holds analogously for finite products) we can hence compute
$A:=A_1\times...\times A_m\in\AA_+(S^*)$ and we obtain (with implicitly bound $n\in\IN$):
\begin{eqnarray*}
\bigcap_{(s_1,...,s_n)\in K}B_{Y^\IN}(s_1,...,s_n)
&=&\bigcap_{(s_1,...,s_n)\in K}(B_Y(s_1)\times...\times B_Y(s_n)\times Y^\IN)\\
&=&\left(\bigcap_{s_1\in K_1}B_Y(s_1)\right)\times...\times\left(\bigcap_{s_m\in K_m}B_Y(s_m)\right)\times Y^\IN\\
&=&\left(\bigcup_{s_1\in A_1}B_Y(s_1)\right)\times...\times\left(\bigcup_{s_m\in A_m}B_Y(s_m)\right)\times Y^\IN\\
&=&\bigcup_{(s_1,...,s_m)\in A}B_{Y^\IN}(s_1,...,s_m).
\end{eqnarray*}
We note that the second and fourth equalities hold as the conditions on the $s_i$ are
independent and for the second equality we also use  that the saturated sets $K_i=\sat(\pr_i(K))$ can be replaced by the unsaturated sets $\pr_i(K)$ in 
the intersections by Proposition~\ref{prop:computable-hyperspace}.\\
(3) This is obvious as the intersection with $Z$ can be associated 
to a compact intersection as well as to an overt union.\\
(4) It is easy to see that given $K\in\KK_-(R\sqcup S)$ we can compute
the sections $K_0\in\KK_-(R)$ and $K_1\in\KK_-(S)$. This is because 
$K_0\In U\iff K\In U\sqcup Y$ and $K_1\In V\iff K\In X\sqcup V$ for $U\in\OO(R)$
and $V\in\OO(S)$. With these sections we obtain
\begin{eqnarray*}
	\bigcap_{t\in K}B_{X\sqcup Y}(t)
	&=&
    \left(\bigcap_{r\in K_0}B_X(t)\right)\sqcup
    \left(\bigcap_{s\in K_1}B_Y(s)\right)\\
    &=&
   \left(\bigcup_{r\in A_0}B_X(t)\right)\sqcup
   \left(\bigcup_{s\in A_1}B_Y(s)\right)
   =\bigcup_{t\in A_0\sqcup A_1}B_{X\sqcup Y}(t)
\end{eqnarray*}
where $A_0\in\AA_+(R)$ and $A_1\in\AA_+(S)$ are sets that we can compute
from $K_0$ and $K_1$ by assumption. Hence we can also compute $A:=A_0\sqcup A_1\in\AA_+(R\sqcup S)$ because $A\cap U\not=\varnothing$ for $U\in\OO(R\sqcup S)$
if and only if $A_0\cap U_0\not=\varnothing$ or $A_1\cap U_1\not=\varnothing$. \\
(5) If $K\in\KK_-(R\times S)$, then we can compute the projections $K_1:=\sat\pr_1(K)\in\KK_-(R)$ and $K_2:=\sat\pr_2(K)\in\KK_-(S)$. We obtain
\begin{eqnarray*}
	\bigcap_{(r,s)\in K}B_{X\sqcap Y}(r,s)
	&=&\bigcap_{r\in K_1}B_X(r)\cap\bigcap_{s\in K_2}B_Y(s)\\
	&=& \left(\bigcup_{r\in A_1}B_X(r)\right)\cap\left(\bigcup_{s\in A_2}B_Y(s)\right)
    =\bigcup_{(r,s)\in A_1\times A_2}B_X(r)\cap B_Y(s)
\end{eqnarray*}
and by assumption we can compute corresponding sets $A_1\in\AA_+(R)$ and $A_2\in\AA_+(S)$.

Let now $B_X$ and $B_Y$ even be computable bases.
That $B_{X\sqcup Y}$ is a base follows from 
$\OO(X\sqcup Y)=\OO(X)\sqcup\OO(Y)$.
\end{proof}

Using Proposition~\ref{prop:constructions-overt-prebases} and the computable base
$\id_{\OO(X)}:\OO(X)\to\OO(X)$ we even obtain specific
computable prebases for spaces associated to computable Kolmogorov spaces. 

\begin{corollary}[Computable prebases]
\label{cor:constructions-computable-prebases}
Let $X$ and $Y$ be computable Kolmogorov spaces and let $Z\In X$. Then the following
are computable prebases:
\begin{enumerate}
\item $B_{X\times Y}:\OO(X)\times \OO(Y)\to\OO(X\times Y),(U,V)\mapsto U\times V$.
\item $B_{Y^\IN}:\OO(X)^*\to\OO(Y^\IN),(U_1,...,U_n)\mapsto U_1\times...\times U_n\times Y^\IN$.
\item $B_Z:\OO(X)\to\OO(Z),U\mapsto U\cap Z$.
\item $B_{X\sqcup Y}:\OO(X)\sqcup \OO(Y)\to\OO(X\sqcup Y),U\mapsto U$.
\item $B_{X\sqcap Y}:\OO(X)\times \OO(Y)\to\OO(X\sqcap Y),(U,V)\mapsto U\cap V$.
\end{enumerate}
$B_{X\sqcup Y}$ is even a computable base.
\end{corollary}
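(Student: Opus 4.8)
The plan is to obtain this corollary as the direct specialization of Proposition~\ref{prop:constructions-overt-prebases} in which both input presubbases are taken to be identity maps. Concretely, I would set $B_X=\id_{\OO(X)}:\OO(X)\to\OO(X)$ and $B_Y=\id_{\OO(Y)}:\OO(Y)\to\OO(Y)$, so that the index spaces become $R=\OO(X)$ and $S=\OO(Y)$, and then read off what the five generic constructions of Proposition~\ref{prop:constructions-overt-prebases} evaluate to under this substitution.

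First I would verify the hypotheses of that proposition. Since $X$ and $Y$ are computable Kolmogorov spaces, Proposition~\ref{prop:identity-base} tells us that $\id_{\OO(X)}$ and $\id_{\OO(Y)}$ are computable (Lacombe) bases; in particular they are computable prebases, and indeed computable bases. Moreover, as recalled just before Proposition~\ref{prop:constructions-overt-prebases}, every space $\OO(X)$ is overt, so the index spaces $R=\OO(X)$ and $S=\OO(Y)$ satisfy the required overtness assumption. Hence all hypotheses are met and each of the five constructions is again a computable prebase.

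Second, I would substitute the identities into the formulas of Proposition~\ref{prop:constructions-overt-prebases} to confirm that they collapse to the elementary maps listed here:
\begin{enumerate}
\item $B_{X\times Y}(U,V)=\id_{\OO(X)}(U)\times\id_{\OO(Y)}(V)=U\times V$;
\item $B_{Y^\IN}(U_1,\dots,U_n)=U_1\times\dots\times U_n\times Y^\IN$, with domain $\OO(Y)^*$;
\item $B_Z(U)=\id_{\OO(X)}(U)\cap Z=U\cap Z$, where overtness of the index space is not even needed;
\item $B_{X\sqcup Y}(U)=U$ under the identification of $U\in\OO(X)$ with its image in $\OO(X\sqcup Y)$;
\item $B_{X\sqcap Y}(U,V)=\id_{\OO(X)}(U)\cap\id_{\OO(Y)}(V)=U\cap V$.
\end{enumerate}
Finally, since $\id_{\OO(X)}$ and $\id_{\OO(Y)}$ are even computable bases, the concluding clause of Proposition~\ref{prop:constructions-overt-prebases} yields that $B_{X\sqcup Y}$ is a computable base, not merely a prebase.

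I expect no genuine obstacle here: the corollary is a pure instantiation whose role is to exhibit concrete prebases for the derived spaces, and the only points requiring attention are the two bookkeeping facts above---that the identity is a computable base and that $\OO(X)$ is overt---both of which are already available in the excerpt. The single mild subtlety is matching the index space in clause~(2), where the substitution yields $\OO(Y)^*$ as the domain.
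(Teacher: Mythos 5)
Your proposal is correct and is exactly the paper's argument: the corollary is presented as a direct instantiation of Proposition~\ref{prop:constructions-overt-prebases} with $B_X=\id_{\OO(X)}$ and $B_Y=\id_{\OO(Y)}$, justified by Proposition~\ref{prop:identity-base} (the identity is a computable base) and the remark that $\OO(X)$ is always overt, with the final clause of that proposition giving that $B_{X\sqcup Y}$ is a computable base. Your side observation is also right: the domain in clause~(2) should read $\OO(Y)^*$, so the $\OO(X)^*$ in the statement is a typo.
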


Using Theorem~\ref{thm:Kolmogorov-bases}, Corollary~\ref{cor:constructions-computable-prebases} and
Proposition~\ref{prop:presubbase-hyperspace-function-space}
we can directly conclude
that computable Kolmogorov spaces have very nice closure properties.
We note that for this proof we only need the statements on computable
prebases that were relatively easy to establish. 
By $\AA_-(X)$ we denote
the space of closed subsets of $X$ represented as complements of open
sets in $\OO(X)$. This space is hence computably isomorphic to $\OO(X)$. 
We also consider the spaces $\AA(X):=\AA_+(X)\sqcap\AA_-(X)$
and $\KK(X):=\AA_+(X)\sqcap\KK_-(X)$ (we note that the latter contains the compact, closed and saturated sets).

\begin{corollary}[Closure properties of computable Kolmogorov spaces]
\label{cor:Kolmogorov-closure-properties}
If  $X$ and $Y$ are computable Kolmogorov spaces, then so  are
\begin{enumerate}
\item  $X\times Y$, $X\sqcup Y$, $X\sqcap Y$,  $Y^\IN$, and every subspace of $X$.
\end{enumerate}
Let $X$ be a represented space. If $Y$ is a computable Kolmogorov space, then so are
\begin{enumerate}[resume]
\item $\CC(X,Y)$, $\OO(X)$, $\AA_+(X)$, $\AA_-(X)$, $\AA(X)$, $\KK_-(X)$ and $\KK(X)$.
\end{enumerate}
Analogous statements hold for continuous Kolmogorov spaces.
\end{corollary}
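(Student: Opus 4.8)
The plan is to reduce every assertion to the characterization in Theorem~\ref{thm:Kolmogorov-bases}, according to which a represented space is a computable Kolmogorov space precisely when it admits a computable presubbase (equivalently, a computable prebase). Thus for each listed space it suffices to exhibit one such presubbase, and all the presubbases I need have already been produced in Corollary~\ref{cor:constructions-computable-prebases} and Proposition~\ref{prop:presubbase-hyperspace-function-space}.

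For the first group of spaces I would simply read off the computable prebases from Corollary~\ref{cor:constructions-computable-prebases}: items (1), (4), (5) and (2) there furnish computable prebases for $X\times Y$, $X\sqcup Y$, $X\sqcap Y$ and $Y^\IN$, respectively, while item (3) yields a computable prebase $B_Z$ for an arbitrary subspace $Z\In X$. Since a computable prebase is in particular a computable presubbase, Theorem~\ref{thm:Kolmogorov-bases} turns each of these spaces into a computable Kolmogorov space. These constructions all start from the canonical computable base $\id_{\OO(X)}$ (respectively $\id_{\OO(Y)}$), which is available exactly because $X$ and $Y$ are assumed to be computable Kolmogorov spaces.

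For the hyper- and function spaces I would invoke the presubbases of Proposition~\ref{prop:presubbase-hyperspace-function-space}: the neighbourhood map $\UU$ is a computable presubbase of $\OO(X)$ (item~1), $\Diamond$ is a computable presubbase of $\AA_+(X)$ (item~4), $\Box$ is a computable (pre)base of $\KK_-(X)$ (item~3), and $\triangleright$ is a computable presubbase of $\CC(X,Y)$ (item~5), the last of these using that $Y$ is a computable Kolmogorov space. Theorem~\ref{thm:Kolmogorov-bases} then gives the conclusion in each case. The remaining three spaces reduce to ones already treated: $\AA_-(X)$ is computably isomorphic to $\OO(X)$ (it is represented via complements of open sets), and being a computable Kolmogorov space is invariant under computable isomorphism; moreover $\AA(X)=\AA_+(X)\sqcap\AA_-(X)$ together with $\KK(X)=\AA_+(X)\sqcap\KK_-(X)$ are meets of spaces just shown to be computable Kolmogorov spaces, so the closure under $\sqcap$ already established in the first group applies.

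There is no genuine obstacle here, as the constructive work has been absorbed into the earlier results; the only points that need a little care are ordering the argument so that $\AA_+(X)$, $\AA_-(X)$ and $\KK_-(X)$ are settled before the meets $\AA(X)$ and $\KK(X)$, and recording explicitly that the computable Kolmogorov property transfers along the computable isomorphism $\AA_-(X)\cong\OO(X)$. For the closing sentence, the continuous (non-effective) analogue is obtained verbatim by replacing ``computable'' with ``continuous'' throughout the cited statements, each of which holds in its continuous version as well.
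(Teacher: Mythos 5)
Your proposal is correct and follows essentially the same route as the paper: the first group of spaces is handled via Corollary~\ref{cor:constructions-computable-prebases} combined with Theorem~\ref{thm:Kolmogorov-bases}, and the hyper- and function spaces via the presubbases $\UU$, $\Diamond$, $\Box$, $\triangleright$ of Proposition~\ref{prop:presubbase-hyperspace-function-space}, with $\AA_-(X)$ treated through its computable isomorphism with $\OO(X)$ and $\AA(X)$, $\KK(X)$ reduced to the already-established closure under $\sqcap$ --- exactly the derivation the paper intends.
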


In fact, some of these results can also be derived from each other in various
ways. For instance, the statement on $\CC(X,Y)$ applied to the special case
of $X=\IN$ yields the statement for $Y^\IN$ and the special case of $Y=\IS$
yields the statements for $\OO(X)$ (and $\AA_-(X)$). 
This implies the statement for $\OO\OO(X)$, subspaces of which are computably
isomorphic to $\AA_+(X)$ and $\KK_-(X)$, respectively. 
We obtain the following conclusion (see \cite{Sch02c,Sch02} for the topological version)
using the fact that our product and function
space constructions satisfy evaluation and currying properties (see~\cite{Wei00,Wei87}).

\begin{corollary}[Schr\"oder 2002]
\label{cor:cartesian-closed-Kolmogorov}
The category of computable Kolmogorov spaces with computable maps as morphisms
is cartesian closed.
\end{corollary}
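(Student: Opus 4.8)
The plan is to exhibit the category of computable Kolmogorov spaces (objects: computable Kolmogorov spaces; morphisms: computable maps) as a full subcategory of the category of represented spaces with computable maps, and then to invoke closure under the cartesian-closed structure of the ambient category. Recall that among represented spaces the one-point space is a terminal object, the product $X\times Y$ with its canonical representation and the computable projections is a categorical product, and the function space $\CC(X,Y)$ is an exponential object; this last point rests exactly on the computability of the evaluation map $\ev\colon\CC(X,Y)\times X\to Y$ together with the fact that every computable $f\colon Z\times X\to Y$ has a unique computable curried form $\widehat f\colon Z\to\CC(X,Y)$ with $\ev\circ(\widehat f\times\id)=f$ (see \cite{Wei00,Wei87}).

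First I would record the standard categorical principle that a full subcategory of a cartesian closed category is itself cartesian closed as soon as it contains a terminal object and is closed under the ambient formation of binary products and of exponentials. Fullness is the crucial point: since the hom-sets of the subcategory coincide with those of the ambient category, the universal properties of the terminal object, of products, and of exponentials -- all of which are statements purely about morphisms -- transfer verbatim, and the same objects serve as the terminal object, products, and exponentials inside the subcategory.

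It then remains to verify the three closure conditions. The terminal one-point space is trivially a computable Kolmogorov space, its neighborhood map $\UU$ being a computable embedding for the trivial reason that the domain is a singleton. Closure under binary products is precisely the assertion that $X\times Y$ is a computable Kolmogorov space whenever $X$ and $Y$ are, which is Corollary~\ref{cor:Kolmogorov-closure-properties}(1). Closure under exponentials is the assertion that $\CC(X,Y)$ is a computable Kolmogorov space whenever $Y$ is, valid for every represented space $X$ and in particular for every computable Kolmogorov space $X$, which is Corollary~\ref{cor:Kolmogorov-closure-properties}(2). The same corollary also ensures that the domain $\CC(X,Y)\times X$ of the evaluation map lies in the subcategory, so evaluation and currying take place entirely within it.

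The only genuine mathematical content beyond this bookkeeping is the identification of $\CC(X,Y)$ as an exponential object, that is, the evaluation and currying properties of the function-space representation. This is the standard result of computable analysis cited above, and it is the step where the real work sits; everything else follows formally from the closure properties already established. Hence I expect no obstacle beyond correctly matching the function-space representation underlying Corollary~\ref{cor:Kolmogorov-closure-properties} with the one for which evaluation and currying are known to hold.
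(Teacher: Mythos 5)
Your proposal is correct and follows essentially the same route as the paper: the paper derives the corollary from Corollary~\ref{cor:Kolmogorov-closure-properties} (closure of computable Kolmogorov spaces under products and function spaces) together with the evaluation and currying properties of the product and function space constructions cited from \cite{Wei00,Wei87}. You merely make explicit the categorical bookkeeping (fullness of the subcategory, the terminal object) that the paper leaves implicit.
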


For Corollaries~\ref{cor:Kolmogorov-closure-properties} and \ref{cor:cartesian-closed-Kolmogorov}
it was sufficient to have computable presubbases for the respective spaces. 
The fact that we have even computable prebases for some of these spaces allows further
conclusions regarding the underlying topologies.
Using Corollaries~\ref{cor:computable-prebase} and \ref{cor:constructions-computable-prebases}
we obtain the following conclusions. 
By $\OO(X)\otimes\OO(Y)$ and $\bigotimes_{i\in\IN}\OO(X)$ we denote
the respective {\em product topologies}, by $\OO(X)|_Z$ the {\em subspace topology}
of $Z\In X$ and by $\OO(X)\wedge\OO(Y)$ the {\em meet topology} 
generated by the base $B_{X\sqcap Y}$.

\begin{corollary}[Topologies and set-theoretic constructions]
\label{cor:topology-set-theory}
Let $X$ and $Y$ be admissibly represented $\T_0$ spaces and let $Z\In X$ be a subspace. Then:
\begin{enumerate}
\item $\OO(X\times Y)=\seq(\OO(X)\otimes\OO(Y))$,
\item $\OO(X^\IN)=\seq(\bigotimes_{i\in\IN}\OO(X))$,
\item $\OO(Z)=\seq(\OO(X)|_Z)$,
\item $\OO(X\sqcap Y)=\seq(\OO(X)\wedge\OO(Y))$.
\end{enumerate}
\end{corollary}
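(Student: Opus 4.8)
The plan is to reduce to the computable Kolmogorov case by the upgrading construction of Theorem~\ref{thm:represented-Kolmogorov} and then to read each of the four topologies off the explicit prebases provided by Corollary~\ref{cor:constructions-computable-prebases}. Concretely, given represented $\T_0$ spaces $X$ and $Y$, I would first replace their representations by the computable Kolmogorov representations $\delta^\bullet$ of Theorem~\ref{thm:represented-Kolmogorov}, obtaining computable Kolmogorov spaces $X^\bullet$ and $Y^\bullet$ with unchanged final topologies $\OO(X)=\OO(X^\bullet)$ and $\OO(Y)=\OO(Y^\bullet)$ and unchanged (up to computable equivalence) represented open set lattices. Since $X^\bullet$ and $Y^\bullet$ are now computable Kolmogorov spaces, Corollary~\ref{cor:constructions-computable-prebases} applies and supplies for each construction an explicit computable prebase, namely $(U,V)\mapsto U\times V$ for the binary product, $(U_1,\dots,U_n)\mapsto U_1\times\dots\times U_n\times X^\IN$ for the countable power, $U\mapsto U\cap Z$ for the subspace, and $(U,V)\mapsto U\cap V$ for the meet.

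I would then identify the topology generated by each of these prebases as a subbase. These are, in turn, the product topology $\OO(X)\otimes\OO(Y)$, the product topology $\bigotimes_{i\in\IN}\OO(X)$ (its subbasic sets being exactly the finite cylinders $U_1\times\dots\times U_n\times X^\IN$), the subspace topology $\OO(X)|_Z$, and the meet topology $\OO(X)\wedge\OO(Y)$, the last of which is generated by this prebase by definition. Corollary~\ref{cor:computable-prebase} then immediately turns each of these into the desired identity for the upgraded spaces, e.g.\ $\OO(X^\bullet\times Y^\bullet)=\seq(\OO(X)\otimes\OO(Y))$, and likewise for the power, the subspace and the meet, with every right hand side expressed solely through the preserved lattices $\OO(X)$ and $\OO(Y)$.

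The step I expect to be the main obstacle is to see that passing to $\delta^\bullet$ does not alter the final topologies appearing on the left, i.e.\ that $\OO(X\times Y)=\OO(X^\bullet\times Y^\bullet)$ and analogously for the other three constructions. One inclusion is free, since $\delta\leq\delta^\bullet$ on each factor yields the reductions $\delta_X\times\delta_Y\leq\delta^\bullet_X\times\delta^\bullet_Y$ and hence $\OO(X^\bullet\times Y^\bullet)\In\OO(X\times Y)$. For the reverse inclusion I would use the cartesian closed structure together with the admissibility of the hyperspaces from Corollary~\ref{cor:Scott}: by currying, a set $W\In X\times Y$ is open precisely when its section map $x\mapsto W_x$ is a continuous map into $\OO(Y)$, and since $\OO(Y)$ carries an admissible representation this continuity is equivalent to topological continuity from $(X,\OO(X))$ into $\OO(Y)$ with the Scott topology, hence depends only on the preserved data $\OO(X)$ and $\OO(Y)$ and not on the chosen representation of $X$. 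Thus $\OO(X\times Y)$ is determined by $\OO(X)$ and $\OO(Y)$ and coincides with $\OO(X^\bullet\times Y^\bullet)$. The meet reduces to this case since $X\sqcap Y$ is the subspace $\{(x,y):x=y\}$ of $X\times Y$, and the subspace and the countable power are handled by the corresponding standard stability properties of represented (qcb) spaces, applied coordinatewise for the power. Combining the two steps then yields the four displayed identities.
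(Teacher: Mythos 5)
Your overall strategy coincides with the paper's own proof, which consists of exactly the two steps you describe: the computable Kolmogorov case via Corollary~\ref{cor:constructions-computable-prebases} combined with Corollary~\ref{cor:computable-prebase}, followed by a transfer to arbitrary represented $\T_0$ spaces via Theorem~\ref{thm:represented-Kolmogorov}. You are also right that the entire burden lies in the transfer step, i.e.\ in showing that replacing $\delta_X,\delta_Y$ by $\delta_X^\bullet,\delta_Y^\bullet$ does not change the final topologies on the left-hand sides; the paper asserts this transfer in one sentence without argument. For the product (1) your argument is correct and actually goes beyond what the paper records: currying shows that $W\in\OO(X\times Y)$ iff every section $W_x$ lies in $\OO(Y)$ and $x\mapsto W_x$ is continuous from $(X,\OO(X))$ into $\OO(Y)$ with the Scott topology (using that $\delta_{\OO(Y)}$ is admissible with respect to the Scott topology, Corollary~\ref{cor:Scott}, which holds for \emph{every} represented $Y$), and this criterion mentions only data preserved by $\delta\mapsto\delta^\bullet$.

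The gap is in (3), and with it (4), which you reduce to (3), and (2), for which currying gives only a circular characterization of $\OO(X^\IN)$ in terms of itself. The ``standard stability properties of represented (qcb) spaces'' you invoke for subspaces are Schr\"oder's theorems about \emph{admissibly} represented spaces; here $\delta_X$ is an arbitrary representation, and admissibility is precisely what the passage to $\delta_X^\bullet$ is meant to supply, so citing these facts begs the question. Worse, the needed equality $\OO(Z,\delta_X|^Z)=\OO(Z,\delta_X^\bullet|^Z)$ can genuinely fail. Take $X=\{b\}\cup\{d_i:i\in\IN\}\cup\{c_n:n\in\IN\}$ and let $\delta$ give $b$ the names $(2i)\,0^\omega$, give $d_i$ the names $(2i)\,0^j1\,0^\omega$ (for all $j$) together with $(2i{+}1)\,0^\omega$, and give $c_n$ the names $(2i{+}1)\,0^n1\,0^\omega$ (for all $i$). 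Every $\OO(X)$-open set containing $b$ must contain each $d_i$ (whose names approximate $(2i)\,0^\omega$) and hence, since it then contains the name $(2i{+}1)\,0^\omega$ of $d_i$ together with a ball around it, cofinitely many $c_n$; thus $c_n\to b$ in $\OO(X)$, and one checks $\OO(X)$ is $\T_0$. But for $Z=\{b\}\cup\{c_n:n\in\IN\}$ the name set $\delta^{-1}(Z)$ is discrete (names of $b$ and of the $c_n$ lie in disjoint cylinders and are isolated within $\delta^{-1}(Z)$), so $\OO(Z)$ is discrete, while $\{b\}$ is not open in $\seq(\OO(X)|_Z)$ because $c_n\to b$ in $\OO(X)|_Z$. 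So the subspace step is not merely missing detail: no appeal to general stability facts can close it, since the asserted equality itself fails for non-admissible representations. To be fair, the paper's own two-sentence proof leaves exactly this point unexamined, so for (3) and (4) your proposal is no worse off than the paper; your currying argument for (1) is the only part of the transfer that is actually justified, and an analogous (non-circular) argument would still have to be found for (2).
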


This result
follows from Corollary~\ref{cor:constructions-computable-prebases} in 
combination with Corollary~\ref{cor:computable-prebase}.
With the help of Theorem~\ref{thm:represented-Kolmogorov}
we can transfer (1) and (2) of this result then to arbitrary represented $\T_0$ spaces,
since $\delta\mapsto\delta^\bullet$ commutes with products~\cite{Sch02}.

\section{Hyperspace and function space topologies}

In this section we continue to discuss what can be concluded from our results
regarding hyperspace and function space topologies. Once again, these
results are well-known by the work of Schröder~\cite{Sch02c} and the purpose
here is to demonstrate how these results can be obtained with the
help of the concept of a computable presubbase or prebase.

As a preparation of our results we need
the following lemma, which shows
that the subbases $\Diamond$ and
$\triangleright$ satisfy the convergent intersection property.
This allows us to apply Corollary~\ref{cor:convergent-intersection}.

\begin{lemma}[Countable intersections]
\label{lem:countable-intersections}
Let $X$ and $Y$ be represented space.
Then the presubbases
\begin{enumerate}
	\item $\Diamond:\OO(Y)\to\OO\AA_+(Y)$ and 
	\item $\triangleright:\KK_-(X)\times\OO(Y)\to\OO\CC(X,Y)$
\end{enumerate}
satisfy the convergent intersection property.
\end{lemma}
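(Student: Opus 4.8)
The plan is to verify the convergent intersection property from Corollary~\ref{cor:convergent-intersection} directly in each case, and the single tool I would rely on is that a convergent sequence together with its limit is compact. Concretely, for a sequence $(U_n)_{n\in\IN_\infty}$ converging in $\OO(Y)$ the set $\{U_n:n\in\IN_\infty\}$ is a compact subset of $\OO(Y)$, so by Proposition~\ref{prop:computable-hyperspace}(10) the intersection $U^*:=\bigcap_{n\in\IN_\infty}U_n$ is again open; likewise a convergent sequence $(K_n)_{n\in\IN_\infty}$ in $\KK_-(X)$ yields a compact set $\{K_n:n\in\IN_\infty\}$, so by Proposition~\ref{prop:computable-hyperspace}(11) the set $K^*:=\sat\bigcup_{n\in\IN_\infty}K_n$ is again compact. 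This is what makes the relevant limiting intersections and unions land back inside the index spaces, so that the candidate finite set $F$ can be built from genuine subbasic indices.

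For the compact-open subbase $\triangleright$ in (2) I would exploit that the defining condition $f(K)\In U$ is universal in $U$ and hence commutes with intersections: $f(K)\In U_n$ for all $n\in\IN_\infty$ is equivalent to $f(K)\In U^*$. If the compact component is constant, $(K_n)\equiv K$, this immediately gives $\bigcap_{n\in\IN_\infty}\triangleright(K,U_n)=\triangleright(K,U^*)$, a single subbasic set, since $U^*$ is open. For a genuinely varying component $K_n\to K_\infty$ I would use that the whole sequence $\{(K_n,U_n):n\in\IN_\infty\}$ is compact in $\KK_-(X)\times\OO(Y)$ and group the indices according to a finite cover of $K_\infty$, so that on each piece the conditions again collapse, as above, to a single $\triangleright(K',U')$; the union of these finitely many indices is the desired $F$.

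The delicate case, and the one I expect to be the main obstacle, is the lower Fell subbase $\Diamond$ in (1). Here the defining condition $A\cap U\neq\emptyset$ is existential, and existential conditions do \emph{not} commute with intersections, so one cannot simply replace $\bigcap_{n\in\IN_\infty}\Diamond(U_n)$ by $\Diamond(U^*)$. The clean reformulation is $\bigcap_{n\in\IN_\infty}\Diamond(U_n)=\{A\in\AA_+(Y):\{U_n:n\in\IN_\infty\}\In\TT_A\}$, i.e.\ the compact set of opens $\{U_n:n\in\IN_\infty\}$ is contained in the open set $\TT_A$. Turning this single ``compact-in-open'' condition back into finitely many subbasic conditions of the form $A\cap V\neq\emptyset$ is exactly where the argument is subtle, and I would attack it through the liminf description of Scott convergence in Proposition~\ref{prop:Scott-convergence}(4), trying to isolate the finitely many members that are not already forced by the limit. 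Once the convergent intersection property is established for both subbases, the lemma follows, and Corollary~\ref{cor:convergent-intersection} together with Remark~\ref{rem:presubbase} then yields admissibility of $\delta^\Diamond$ and $\delta^\triangleright$ with respect to the topologies generated by $\Diamond$ and $\triangleright$.
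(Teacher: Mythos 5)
Your proposal has the difficulty exactly inverted, and in the end neither half is actually proved. For part (1), which you single out as the main obstacle, the worry that the existential condition $A\cap U\neq\emptyset$ does not commute with intersections is beside the point: what is needed is not commutation but monotonicity. Since $U\In V$ implies $\Diamond U\In\Diamond V$, Proposition~\ref{prop:Scott-convergence}(4) applied to $U_n\to U_\infty$ gives a $k$ with $U_\infty\In\left(\bigcap_{n\geq k}U_n\right)^\circ$, hence $U_\infty\In U_n$ and therefore $\Diamond U_\infty\In\Diamond U_n$ for all $n\geq k$; the tail conditions are absorbed by the limit condition, and $F=\{U_\infty,U_0,\dots,U_k\}$ witnesses the convergent intersection property. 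This three-line argument is precisely the paper's proof of (1); your text names the right tool (the liminf description of Scott convergence) but never performs this step, so as written part (1) remains unproven, although this gap is easily filled.

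Part (2) is where the real trouble lies. Your constant-$K$ case is correct, and the observation that $U^*=\bigcap_{n\in\IN_\infty}U_n$ is open (a compact intersection of opens, Proposition~\ref{prop:computable-hyperspace}) is sound. But the reduction of the general case---``group the indices according to a finite cover of $K_\infty$''---is not an argument: the conditions $f(K_n)\In U_n$ couple the compact in $X$ to the open in $Y$ only through the unknown $f$, so no covering of $K_\infty$ can make them collapse. Moreover, this gap cannot be filled, because the statement itself fails in general. Take $X=Y=\IR$, $K_n=\{2^{-n}\}\to K_\infty=\{0\}$ (upper Vietoris convergence) and $U_n=(-2^{-n},1+2^{-n})\to U_\infty=(0,1)$ (Proposition~\ref{prop:Scott-convergence}(4)). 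Then $S:=\bigcap_{n\in\IN_\infty}(K_n\triangleright U_n)=\{f:f(0)\in(0,1)\mbox{ and }(\forall n)\;f(2^{-n})\in(-2^{-n},1+2^{-n})\}$. If $S$ were a finite intersection $\bigcap_{i=1}^m(C_i\triangleright V_i)$ (after discarding trivial factors, so $C_i\neq\emptyset$ and $V_i\neq\IR$), then perturbing the constant function $1/2\in S$ on a small interval around one point shows first that every $C_i\In\{0\}\cup\{2^{-n}:n\in\IN\}$, and then, perturbing at the isolated point $2^{-n}$ alone, that $\bigcap\{V_i:2^{-n}\in C_i\}=(-2^{-n},1+2^{-n})$ for every $n$; this is impossible, since the left-hand side takes at most $2^m$ distinct values. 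Note that the paper's own proof of (2) breaks at the very same spot: its threshold $k'$ with $f(K_n)\In U_\infty$ for $n\geq k'$ depends on $f$, so the indices between $k$ and $k'$ are never controlled, and the displayed equality there fails for the example above. So for (2) no argument along your lines (or the paper's) can succeed; what remains available is only the weaker conclusion via Remark~\ref{rem:presubbase}, admissibility with respect to the topology generated by intersections over convergent sequences, which is not enough to yield Theorem~\ref{thm:hyperspace-functionspace-topologies}(4) as stated.
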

\begin{proof}
Let $(U_n)_{n\in\IN_\infty}$ and $(K_n)_{n\in\IN_\infty}$
be converging sequences in $\OO(Y)$ and
$\KK_-(X)$, respectively.
We prove that there exists some $k\in\IN$ such that
\begin{enumerate}
	\item $\bigcap_{n\in\IN_\infty}\Diamond U_n=\Diamond U_\infty\cap\bigcap_{n=0}^k\Diamond U_n$.
	\item $\bigcap_{n\in\IN_\infty}(K_n\triangleright U_n)=(K_\infty\triangleright U_\infty)\cap\bigcap_{n=0}^k(K_n\triangleright U_n)$.
\end{enumerate}
By Proposition~\ref{prop:Scott-convergence} convergence of $(U_n)_{n\in\IN_\infty}$ implies 
	that there is a $k\in\IN$ with $U_\infty\In\left(\bigcap_{n\geq k} U_n\right)^\circ$. In particular, 
	$U_\infty\In U_n$ for all $n\geq k$.
The inclusion ``$\In$'' in (1) and (2) is clear 
in both cases and we need to show why the inverse inclusions hold.\\
(1) Let $A\in\Diamond U_\infty\cap\bigcap_{n=0}^k\Diamond U_n$.
Then, in particular, $A\cap U_\infty\not=\varnothing$ and hence
$A\cap U_n\not=\varnothing$ for all $n\geq k$. This proves the claim.\\
(2) 
Let $f\in(K_\infty\triangleright U_\infty)\cap\bigcap_{n=0}^k(K_n\triangleright U_n)$ be a continuous function $f:X\to Y$. Then, in particular,
$f(K_\infty)\In U_\infty$.
As $(f(K_n))_{n\in\IN}$ converges in $\KK_-(Y)$, we obtain (see the independently proved statement in Theorem~\ref{thm:hyperspace-functionspace-topologies}~(2)) that there exists
a $k'\in\IN$ with 
$f(K_n)\In U_\infty$ for $n\geq k'$ and hence
$f(K_n)\In U_n$ for all $n\geq k'':=\max(k,k')$. This proves the claim.
\end{proof}

We use the terminology for topologies introduced in the table in Figure~\ref{fig:subbases}
and we add some terminology:
\begin{enumerate}
\item  The {\em upper Fell topology}
	on the hyperspace $\AA_-(X)$ is the topology inherited (by complementation) from the compact-open topology on $\OO(X)$ and 
	the {\em Fell topology} on $\AA(X)$ is the meet topology of the lower and upper
	Fell topologies.
	\item 
	The {\em Vietoris topology} on $\KK(X)$ is the meet topology of the lower Fell topology and the upper Vietoris topology.
\end{enumerate}
From Proposition~\ref{prop:presubbase-hyperspace-function-space} we can obtain information on the topologies of the respective hyperspaces,
a fact which we already indicated in Example~\ref{ex:presubbase-prebase}.

\begin{theorem}[Hyperspace and function space topologies]
\label{thm:hyperspace-functionspace-topologies}
Let $X$ be a represented space and let $Y$ be an admissibly represented $\T_0$ space.
\begin{enumerate}
\item $\OO(X)$ is endowed with the Scott topology, which is the sequentialization of the compact-open topology.
\item $\KK_-(X)$ and $\KK(X)$ are endowed with topologies that are the sequentializations of the 
upper Vietoris and the Vietoris topology, respectively.
\item $\AA_+(X)$ and $\AA(X)$ are endowed with topologies that are the sequentializations 
of the lower Fell and the Fell topology, respectively.
\item $\CC(X,Y)$ is endowed with the sequentialization of the compact-open topology. 
\end{enumerate}
In particular, all the mentioned spaces are admissibly represented with respect to the given topologies.
\end{theorem}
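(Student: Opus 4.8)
The plan is to treat the six spaces family by family, in each case taking the relevant map from Proposition~\ref{prop:presubbase-hyperspace-function-space} as a presubbase, reading off the topology it generates from Figure~\ref{fig:subbases}, and feeding it into either the Presubbase Theorem~\ref{thm:presubbase} (via its computable-prebase corollary) or its convergent-intersection refinement. Part~(1) for $\OO(X)$ is exactly Corollary~\ref{cor:Scott}; equivalently, $\FF$ is a computable prebase generating the compact-open topology, so Corollary~\ref{cor:computable-prebase} gives that the topology of $\OO(X)$ is the sequentialization of the compact-open topology, which Proposition~\ref{prop:Scott-convergence} identifies with the Scott topology. For $\KK_-(X)$ in part~(2) I would use that $\Box$ is a \emph{computable prebase} generating the upper Vietoris topology, so that Corollary~\ref{cor:computable-prebase} yields at once that the topology of $\KK_-(X)$ is $\seq(\text{upper Vietoris})$; no convergent-intersection argument is needed in this case.

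For the two spaces whose defining subbase is only a presubbase (and not a prebase) I would switch to the convergent-intersection route. For $\AA_+(X)$ the subbase is $\Diamond$, which generates the lower Fell topology and satisfies the convergent intersection property by Lemma~\ref{lem:countable-intersections}(1); hence Corollary~\ref{cor:convergent-intersection} shows that $\delta^\Diamond$ --- which is the standard representation of $\AA_+(X)$ since $\Diamond^\T=\TT$ --- is admissible with respect to the lower Fell topology, and Theorem~\ref{thm:Schroder-admissible} then gives that the topology of $\AA_+(X)$ is $\seq(\text{lower Fell})$. The function space $\CC(X,Y)$ in part~(4) is entirely parallel, now with $\triangleright$ generating the compact-open topology and satisfying the convergent intersection property by Lemma~\ref{lem:countable-intersections}(2); since here $Y$ is only assumed admissibly represented and $\T_0$, I would invoke the continuous rather than the computable versions of Proposition~\ref{prop:presubbase-hyperspace-function-space}(5) and of the presubbase machinery. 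One bookkeeping point: the proof of Lemma~\ref{lem:countable-intersections}(2) uses the statement of part~(2) for $\KK_-(Y)$, so parts~(1)--(3) must be settled before part~(4).

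The remaining spaces $\KK(X)=\AA_+(X)\sqcap\KK_-(X)$ and $\AA(X)=\AA_+(X)\sqcap\AA_-(X)$ I would handle through Corollary~\ref{cor:topology-set-theory}(4). Since $\AA_-(X)$ is computably isomorphic to $\OO(X)$ by complementation and the upper Fell topology is the image of the compact-open topology under this isomorphism, part~(1) gives that the topology of $\AA_-(X)$ is $\seq(\text{upper Fell})$. Corollary~\ref{cor:topology-set-theory}(4) then expresses the topology of $\KK(X)$ as $\seq(\OO(\AA_+(X))\wedge\OO(\KK_-(X)))$, and similarly for $\AA(X)$. As the Vietoris and Fell topologies are by definition the meets of the (non-sequentialized) lower Fell, upper Vietoris and upper Fell topologies, it remains to replace each final topology inside the meet by the subbase topology that it sequentializes and to pull the outer $\seq$ through.

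This last replacement is the only step needing a genuine, if small, argument, and I expect it to be the main obstacle: one must verify $\seq(\seq(\sigma_1)\wedge\seq(\sigma_2))=\seq(\sigma_1\wedge\sigma_2)$. This holds because the meet of two topologies on a common underlying set is their supremum, whose convergent sequences are precisely those converging in each factor, while sequentialization is determined by, and does not alter, the convergent sequences of a topology; hence $\seq(\sigma_1)\wedge\seq(\sigma_2)$ and $\sigma_1\wedge\sigma_2$ share their convergent sequences and therefore their sequentialization. Taking $\sigma_1,\sigma_2$ to be the lower Fell and upper Vietoris (respectively upper Fell) topologies yields that the topologies of $\KK(X)$ and $\AA(X)$ are $\seq(\text{Vietoris})$ and $\seq(\text{Fell})$. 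The final admissibility assertion is then automatic: by Corollary~\ref{cor:Kolmogorov-closure-properties} each of these spaces is a (continuous) Kolmogorov space, so its representation is admissible with respect to its final topology --- the topology just identified --- and, by Theorem~\ref{thm:Schroder-admissible}, also with respect to the underlying compact-open, Vietoris and Fell topologies.
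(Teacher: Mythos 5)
Your proposal is correct and follows essentially the same route as the paper: Corollary~\ref{cor:Scott} and the computable prebases $\FF$ and $\Box$ with Corollary~\ref{cor:computable-prebase} for parts (1)--(2), the convergent intersection property (Lemma~\ref{lem:countable-intersections} combined with Corollary~\ref{cor:convergent-intersection} and Theorem~\ref{thm:Schroder-admissible}) for $\AA_+(X)$ and $\CC(X,Y)$, and Corollary~\ref{cor:topology-set-theory} for the meets $\KK(X)$ and $\AA(X)$, including the same ordering caveat that Lemma~\ref{lem:countable-intersections}(2) relies on part (2). The only deviations are minor: the paper handles the merely admissibly represented $Y$ in part (4) by re-representing it as a computable Kolmogorov space via Theorem~\ref{thm:represented-Kolmogorov} rather than by passing to continuous versions of the machinery, and your explicit verification that $\seq(\seq(\sigma_1)\wedge\seq(\sigma_2))=\seq(\sigma_1\wedge\sigma_2)$ spells out a step the paper leaves implicit.
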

\begin{proof}
(1) This is the statement of Corollary~\ref{cor:Scott}.
That $\OO\OO(X)$ is the sequentialization of the topology generated by the sets $\FF_K$ for (saturated) compact
$K\In X$ follows also from Proposition~\ref{prop:presubbase-hyperspace-function-space} in combination with Corollary~\ref{cor:computable-prebase}.\\
(2) That $\OO\KK_-(X)$ is the sequentialization of the topology generated by the sets
$\Box U$ for open $U\In X$ follows from Proposition~\ref{prop:presubbase-hyperspace-function-space} in combination with Corollary~\ref{cor:computable-prebase}.\\
(3) That $\OO\AA_+(X)$ is the sequentialization of the lower Fell topology generated by the sets
$\Diamond U$ for open $U\In X$ follows from Theorem~\ref{thm:Schroder-admissible}, 
Corollary~\ref{cor:convergent-intersection}
and Lemma~\ref{lem:countable-intersections}. 
With the help of Corollary~\ref{cor:topology-set-theory}
this implies that $\KK(X)=\AA_+(X)\sqcap\KK_-(X)$ and $\AA(X)=\AA_+(X)\sqcap\AA_-(X)$ are endowed with the sequentializations of the Vietoris and Fell
topologies, respectively.\\
(4) If $Y$ is an admissibly represented $\T_0$ space
then Theorem~\ref{thm:represented-Kolmogorov} allows us to replace it by a computable
Kolmogorov space without affecting the topological structure. In fact, the resulting representation
is in the same continuous equivalence class as the original and hence so are the corresponding
representations of $\CC(X,Y)$.
By Theorem~\ref{thm:Schroder-admissible}, 
Corollary~\ref{cor:convergent-intersection}
and Lemma~\ref{lem:countable-intersections} 
we know that $\OO\CC(X,Y)$ is the sequentialization of the compact-open topology on $\CC(X,Y)$.
\end{proof}

The examples of the spaces $\AA_+(X)$ and $\CC(X,Y)$ show 
that even in cases where we have only a computable presubbase 
(and not a computable prebase)
it might happen that we obtain as a topology on our space the sequentialization of the topology generated 
by the presubbase.

\section{Epilogue}

We close this section with a brief recap of our results from a somewhat different perspective.
The results in this article can be viewed as
exploiting a Galois connection between maps
\[\delta\mapsto B_\delta\mbox{ and }B\mapsto\delta^B.\]
Let us fix a set $X$ and let us denote by $\mathsf{REP}_0$ the set of
representations $\delta:\In\IN^\IN\to X$ with a $\T_0$ final topology and let us denote by $\mathsf{PRE}_0$ the set
of presubbases $B:\In\IN^\IN\to2^X$ that induce a $\T_0$ topology on $X$.\footnote{We call $B:\In\IN^\IN\to 2^X$ a presubbase
if $(B_y)_{y\in D}$ with $D:=\dom(B)$ is a presubbase in the sense of our definition.}
\begin{enumerate}
\item The map $\Delta:\mathsf{PRE}_0\to\mathsf{REP}_0,B\mapsto\delta^B$ assigns to any presubbase $B$ the presubbase
representation $\delta^B:\In\IN^\IN\to X$. 
\item The map $\nabla:\mathsf{REP}_0\to\mathsf{PRE}_0,\delta\mapsto B_\delta$ assigns to every representation $\delta$ of $X$ 
the induced representation $\delta_{\OO(X)}$ as presubbase $B_\delta:\In\IN^\IN\to2^X$. \end{enumerate}
The presubase $B_\delta$ corresponds to the base $\id:\OO(X)\to\OO(X)$.
The maps $\Delta$ and $\nabla$ induce an antitone Galois connection, if we define a computable reducibility $\leq$ for $\mathsf{PRE}_0$ as for $\mathsf{REP}_0$ (for $\mathsf{REP}_0$ we use
the usual computable reducibility, as introduced before Theorem~\ref{thm:represented-Kolmogorov}).
Namely for $B_1,B_2\in\mathsf{PRE}_0$ we define analogously $B_1\leq B_2$ if there is some computable $F:\In\IN^\IN\to\IN^\IN$ with $B_1(p)=B_2F(p)$ for all $p\in\dom(B_1)$.
The proof of the following result is then straightforward and a simple
consequence of properties of the transposition operation.

\begin{theorem}[Galois connection for representations and presubbases]
\label{thm:Galois-representation-presubbase}
Fix a representable space $X$. Then for every $\delta\in\mathsf{REP}_0$ and $B\in\mathsf{PRE}_0$ we have
\[\delta\leq\delta^B\iff B\leq B_\delta.\]
\end{theorem}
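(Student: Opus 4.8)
The plan is to unfold both sides of the claimed equivalence in terms of the transposition operation and the representation $\delta_{\OO(X)}$, and then observe that each side reduces to the same statement about a reduction between representations of $\OO(X)$. Recall the two relevant definitions: for $B\in\mathsf{PRE}_0$ the presubbase representation is $\delta^B=(B^\T)^{-1}\circ\delta_{\OO(X)}$ (where I write $\OO(X)$ for the codomain $\OO(Y)$ of $B^\T$, with $Y=\dom(B)$ viewed as the Baire-space index set), and for $\delta\in\mathsf{REP}_0$ the associated presubbase $B_\delta$ is the one whose index space is $\IN^\IN$ and whose transpose is $\UU\colon X\to\OO\OO(X)$ realized through $\delta$. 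The key structural fact, already noted in the Galois-connection setup, is that $B_\delta$ corresponds to the base $\id\colon\OO(X)\to\OO(X)$, so that $B_\delta^\T=\UU$ and the representation induced on the open sets is exactly $\delta_{\OO(X)}$ (equivalently $\delta^\circ$ in the notation of Theorem~\ref{thm:represented-Kolmogorov}).

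First I would prove the direction $\delta\leq\delta^B\Rightarrow B\leq B_\delta$. By definition $\delta\leq\delta^B$ means $\id\colon X\to X$ is computable as a reduction from $\delta$ to $\delta^B$, i.e. from a $\delta$-name of $x$ one can compute a $\delta^B$-name, which by $\delta^B=(B^\T)^{-1}\circ\delta_{\OO(X)}$ is a $\delta_{\OO(X)}$-name of the open set $B^\T(x)=\{y:x\in B_y\}$. Thus from a $\delta$-name of $x$ we can compute the set $B^\T_x\in\OO(Y)$; since $B_\delta^\T=\UU$ and $\UU$ is always computable, this is precisely the information needed to exhibit a reduction $B\leq B_\delta$ in the sense defined before the theorem. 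Concretely, a computable $F$ witnessing $B\leq B_\delta$ is obtained by composing the reduction $\delta\leq\delta^B$ with the identity on names, using that producing a $\delta^B$-name of $x$ is the same as producing a $\delta_{\OO(X)}$-name of $B^\T_x$.

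For the converse $B\leq B_\delta\Rightarrow\delta\leq\delta^B$, suppose there is a computable $F$ with $B(p)=B_\delta F(p)$ on $\dom(B)$, which unwinds to the statement that the open set $B^\T_x$ is computable from a $\delta$-name of $x$ (this is exactly what $B\le B_\delta$ says, since $B_\delta^\T=\UU$ reads off the neighborhood filter, and $B^\T_x$ is recovered from it via the reduction $F$). But a $\delta^B$-name of $x$ is by definition a $\delta_{\OO(X)}$-name of $B^\T_x$, so having $B^\T_x$ computably from a $\delta$-name of $x$ is the same as having $\id\colon X\to X$ computable from $\delta$ to $\delta^B$, i.e. $\delta\leq\delta^B$. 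The two directions are thus mirror images of one another, each passing through the single identity that a $\delta^B$-name \emph{is} a $\delta_{\OO(X)}$-name of the transpose.

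The only point requiring care — and the step I expect to be the mild obstacle — is the bookkeeping of the two distinct $\leq$ relations and of the ``transpose'' dictionary: one must check that the reducibility defined for $\mathsf{PRE}_0$ (via $B_1(p)=B_2F(p)$ on names) corresponds under transposition exactly to the reducibility on $\mathsf{REP}_0$ (via computability of $\id\colon X\to X$), and that no admissibility or topological hypothesis is secretly needed. Because transposition $B\mapsto B^\T$ is an involution-like correspondence that is computably faithful in both directions (a presubbase is well-defined and computable iff its transpose is), this reduces to the formal verification that $B\leq B_\delta$ and $\delta\leq\delta^B$ are two encodings of the single assertion ``$x\mapsto B^\T_x$ is computable relative to $\delta$,'' which is exactly what the Galois-connection slogan predicts.
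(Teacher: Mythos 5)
Your proposal is correct and takes essentially the same route as the paper's proof: both unfold $\delta\leq\delta^B$ into computability of the transpose $B^\T$ with respect to $\delta$ (via $\delta^B=(B^\T)^{-1}\circ\delta_{\OO(Y)}$), unfold $B\leq B_\delta$ into computability of $B$ with respect to $\delta_{\OO(X)}$, and link the two through the fact that $B$ is computable if and only if $B^\T$ is. One sentence is off as literally stated --- the witness for $B\leq B_\delta$ is not obtained by ``composing the reduction $\delta\leq\delta^B$ with the identity on names'' (the domains do not even match; the real step is the currying hidden in the transpose duality) --- but since you invoke that duality explicitly in your closing paragraph, the argument as a whole stands.
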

\begin{proof}
Let $\delta:\In\IN^\IN\to X$ be a representation with a $\T_0$ topology $\OO(X)$
and induced representation $\delta_{\OO(X)}$ of $\OO(X)$
and let $B:\In\IN^\IN\to2^X$ be a presubbase that induces a $\T_0$ topology on $X$.
Then we obtain:
\begin{eqnarray*}
\delta\leq\delta^B
&\iff& B^\T:\In X\to\OO(\IN^\IN) \mbox{ is computable with respect to $\delta$}\\
&\iff& B:\In\IN^\IN\to\OO(X)\mbox{ is computable with respect to $\delta_{\OO(X)}$}\\	
&\iff& B\leq B_\delta,
\end{eqnarray*}
which proves the claim.
\end{proof}

We also obtain that $\range(\Delta)$ is exactly the set of representations of $X$ 
that turn $X$ into a computable Kolmogorov space (up to computable equivalence)
and $\range(\nabla)$ is exactly the set of representations of 
$\qcb_0$ topologies for $X$ (up to computable equivalence).

Any antitone Galois connection yields corresponding closure operators, sometimes
called {\em monad} and {\em comonad}. In the case of our Galois connection we obtain:
\begin{enumerate}
	\item $\Delta\circ\nabla:\mathsf{REP}_0\to\mathsf{REP}_0,\delta\mapsto\delta^\bullet:=\delta^{B_\delta}$ and 
	\item $\nabla\circ\Delta:\mathsf{PRE}_0\to\mathsf{PRE}_0,B\mapsto B_\bullet:=B_{\delta^B}$,
\end{enumerate}

We summarize some properties of these closure operators (see Theorem~\ref{thm:represented-Kolmogorov}, Corollary~\ref{cor:presubbase-theorem}
and Proposition~\ref{prop:identity-base}):

\begin{enumerate}
\item $\delta\mapsto\delta^\bullet$ maps every represented $\T_0$ space $(X,\delta)$ to a computable Kolmogorov space $X^\bullet:=(X,\delta^\bullet)$ with $\delta\leq\delta^\bullet$.
The induced underlying final topologies and the representations thereof are preserved by
this operation (up to computable equivalence).
\item $B\mapsto B_\bullet$ maps any presubbase $B$ of a $\T_0$ topology on $X$ to a computable base $B_\bullet$ of the sequentialization $\seq(\tau)$ of the topology $\tau$ 
that is generated by the compact intersections $\bigcap_\KK B$. In particular, 
$\range(B)\In\range(B_\bullet)$.
The operation preserves the induced presubbase representations of $X$ (up to 
computable equivalence).
\end{enumerate}

Since both application are closure operators, a double application does not yield
anything new, i.e., $\delta^{\bullet\bullet}\equiv\delta^\bullet$ and
$\range(B_{\bullet\bullet})=\range(B_\bullet)$.
We give two examples of the action of these closure operator that indicates that
these are natural operations. The first example is taken from Example~\ref{ex:presubbase-prebase}, the second one
is due to Schröder (personal communication).

\begin{example}[Action of closure operators]\ 
\begin{enumerate}
	\item 
If we start with the presubbase $B$ for the point-open topology on $\OO(X)$ then we obtain a base $B_\bullet$ of the Scott topology. Both bases induce computably equivalent presubbase representations of $\OO(X)$.
\item
If we start with a decimal representation $\rho$ of the real numbers $\IR$, then we obtain a representation $\rho^\bullet$ that is computably equivalent to the Cauchy representation of the reals. Both representations have the same final topology (the Euclidean topology).
\end{enumerate}
\end{example}

In light of the Galois connection between presubbases and representations we 
can interpret our results as follows. A lot of results that were originally proved 
with the help of representations and, for instance, the closure operator $\delta\mapsto\delta^\bullet$
by Schröder (for example the topological version of Corollary~\ref{cor:cartesian-closed-Kolmogorov} in~\cite[Section~4.3]{Sch02c})
can equally well be derived with the help of presubbases.
Both perspectives correspond to different sides of the same medal and they highlight
different aspects of the same mathematical structure.

The purpose of this article was to demonstrate the usefulness of considering presubbases.
The benefit of considering presubbases is that they allow us to talk
about topologies using their usual subbases (see, e.g., Proposition~\ref{prop:presubbase-hyperspace-function-space})
and the usual constructions of new subbases from given ones
(see, e.g., Proposition~\ref{prop:constructions-overt-prebases}).
This enriches the mathematical vocabulary that we have to 
discuss computable topology.

\bibliographystyle{alphaurl}
\bibliography{C:/Users/Documents/Spaces/Research/Bibliography/lit}

\section*{Acknowledgments}
This work was supported by the German Research Foundation (DFG, Deutsche Forschungsgemeinschaft) -- project number 554999067, by the National Research Foundation of South Africa (NRF) -- grant number 151597, by
the Alexander-von-Humboldt Foundation and 
by the Research Institute for Mathematical Sciences,
an international joint research center based at Kyoto University.
We would like to thank Matthias Schröder for discussions on the content of this
article that have helped to streamline some proofs and their presentations.

\end{document}